\documentclass[12pt]{amsart}
\usepackage[colorlinks=true,citecolor=blue,linkcolor=blue,urlcolor=black]{hyperref}
\usepackage{latexsym,amsmath,amssymb}
\usepackage{amscd}
\usepackage{multimedia}
\usepackage{mathtools} 
\usepackage{xparse} 
\usepackage{accents}
\usepackage{color}
\usepackage{a4wide}
\usepackage{soul} 
\usepackage{multicol}
\usepackage[colorinlistoftodos,prependcaption,textsize=tiny]{todonotes}

\title[Boundary regularity]{Boundary regularity for minimizing biharmonic maps}
\author{Katarzyna Mazowiecka}

\address[Katarzyna Mazowiecka]{
Institute of Mathematics,
University of Warsaw,
Banacha 2,
02-097 Warszawa, Poland
\newline
\&
Universit\'e catholique de Louvain, 
Institut de Recherche en Math\'ematique et Physique, 
Chemin du Cyclotron 2 bte L7.01.01, 1348 Louvain-la-Neuve, Belgium}
\email{katarzyna.mazowiecka@uclouvain.be}

\setlength\parindent{0pt}

\belowdisplayskip=18pt plus 6pt minus 12pt \abovedisplayskip=18pt
plus 6pt minus 12pt
\parskip 8pt plus 1pt


\newcommand{\R}{\mathbb{R}}  
    
\newcommand{\Z}{\mathbb{Z}}
\newcommand{\N}{\mathbb{N}} 
\renewcommand{\S}{\mathbb{S}}

\newcommand{\n}{\mathcal{N}}

\DeclareMathOperator{\dist}{dist}

\def\w{\widetilde}
\def\wu{\widetilde u}
\newcommand{\ve}{\varepsilon}
\newcommand{\eps}{\varepsilon}
\newcommand{\vp}{\varphi}
\newcommand{\h}{\mathcal{H}}

\newcommand{\brac}[1]{\left({#1}\right)}

\newcommand{\norm}[1]{\left\|{#1}\right\|}

\newtheorem{theorem}{Theorem}[section]

\newtheorem{lemma}[theorem]{Lemma}
\newtheorem{corollary}[theorem]{Corollary}

\theoremstyle{definition}
\newtheorem{remark}[theorem]{Remark}
\newtheorem{definition}[theorem]{Definition}

\newtheorem{prob*}[theorem]{Problem}
\numberwithin{equation}{section}
\newcommand{\invisible}[1]{}

\newcommand{\mr}{%
  \,\raisebox{-.127ex}{\reflectbox{\rotatebox[origin=br]{-90}{$\lnot$}}}\,%
}

\def\XXint#1#2#3{{\setbox0=\hbox{$#1{#2#3}{\int}$ }
\vcenter{\hbox{$#2#3$ }}\kern-.6\wd0}}

\renewcommand{\dh}{\,d\mathcal{H}^{m-1}}
\newcommand{\dx}{\,dx}



\newcommand{\barint}{
\rule[.036in]{.12in}{.009in}\kern-.16in \displaystyle\int }

\newcommand{\barcal}{\mbox{$ \rule[.036in]{.11in}{.007in}\kern-.128in\int $}}


\def\mvint_#1{\mathchoice
          {\mathop{\vrule width 6pt height 3 pt depth -2.5pt
                  \kern -8pt \intop}\nolimits_{\kern -3pt #1}}%
          {\mathop{\vrule width 5pt height 3 pt depth -2.6pt
                  \kern -6pt \intop}\nolimits_{#1}}%
          {\mathop{\vrule width 5pt height 3 pt depth -2.6pt
                  \kern -6pt \intop}\nolimits_{#1}}%
          {\mathop{\vrule width 5pt height 3 pt depth -2.6pt
                  \kern -6pt \intop}\nolimits_{#1}}}


\numberwithin{theorem}{section} \numberwithin{equation}{section}

\begin{document}

\sloppy
\begin{abstract}
We prove full boundary regularity for minimizing biharmonic maps with smooth Dirichlet boundary conditions. Our result, similarly as in the case of harmonic maps, is based on the nonexistence of nonconstant boundary tangent maps. With the help of recently derivated boundary monotonicity formula for minimizing biharmonic maps by Altuntas we prove compactness at the boundary following Scheven's interior argument. Then we combine those results with the conditional partial boundary regularity result for stationary biharmonic maps by Gong--Lamm--Wang. 
\end{abstract}

\maketitle

\section{Introduction}
In this article we study the boundary regularity of \emph{minimizing} biharmonic maps. Let us first briefly discuss the known boundary regularity results in the case of second order problems: Boundary regularity for \emph{minimizing} harmonic maps with sufficiently smooth Dirichlet boundary conditions was proved by Schoen and Uhlenbeck \cite{SU2} and for \emph{minimizing} $p$-harmonic maps by Hardt and Lin \cite{HLp} (see also \cite{FuchsIII}). The boundary regularity results for \emph{minimizing} harmonic and $p$-harmonic maps crucially depend on the existence of a monotonicity formula at the boundary. Such a formula is obtained by reflecting a comparison map used in the proof of a monotonicity formula for minimizing maps, see \cite[Lemma 1.3]{SU2}. Full boundary regularity is a consequence of the absence of nonconstant boundary tangent maps for the class of minimizing maps.

There is also a conditional result for \emph{stationary} harmonic maps \cite{Wang-partial}, which under the assumption of a boundary monotonicity formula for stationary maps yields a partial regularity at the boundary. See also \cite{Scheven-variationally} for a boundary regularity result for another class of harmonic maps. The main reason for which no \emph{unconditional} partial boundary regularity result is known for \emph{stationary} harmonic maps is the lack of a boundary monotonicity formula. Here, we would also like to point out that a boundary monotonicity formula may be obtained for \emph{all} sufficiently smooth harmonic maps. According to \cite{Lin-blowup} such a formula was obtained first by W.Y.~Ding, see also \cite{Chen-Lin} and references therein.

Now, we introduce the setting. 

Let $\n$ be a smooth, compact Riemannian manifold without boundary of dimension $n$. By Nash's embedding theorem \cite{Nash}, we may assume that $\n$ is isometrically embedded in some Euclidean space $\R^\ell$ for $\ell$ sufficiently large. For a smooth, bounded domain $\Omega\subset\R^m$, $k\in \N$, and $1\le p\le\infty$ we define the Sobolev spaces
\[
 W^{k,p}(\Omega, \n) = \left\{ u\in W^{k,p}(\Omega, \R^\ell)\colon u(x)\in\n \text{ for a.e. } x\in\Omega\right\},
\]
equipped with the topology inherited from the topology of the linear Sobolev space $W^{k,p}(\Omega, \R^\ell)$.

We define the Hessian energy (or extrinsic biharmonic energy) for $u\in W^{2,2}(\Omega, \n)$ as
\begin{equation}\label{de:bil}
 H(u)=\int_\Omega |\Delta u|^2 \, dx,
\end{equation}
where $\Delta$ is the standard Laplace operator on $\R^m$. This energy depends on the embedding $\n\hookrightarrow\R^\ell$.

A map $u\in W^{2,2}(\Omega,\n)$ is said to be \emph{weakly biharmonic} if it is a critical point (with respect to the variations in the range) of the biharmonic energy, i.e., if it satisfies
\begin{equation}\label{eq:weaklybiharmonic}
 \frac{d}{dt}\bigg\rvert_{t=0}H(\pi_\n(u+t\zeta))=0,
\end{equation}
where $\zeta\in C^\infty_0(\Omega,\mathbb{R}^{\ell})$ and $\pi_\n:\mathcal O(\n)\rightarrow \n$ denotes the nearest point projection of a neighborhood $\mathcal O(\n)\subset\R^\ell$ of $\n$ onto $\n$. 

Geometrically, biharmonic maps are solutions $u\in W^{2,2}(\Omega,\n)$ to
\begin{equation}\label{eq:elgeneral1}
 \Delta^2 u \perp T_u\n.
\end{equation}
For a derivation of the Euler--Lagrange equation of biharmonic maps see \cite{Wang2004}.

We say that a map  $u~\in~W^{2,2}(\Omega, \n)$ is \emph{stationary biharmonic} if in addition to \eqref{eq:weaklybiharmonic} it is a critical point with respect to all variations of the domain, i.e., $u$ satisfies 
 \begin{equation}\label{eq:definitonstationaryharmonic}
  \frac{d}{dt}\bigg\rvert_{t=0}H(u(\cdot + t\xi(\cdot)))=0
 \end{equation}
whenever $\xi\in C_c^\infty(\Omega,\R^m)$.

In this paper we will be focused on a subclass of biharmonic maps, called \emph{minimizing biharmonic} maps, which are maps $u\in W^{2,2}(\Omega,\n)$ satisfying
\[
 H(u)\le H(v)
\]
for all $v\in W^{2,2}(\Omega,\n)$ such that $u-v\in W^{2,2}_0(\Omega,\R^\ell)$.

The study of regularity of biharmonic maps was initiated by Chang et al. in \cite{CWY}. They investigated mappings with values in the sphere $\S^{\ell-1}$. In case $m=4$ they proved the regularity of \emph{all} biharmonic maps, while for $m\ge5$ they proved that stationary biharmonic maps are $C^\infty$ except a closed set $\Sigma$ of Hausdorff dimension at most $m-4$. Their result was partially extended to general target manifolds by C.~Wang in \cite{Wangbiharmonicremarks,Wangbiharmonicintoriem,Wang2004}. Alternative proofs were given by Strzelecki \cite{Strzeleckibiharmonic} for $m=4$, $\n=\S^{\ell-1}$, Lamm with Rivi\`{e}re \cite{LR} for $m=4$ and arbitrary $\n$, and Struwe \cite{Struwe2008} for $m\ge 5$ and an arbitrary target manifold $\n$. 

In \cite{CWY} Chang, L.~Wang and Yang derived from the stationary assumption a monotonicity formula, although only for sufficiently regular maps. That formula was crucial in the proof of partial regularity for $m\ge5$. A rigorous proof of the monotonicity formula was given by Angelsberg in \cite{Ang}.

In the case of minimizing biharmonic maps the partial regularity results may be strengthened. First it was observed by Hong and C.~Wang in \cite{HW} that for $\n=\S^{\ell-1}$ the singular set $\Sigma$ has Hausdorff dimension at most $m-5$. One can prove the optimality of this result considering a map $\frac{x}{|x|}:B^5\rightarrow\S^4$ (see \cite[Proposition A1.]{HW}). Finally, Scheven in \cite{Scheven} reduced the dimension of singular set of minimizing mappings to an arbitrary target manifold $\n$. His result states that, as in the case $\n=\S^{\ell-1}$, the singular set $\Sigma$ of minimizing biharmonic maps has $\dim_\h \Sigma\le m-5$. 

In a recent paper Breiner and Lamm \cite{BL} prove that each minimizing biharmonic map is \emph{locally} in $W^{4,p}$ for $1\le p \le 5/4$. 

Let us mention here two inconclusive results in the direction of boundary regularity. Firstly, it was shown in \cite{LammWang} by Lamm and C.~Wang that polyharmonic maps, in the conformal case $m=2k$, enjoy the property of being continuous in a neighborhood of the boundary. The proof is strongly dependent on the relation $m=2k$ and one might not extend this method to the case $m>2k$. The other result concerns partial boundary regularity for stationary maps. It was shown in \cite{GLW} by Gong et al. that if we impose an additional condition on the boundary mapping then there exists a closed subset $\Sigma\subseteq\overline{\Omega}$, with $\mathcal{H}^{m-4} (\Sigma)=0$ such that the stationary biharmonic map is smooth up to the boundary, except possibly the set $\Sigma$. The additional condition is the boundary monotonicity formula. Unlike the interior monotonicity formula, the boundary monotonicity formula is an artificial assumption --- it is unknown whether it can be deduced for all stationary maps. The result \cite{GLW} is a biharmonic counterpart of a result by C.~Wang \cite{Wang-partial} for stationary harmonic maps. 

We are interested in the boundary regularity of minimizing biharmonic maps. We assume that $u$ satisfies the Dirichlet boundary condition. 
More precisely, let $\varphi\in C^\infty(\Omega_\delta,\n)$ be given for a $\delta>0$, where 
\[\Omega_\delta=\{x\in\overline{\Omega}:\dist(x,\partial \Omega)<\delta\}.\] Then $u$ satisfies
 \begin{equation}\label{eq:dirichletboundary}
  \brac{u,\frac{\partial u}{\partial \nu}}\Bigg\rvert_{\partial\Omega}=\brac{\vp,\frac{\partial \vp}{\partial \nu}}\Bigg\rvert_{\partial\Omega},
 \end{equation}
 where $\nu$ denotes the outer normal vector. 
 
Similarly as in the case of harmonic maps a boundary monotonicity formula may be proved for sufficiently smooth biharmonic maps. Gong, Lamm, and C.~Wang \cite{GLW} proved that \emph{all} biharmonic maps that are in $W^{4,2}$ satisfy a boundary monotonicity formula. Recently, Altuntas derived the boundary monotonicity formula for \emph{minimizing} biharmonic maps \cite{Serdar}.
 
{\bf Statement of result.}
We show that the conditional partial regularity result of Gong et al. can be strengthen to \emph{unconditional} full boundary regularity in the case of \emph{minimizing} biharmonic maps. 
 
 \begin{theorem}\label{thm:mainbih}
 Let $m\ge 5$, $\vp\in C^\infty(\Omega_\delta,\n)$ for some $\delta>0$, assume that $u\in W^{2,2}(\Omega,\n)$ is a minimizing biharmonic map, which satisfies the Dirichlet boundary conditions for a $\vp \in C^\infty(\Omega_\delta, \n)$ in the sense of \eqref{eq:dirichletboundary}. Then, $u$ is smooth on a full neighborhood of the boundary $\partial\Omega$.
 \end{theorem}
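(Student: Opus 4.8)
The plan is to follow the classical Schoen--Uhlenbeck strategy adapted to the fourth-order setting, combining three ingredients already available: (i) the boundary monotonicity formula for minimizing biharmonic maps due to Altuntas \cite{Serdar}; (ii) the conditional partial boundary regularity of Gong--Lamm--Wang \cite{GLW}, which upgrades from $\h^{m-4}(\Sigma)=0$ to smoothness away from $\Sigma$ once a boundary monotonicity formula is in force; and (iii) a boundary $\eps$-regularity / compactness statement proved by mimicking Scheven's interior argument \cite{Scheven}. The final step --- the one that actually yields \emph{full} boundary regularity rather than a small singular set --- is to show there are no nonconstant homogeneous boundary tangent maps, so that the density drops and $\Sigma\cap\p\Omega=\emptyset$.

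First I would flatten the boundary: using the smoothness of $\p\Omega$ and of $\vp$ near $\p\Omega$, choose boundary coordinates in which locally $\Omega$ becomes a half-ball $\BB^+_1$ and $\vp$ becomes (after composing with a diffeomorphism) a map agreeing with its prescribed first-order Dirichlet data on the flat part $T=\BB_1\cap\{x_m=0\}$; by subtracting $\vp$ and reflecting, one reduces to studying a minimizer on a ball with controlled data. Next I would record the consequences of Altuntas's boundary monotonicity formula: for a minimizing biharmonic $u$ and $x_0\in\p\Omega$, the rescaled energy $r^{4-m}\int_{B_r(x_0)}|\Delta u|^2 + (\text{lower-order curvature terms})$ is, up to an exponential correction factor, monotone in $r$, hence the boundary density $\Theta(u,x_0)=\lim_{r\to0} r^{4-m}\int_{B_r(x_0)\cap\Omega}|\Delta u|^2$ exists and is upper semicontinuous. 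This is exactly the hypothesis needed to invoke \cite{GLW}, giving a relatively closed singular set $\Sigma\subset\overline\Omega$ with $\h^{m-4}(\Sigma)=0$ and $u\in C^\infty(\overline\Omega\setminus\Sigma)$; in particular near any boundary point the singular set cannot separate, and it suffices to prove $\Theta(u,x_0)$ is small at \emph{every} boundary point to conclude regularity there.

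The compactness step follows Scheven's interior scheme transplanted to the boundary. One takes a sequence of minimizing biharmonic maps $u_k$ on $\BB^+_1$ with uniformly bounded Hessian energy and boundary data converging smoothly; after passing to a subsequence $u_k\rightharpoonup u$ weakly in $W^{2,2}$, the goals are (a) $u$ is again minimizing biharmonic with the limiting boundary data, (b) $u_k\to u$ strongly in $W^{2,2}_{loc}(\overline{\BB^+_1}\setminus\Sigma)$, and (c) the defect measures concentrate on a set of vanishing $\h^{m-4}$-measure, with no energy lost across $T$. The key tools are the boundary $\eps$-regularity lemma (if $r^{4-m}\int_{B_r(x_0)\cap\Omega}|\Delta u|^2<\eps_0$ then $u$ is smooth in $B_{r/2}(x_0)\cap\overline\Omega$), which one obtains by combining the interior biharmonic $\eps$-regularity of Chang--Wang--Yang / Wang / Scheven with the reflection across the flattened boundary and the Dirichlet condition \eqref{eq:dirichletboundary}, together with the uniform monotonicity from Altuntas to control the density along the sequence. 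I expect this transplantation to be essentially routine but technically the longest part, the main care being the fourth-order reflection: one must check that reflecting a map satisfying \eqref{eq:dirichletboundary} across $\{x_m=0\}$ produces a map with enough regularity (and a controlled right-hand side) that the interior estimates apply.

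The heart of the matter, and the expected main obstacle, is the \textbf{nonexistence of nonconstant boundary tangent maps}. Blowing up at a boundary point $x_0$ along $u_{x_0,r}(x)=u(x_0+rx)$ and using the boundary monotonicity formula together with the compactness step, one extracts a tangent map $\phi$ defined on a half-space $\R^m_+$, which is homogeneous of degree zero (the monotonicity formula forces $\Delta\phi$ homogeneous of degree $-2$, hence $\phi$ degree $0$), minimizing biharmonic, and has constant --- hence here, by the smoothness of $\vp$, \emph{prescribed constant} --- Dirichlet boundary data on $\p\R^m_+$. One then argues that such a $\phi$ must be constant: restricting to the boundary hyperplane the trace is constant, and a degree-zero minimizer on a half-space with constant first-order boundary data can be reflected to a degree-zero minimizing biharmonic map on all of $\R^m$ with a removable singularity structure; a dimension-reduction argument à la Federer, combined with Scheven's interior bound $\dim_\h\Sigma_{int}\le m-5$, forces the singular set of $\phi$ to be empty, and a homogeneous-degree-zero smooth map is constant. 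Propagating this back via upper semicontinuity of $\Theta$ gives $\Theta(u,x_0)=0<\eps_0$ at every $x_0\in\p\Omega$, so by boundary $\eps$-regularity $u$ is smooth in a full neighborhood of $\p\Omega$. The delicate point is justifying the reflection of the tangent map across the boundary at the level of \emph{minimality} (not merely stationarity), i.e.\ that the reflected map is energy-minimizing for the reflected functional --- this is where the precise form of the Dirichlet condition \eqref{eq:dirichletboundary} on both $u$ and $\p u/\p\nu$ is used, exactly as in \cite[Lemma~1.3]{SU2} for the harmonic case, now at second order.
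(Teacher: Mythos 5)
Your overall architecture matches the paper's: Altuntas's boundary monotonicity formula, Gong--Lamm--Wang's conditional partial boundary regularity, a Scheven-style boundary compactness theorem built on a reflection of $u-\vp$, and a blow-up to homogeneous degree-zero boundary tangent maps. But the crucial final step --- ruling out nonconstant boundary tangent maps --- is where your proposal has a genuine gap, and the paper does something fundamentally different there.

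You propose to reflect the boundary tangent map $\phi:\R^m_+\to\n$ (constant Dirichlet data on $\partial\R^m_+$) to a minimizing biharmonic map on all of $\R^m$ and then appeal to Scheven's interior bound $\dim_\h\Sigma\le m-5$. This fails in two places. First, unlike the second-order harmonic case where an even reflection of a map constant on the flat boundary stays in $\n$, the reflection compatible with a fourth-order problem is a higher-order reflection of $u-\vp$, and the paper notes explicitly that this reflected map does \emph{not} in general take values in $\n$; there is no natural way to extend the tangent map across $T_1$ as a map into the target that would inherit minimality among $\n$-valued competitors (symmetrizing a competitor does not preserve the constraint $w(x)\in\n$). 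Second, even if you could produce a minimizing map on $\R^m$, the bound $\dim_\h\Sigma\le m-5$ does \emph{not} force the singular set to be empty: $x/|x|:\R^5\to\S^4$ is a minimizing biharmonic map that is homogeneous of degree zero and singular at the origin, so dimension reduction alone cannot eliminate an isolated singularity. What actually rules out a boundary singularity is the extra constraint that the tangent map is \emph{constant on $T_1$}. The paper exploits this in Lemma~\ref{jednstzero} via a direct variational comparison in the spirit of Hardt--Lin's Theorem~5.7 for $p$-harmonic maps: it shifts the cone vertex from the origin to an interior point $(0,\ldots,0,\alpha)$, builds a competitor $v_\alpha$, and shows that the one-sided derivative $J'(0^+)$ of the Hessian energy is simultaneously $\ge 0$ (minimality) and $\le 0$ (by explicit computation using properties (i)--(ix)), forcing the energy density $e(u_0)\equiv 0$ and hence $u_0\equiv\mathrm{const}$. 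This comparison-map argument, not a reflection of the tangent map, is the new ingredient you are missing. A smaller issue: in your compactness step you assert that the weak limit is again minimizing; this is not known for general targets $\n$ in the biharmonic setting (only for $\n=\S^{\ell-1}$), and the paper instead invokes Scheven's Lemma~\ref{le:mintanmaps} to know that the specific tangent maps appearing in the Federer reduction are minimizing, which is enough.
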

 
Similarly as in the case of harmonic \cite{SU2} and $p$-harmonic \cite{HLp} maps the complete boundary regularity is based on the nonexistence of nonconstant boundary tangent maps. We will consider tangent maps at the boundary and prove that they arise as strong limits of rescaled maps on some smaller domain, containing a portion of the boundary. In order to obtain a strong convergence from a sequence we initially only know is uniformly bounded in $W^{2,2}$ we will prove an analogue of Scheven's compactness result. 

Scheven, following the result for harmonic maps \cite{Lin-blowup}, has based his argument on an analysis of defect measures. We follow his general strategy, modifying numerous technical details so that the proof works for a map obtained via a higher order reflection across a flat portion of the boundary. 

We will not prove that a limit $u$ of a weakly convergent sequence of minimizing maps 
$(u_j)_{j\in\Z}$ is again minimizing. Such a result, is known only in the case when $\n=\S^{\ell-1}$ (see \cite[Lemma 3.3.]{HW}). In the case of harmonic maps, such a result is known for minimizing maps into arbitrary target manifolds. Since the maps $u_j$ and $u$ slightly differ on the boundary one may not use directly the definition of minimizing map to compare their energies. A tool for comparing those energies was provided by Luckhaus and his lemma in \cite{Lu}. Unfortunately we may not use directly Luckhaus’s lemma to maps from $W^{2,2}$. An analogue of this lemma is not known in the biharmonic setting. 

Instead, similarly as in \cite{SU1, SU2} and \cite{HLp}, for us it will be sufficient that in very simple situations a limit of \emph{minimizing} maps is again \emph{minimizing}. By a repeated formation of tangent boundary maps we arrive at a boundary tangent map which has a special form --- it is independent of the first $(m-5)$-variables, homogeneous of degree 0, whose only discontinuity may occur at the origin. It was proved by Scheven that such maps are in fact minimizing (cf. Lemma \ref{le:mintanmaps}). 

{\bf Outline of the article.}
The paper is organized as follows. In Section \ref{s:factsbiharmonic} we state various facts on biharmonic maps which will be needed in the following proofs. In Section \ref{s:compactness} we give a boundary analogue of Scheven's compactness result for minimizing biharmonic maps. In Section \ref{s:tangentmaps} we focus on the tangent maps at the boundary. We prove that there exist no nonconstant boundary tangent maps and finally give the proof of the main result.
 
 {\bf Notation.} We use the following notation
\[
 \int_{\partial B_r\setminus\partial B_\rho} f \dh := \int_{\partial B_r} f \dh - \int_{\partial B_\rho} f \dh. 
\]
 For balls centered at the origin we often write $B_r(0)=B_r$, for $B_1$ we simply write $B$. Sometimes to emphasize the dimension of a ball we will write $B^k$, for a $k$-dimensional ball. We also write $\R^m_+ = \{x=(x_1,\ldots,x_m)\in\R^m: x_m>0\}$, $\R^m_- = \{x=(x_1,\ldots,x_m)\in\R^m: x_m<0\}$, $B^+_r(a)= B_r(a)\cap \R^m_+$, and $B^-_r(a) = B_r(a)\cap \R^m_-$. For the the flat part of the boundary of $\partial B_\sigma^+$ we use
\[
 T_\sigma = \{x\in B_\sigma\colon x_m=0\}.
\]

In what follows we will use sequences and partial derivatives, for partial derivatives we write
\[
 \frac{\partial}{\partial x_i} u = \partial_i u = u_{x_i},
\]
while $u_i$ will denote the $i$-th element of a sequence of maps $(u_j)_{j\in\N}$. For simplicity we will try to use the following convention: Letters $u,\ v,\ w$ will be used to denote maps from $B^+$ into $\n$, whereas $\w{u},\ \w{v}, \w{w}$ will denote maps from $B$ into $\R^\ell$. The constant $C$ traditionally stands for a general constant and may vary from line to line. 
 
\section{Facts on regularity of biharmonic maps}\label{s:factsbiharmonic}
In this section we gather facts from the regularity theory of biharmonic maps, which will be needed later on. We begin by recalling the definition of Morrey spaces, for more details see, e.g., \cite[Chapter 3]{Giaquinta1993}.

Let $p\ge 1$, $\lambda>0$, and $\Omega$ be a bounded domain in $\R^m$. We say that a function $f\in L^p(\Omega)$ belongs to the Morrey space $L^{p,\lambda}(\Omega)$ if 
\begin{equation}\label{eq:morrey}
 \norm{f}_{L^{p,\lambda}(\Omega)}^p := \sup_{a\in\Omega,\, r>0} r^{-\lambda}\int_{B_r(a)\cap\Omega} |f(x)|^{p}\, dx < \infty.
\end{equation}

The following boundary decay estimate for biharmonic maps that satisfy a smallness condition in Morrey norm is due to Gong, Lamm, and C.~Wang, see \cite[Lemma 3.1, p. 179]{GLW}. 
\begin{lemma}\label{le:eps-reg}
 There exists $\ve>0$ and $\theta\in\big(0,\frac12\big)$ such that if $u\in W^{2,2}(B^+,\n)$ is a biharmonic map satisfying 
 
 \begin{equation}
 u \Big\rvert_{T_1} = \vp \Big\rvert_{T_1} \quad\text {and}\quad \frac{\partial u}{\partial x_m}\bigg\rvert_{T_1} = \frac{\partial \vp}{\partial x_m}\bigg\rvert_{T_1} \quad\text{for some}\quad \vp\in C^\infty(\overline{B^+},\n) 
 \end{equation}
and
\begin{equation}\label{eq:condepsreg}
 \norm{\nabla^2 u}^2_{L^{2,m-4}(B^+)} + \norm{\nabla u}^4_{L^{4,m-4}(B^+)}\le \ve,
\end{equation}
then
\begin{equation}
\norm{\nabla u}_{L^{2,m-2}(B^+_{\theta})} \le \frac12 \norm{\nabla u}_{L^{2,m-2}(B^+)} + C\theta\norm{\nabla\vp}_{C^1(B^+)}.
\end{equation}
In particular, $u\in C^\infty\left(\overline{B^+_{\frac12}},\n\right)$.
\end{lemma}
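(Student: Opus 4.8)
The plan is to follow the standard $\varepsilon$-regularity paradigm: first establish a boundary decay estimate for the *normalized* Morrey quantity $\|\nabla u\|_{L^{2,m-2}}$ on a shrunk half-ball, then iterate it on successively smaller scales to deduce the right Morrey decay, and finally invoke higher-order regularity theory to conclude smoothness. The heart of the matter is the single-scale estimate
\[
\|\nabla u\|_{L^{2,m-2}(B^+_{\theta})} \le \tfrac12 \|\nabla u\|_{L^{2,m-2}(B^+)} + C\theta\|\nabla\vp\|_{C^1(B^+)},
\]
which I would obtain by a contradiction/compactness argument combined with a linear estimate for the limiting (bi)harmonic problem.

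First I would set up the blow-up. Suppose the claimed estimate fails for every $\theta$; then, fixing a small $\theta$ to be chosen from the linear theory, there is a sequence $u_k$ of biharmonic maps with the correct boundary data $\vp_k$, satisfying the smallness hypothesis \eqref{eq:condepsreg} with $\varepsilon = \varepsilon_k \to 0$, yet violating the decay inequality. One normalizes: writing $\lambda_k^2 = \|\nabla u_k\|^2_{L^{2,m-2}(B^+)}$ (which tends to $0$ because of \eqref{eq:condepsreg} and $\|\nabla\vp_k\|_{C^1}$ being controlled — here one must be a little careful and, as in Gong--Lamm--Wang, either normalize by $\lambda_k^2 + \|\nabla\vp_k\|^2_{C^1}$ or subtract off $\vp_k$), the rescaled maps $v_k = (u_k - \vp_k)/\lambda_k$ are bounded in $W^{2,2}(B^+)$ with a normalized Morrey bound on $\nabla v_k$. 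The hard analytic input is to show $v_k \to v$ strongly in $W^{1,2}_{loc}$ up to the flat boundary $T_1$, and that the limit $v$ solves the linear biharmonic system $\Delta^2 v = 0$ in $B^+$ with homogeneous Dirichlet conditions $v = 0$, $\partial_m v = 0$ on $T_1$ (the nonlinear terms in the Euler--Lagrange equation \eqref{eq:elgeneral1}, which are quadratic and cubic in the first and second derivatives, die in the limit after dividing by $\lambda_k$). For the linear limit problem, standard interior-plus-boundary estimates for the bilaplacian give the Morrey decay $\|\nabla v\|_{L^{2,m-2}(B^+_\theta)} \le C\theta^2 \|\nabla v\|_{L^{2,m-2}(B^+)}$, say, because $v$ extends smoothly across $T_1$ by the odd/even reflection compatible with the two boundary conditions; choosing $\theta$ so that $C\theta^2 < \tfrac14$ contradicts the assumed failure of the estimate once $k$ is large. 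The passage to the limit in the Morrey norm (upper semicontinuity of the sup defining $\|\cdot\|_{L^{2,m-2}}$ under the strong $W^{1,2}_{loc}$ convergence, plus a tail estimate near the spherical part of $\partial B^+$) is the delicate bookkeeping step.

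The main obstacle, as usual in these compactness-via-blow-up arguments, is the \emph{strong} $W^{1,2}$ convergence of $v_k$ up to the boundary: weak convergence is free from the uniform $W^{2,2}$ bound, but one needs to rule out concentration of $|\nabla v_k|^2$ (a defect measure). Here the smallness condition \eqref{eq:condepsreg} is exactly what one uses — together with a Rellich--Pohozaev / monotonicity-type identity or a direct testing of the equation against cut-offs of $v_k$ itself — to show the defect measure vanishes. This is precisely where one cites or adapts the interior $\varepsilon$-regularity machinery of Chang--Wang--Yang, Wang, and Scheven, transplanted to the half-ball using the higher-order reflection across $T_1$; since Lemma~\ref{le:eps-reg} is quoted verbatim from \cite[Lemma 3.1]{GLW}, for the purposes of this paper the honest thing is to recall that the proof is in that reference and sketch only the blow-up skeleton above.

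Finally, once the decay estimate holds at one scale, a routine iteration $B^+_{\theta^j}$ — absorbing the linear-in-$\theta$ error term, which sums geometrically — yields $\|\nabla u\|_{L^{2,m-2}(B^+_r)} \le C r^\alpha$ for some $\alpha>0$ and all small $r$, i.e. a Morrey–Campanato bound placing $\nabla u$ in a Hölder-type space near $T_1$. Feeding this improved integrability back into the Euler--Lagrange system and bootstrapping with linear elliptic (bi)harmonic estimates up to the boundary — again using that the boundary data $\vp$ is $C^\infty$ — upgrades $u$ to $C^\infty(\overline{B^+_{1/2}},\n)$, which is the last assertion. I expect no essential difficulty in this final bootstrap beyond standard Schauder/$L^p$ theory for fourth-order equations with Dirichlet conditions.
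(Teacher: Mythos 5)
You have correctly recognized the essential point: the paper contains no proof of this lemma --- it is quoted verbatim from \cite[Lemma 3.1]{GLW}, and the present paper simply cites it. So there is no ``paper's own proof'' to compare your argument against, and your closing remark that the honest thing is to cite the reference is exactly what the paper does.

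That said, the blow-up/compactness sketch you outline is a genuinely different route from what Gong--Lamm--Wang actually carry out, and in the form you give it has a real gap. Their decay estimate (with the precise $\tfrac12$ contraction factor and the $C\theta\norm{\nabla\vp}_{C^1}$ error) is obtained \emph{directly}: extend $u-\vp$ across $T_1$ by a higher-order reflection, split the extended map into a biharmonic part (which has the linear decay) plus a remainder solving an inhomogeneous bilaplace equation whose right-hand side is controlled in a Lorentz/Morrey scale by the smallness hypothesis \eqref{eq:condepsreg}, and estimate the remainder via Riesz-potential estimates. This ``biharmonic replacement'' decomposition is the direct analogue of harmonic replacement and avoids any compactness argument. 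In your indirect sketch, the step where the normalized blow-ups $v_k$ converge \emph{strongly} in $W^{1,2}_{loc}$ up to $T_1$ is precisely the crux, and you only gesture at it (``Rellich--Pohozaev / monotonicity-type identity or direct testing \dots to show the defect measure vanishes''). The smallness of the Morrey norm does not by itself preclude concentration for a general weakly biharmonic map; for \emph{minimizing} maps one can compare against a replacement map to rule out defects (that is what Section~\ref{s:compactness} of this paper does for a different purpose), but Lemma \ref{le:eps-reg} is stated for biharmonic maps without a minimality hypothesis, so your compactness mechanism is not available. This is precisely why the literature uses direct PDE estimates here, not blow-up. A secondary technical imprecision: for the limiting problem $\Delta^2 v=0$ with $v=\partial_m v=0$ on $T_1$, an odd (or even) reflection does not extend $v$ to a biharmonic map across $T_1$ --- one needs the same kind of higher-order reflection the paper uses in \eqref{def:reflection} --- so the phrase ``extends smoothly across $T_1$ by the odd/even reflection'' should be corrected.
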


The following theorem is a key-ingredient in the regularity theory. It was first proved for sufficiently regular maps by Chang, L.~Wang, and Yang in \cite[Proposition 3.2.]{CWY} and for the general case by Angelsberg in \cite{Ang}. We employ the notation for $\Phi_u$ from \cite[Theorem 2.3]{Scheven}.

\begin{theorem}[Monotonicity formula]\label{monoform}
 Let $u\in W^{2,2}(B_R^+,\n)$ be stationary biharmonic and $a\in B_{R/4}$. Then the expression
 \begin{align*}
  &\Phi_u(a,r) := \\ & \quad r^{4-m}\int_{B_r(a)}|\Delta u|^2 \, dx \\
                &\quad + 2\int_{\partial B_r(a)}\left(\frac{(x^i-a^i)u_{x_j}u_{ x_i x_j}}{|x-a|^{m-3}} - 2\frac{\left((x^i - a^i)u_{x_i}\right)^2}{|x-a|^{m-1}} +2\frac{|\nabla u|^2}{|x-a|^{m-3}}\right) \, d\h^{m-1}  
 \end{align*}
is well defined for a.e. $0<r\le R/4$ and monotonously nondecreasing for all $r$ outside a set of measure zero. more precisely, there holds for a.e. $0<\rho<r\le R/4$
\begin{equation}\label{mono}
\begin{split}
 \Phi_u(a,r&) - \Phi_u(a,\rho) =\\
 &4\int_{B_r(a)\setminus B_\rho(a)}\left(\frac{\left(u_{x_j}+(x^i-a^i)u_{ x_i x_j}\right)^2}{|x-a|^{m-2}}+(m-2)\frac{\left((x^i-a^i)u_{x_i}\right)^2}{|x-a|^m}\right) \, dx.
 \end{split}
\end{equation}

\end{theorem}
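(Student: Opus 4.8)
The plan is to run the classical inner-variation (Pohozaev) argument for the Hessian energy, due to Chang, L.~Wang and Yang \cite{CWY} in the smooth case and carried out rigorously at $W^{2,2}$ regularity by Angelsberg \cite{Ang}: convert the stationarity condition \eqref{eq:definitonstationaryharmonic} into a divergence-type identity, apply it to a radial vector field centred at $a$ and cut off at scale $\sigma$, and reorganise the result so that $\tfrac{d}{d\sigma}\Phi_u(a,\sigma)$ is isolated on one side and a manifestly nonnegative quantity appears on the other.

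First I would compute the first inner variation of $H$. Writing $\Phi_t=\mathrm{id}+t\xi$ for $\xi\in C^\infty_c(B_R^+,\R^m)$, expanding $\Delta(u\circ\Phi_t)$ and $|\det D\Phi_t|$ to first order in $t$, changing variables back to $x$ and differentiating at $t=0$, the stationarity condition \eqref{eq:definitonstationaryharmonic} becomes
\[
 \int_{B_R^+}\Bigl(4\,(\Delta u\cdot\partial_i\partial_j u)\,\partial_i\xi^j+2\,(\Delta u\cdot\partial_j u)\,\Delta\xi^j-|\Delta u|^2\,\partial_i\xi^i\Bigr)\dx=0
\]
for all $\xi\in C^\infty_c(B_R^+,\R^m)$.

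Next, after translating so that $a=0$, I would apply this conservation law to the radial field $\xi(x)=\zeta(|x|)\,x$, where $\zeta$ is a smooth nonincreasing approximation of $\mathbf 1_{[0,\sigma]}$ with $B_\sigma$ compactly contained in the domain, and then pass to the limit. With $r=|x|$ one has $\partial_i\xi^j=\zeta'(r)\tfrac{x_ix_j}{r}+\zeta(r)\,\delta_{ij}$, $\partial_i\xi^i=r\zeta'(r)+m\,\zeta(r)$ and $\Delta\xi^j=\zeta''(r)\,x_j+(m+1)\,\zeta'(r)\,\tfrac{x_j}{r}$, while the only a priori singular contractions are radially structured, $x_i\partial_i u=r\,\partial_r u$ and $x_ix_j\partial_i\partial_j u=r^2\,\partial_r^2 u$. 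Inserting these, the term $|\Delta u|^2\partial_i\xi^i$ produces, after the scaling $r^{4-m}$, the normalised bulk energy $r^{4-m}\int_{B_r}|\Delta u|^2\dx$; the $\zeta'$-contributions become, in the limit $\zeta\to\mathbf 1_{[0,\sigma]}$, surface integrals over $\partial B_\sigma$; and the $\zeta''$-contribution becomes $\tfrac{d}{d\sigma}$ of a surface integral, which is exactly why $\Phi_u(a,\sigma)$ carries a surface term alongside the bulk term. Carrying out the spherical integrations by parts (using $\Delta=\partial_r^2+\tfrac{m-1}{r}\partial_r+r^{-2}\Delta_\omega$, with $\Delta_\omega$ the Laplace--Beltrami operator of the unit sphere) to rewrite the terms $\int_{\partial B_\sigma}\Delta u\cdot(\cdots)$ through $\nabla u$ and $\nabla^2u$, one recognises the surface part of $\Phi_u$ and arrives at
\[
 \tfrac{d}{d\sigma}\Phi_u(0,\sigma)=4\,\sigma^{2-m}\!\!\int_{\partial B_\sigma}\!\bigl|\partial_j u+x^i\partial_i\partial_j u\bigr|^2\dh+4(m-2)\,\sigma^{-m}\!\!\int_{\partial B_\sigma}\!\bigl|x^i\partial_i u\bigr|^2\dh\ \ge\ 0
\]
for a.e.\ $\sigma$; integrating in $\sigma$ from $\rho$ to $r$ and applying the coarea formula once more turns the right-hand side into the solid integral over $B_r(a)\setminus B_\rho(a)$ appearing in \eqref{mono}, and the well-definedness of $\Phi_u(a,\cdot)$ at a.e.\ radius is automatic because the traces of $\nabla u$ and $\nabla^2u$ on $\partial B_\sigma(a)$ exist for a.e.\ $\sigma$.

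The step I expect to be the main obstacle is the bookkeeping in this reorganisation. Compared with the second-order (harmonic) case, the conservation law now contains the term $\Delta\xi$, i.e.\ second derivatives of the radial test field, which generates considerably more contributions, and verifying that after the spherical integrations by parts they collapse onto $\partial B_\sigma$ into precisely the combination displayed in \eqref{mono}, with the coefficients $2$, $4$ and $m-2$, is the delicate computation --- and it is here that the particular form of the surface part of $\Phi_u$ is forced. A secondary difficulty is rigour at $W^{2,2}$ regularity: the products in the conservation law are only $L^1$, the spherical traces of $\nabla^2 u$ exist only for a.e.\ radius, and the passage $\zeta\to\mathbf 1_{[0,\sigma]}$ needs an approximation argument; this is what confines the monotonicity to hold merely ``for a.e.\ $r$'' and is the content of Angelsberg's treatment \cite{Ang} in contrast to the formal derivation of \cite{CWY}.
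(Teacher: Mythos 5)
The paper does not prove Theorem~\ref{monoform} itself: it records the statement and cites \cite{CWY} for the smooth case and \cite{Ang} for the general $W^{2,2}$ case, with Remark~2.3 explaining that Angelsberg's proof consists precisely of inserting a suitable test function in the first variation formula. Your sketch reproduces exactly that strategy --- the conservation law you write down is the correct inner-variation identity, the radial cut-off field $\xi=\zeta(|x|)x$ is the right choice, and your identification of the two main obstacles (the bookkeeping of the $\Delta\xi$ terms, and the approximation/a.e.\ issues at $W^{2,2}$ regularity) is accurate --- so your proposal is consistent with the route the paper relies on.
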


\begin{remark}
The Angelsberg's proof of monotonicity formula, roughly speaking, is based on inserting a correct test function in the so-called first variational formula (an equation which follows from the definitions of stationary harmonic maps). This idea follows the proof of monotonicity formula for stationary \emph{harmonic} maps (see, e.g., \cite{Simon1996}), which in turn is based on the proof of the monotonicity formula for Yang--Mills fields, see \cite{Price1983}. The first publication of a monotonicity formula for \emph{minimizing} harmonic maps seems to be \cite[Proposition 2.4.]{SU1}, which by reflection arguments can be extended to boundary monotonicity formula for minimizing harmonic maps, see \cite[Lemma 1.3.]{SU2}. The proof in \cite{SU1} proof relies on constructing a comparison map. It would be interesting to obtain an analogous proof for \emph{minimizing} biharmonic maps. 
\end{remark}

The following theorem is a boundary analogue of the monotonicity formula. It was first proved for any $W^{4,2}$ biharmonic map (not necessary minimizing), see \cite[Section 2]{GLW}. Recently a boundary monotonicity formula was derived for all \emph{minimizing} biharmonic mappings in $W^{2,2}$ with sufficiently smooth boundary data, see \cite{Serdar}.

\begin{theorem}\label{thm:boundarymono}
Let $u\in W^{2,2}(\Omega,\n)$ be a minimizing biharmonic map with a boundary map $\vp\in C^\infty(\Omega_\delta,\n)$ as in \eqref{eq:dirichletboundary}. Then $u$ satisfies the boundary monotonicity inequality, i.e., there exist $R_0>0$ and $C = C(m,\partial\Omega,\delta, \|\vp\|_{C^4(\Omega_\delta)})$ such that for any $a\in\partial\Omega$ and $0<\rho\le r\le R_0$, there holds

\begin{equation}\label{eq:boundarymono}
\begin{split}
H^+_{u}(a,\rho) + e^{C\rho}R^+_{u}&(a,\rho) + P^+_{u}(a,\rho,r)\\
& \le e^{Cr}H^+_{u}(a,r)+ e^{Cr}R^+_{u}(a,r) +Cre^{Cr},
\end{split}
\end{equation}
where
\begin{align}
H^+_{u}(a,\tau) &:=\tau^{4-m} \int\limits_{B_{\tau}(a)\cap\Omega}|\nabla^2(u-\vp)|^2 \, dx,\label{de:Hplus}\\
P^+_{u }(a,\rho,r)&:=\int_{\brac{B_r(a)\setminus B_\rho(a)}\cap\Omega}\frac{|(u-\vp)_{ x_j} +(x-a)^i(u-\vp)_{ x_i x_j}|^2}{|x-a |^{m-2}} \, dx\nonumber\\
& \quad + (m-2)\int_{\brac{B_r(a)\setminus B_\rho(a)}\cap\Omega}\frac{|(x-a)^i(u-\vp)_{ x_i}|^2}{|x-a|^{m}} \, dx \label{de:Pplus} 
\end{align}
and 
\begin{equation}\label{de:Rplus}
R^+_{u }(a,\tau) = (F^+_{u }(a,\tau) + G^+_{u }(a,\tau)) 
\end{equation}
for
\[
 \begin{split}
  F^+_{u }(a,\tau)&:= 2\tau^{3-m}\int\limits_{\partial B_\tau(a)\cap\Omega}(x-a)^i(u-\vp)_{ x_j}(u-\vp)_{ x_i x_j} \, d\h^{m-1}\\
  &\quad-4\tau^{3-m}\int\limits_{\partial B_\tau(a)\cap\Omega}\brac{\frac{|(x-a)^i(u-\vp)_{ x_i}|^2}{|x-a|^{2}}-|\nabla(u-\vp)|^2}\, d\h^{m-1},\\
  G^+_{u }(a,\tau)&:=2\tau^{4-m}\int\limits_{\partial B_\tau(a)\cap\Omega}\Bigg( \left\langle\Delta(u-\vp),\frac{\partial}{\partial r}(u-\vp)\right\rangle \\ 
  &\quad \quad \quad \quad \quad \quad \quad \quad \quad- \left\langle\nabla(u-\vp),\frac{\partial}{\partial r}(\nabla(u-\vp))\right\rangle \Bigg)\, d\h^{m-1}.
 \end{split}
\]
In the latter $\frac{\partial}{\partial r}$ is the directional derivative in the direction of the outward pointing unit normal for $\partial B_\tau(a)$.
\end{theorem}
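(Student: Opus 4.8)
The plan is to prove \eqref{eq:boundarymono} by a Pohozaev--type (inner variation) computation carried out up to the boundary, following the $W^{4,2}$ argument of Gong--Lamm--Wang \cite{GLW} but using only the inner stationarity that every \emph{minimizing} map automatically satisfies; the comparison--map proof of Schoen--Uhlenbeck \cite{SU1,SU2} does not seem available here, as already noted above.

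\textbf{Step 1: reduction to a flat model.} Since $\p\Omega$ is smooth, near a fixed $a\in\p\Omega$ we straighten the boundary by a smooth diffeomorphism; after this change of variables the domain becomes a half--ball $B^+_{R_0}$, the boundary piece becomes $T_{R_0}$, and the Euclidean metric is replaced by a metric $g$ with $g(0)=\mathrm{id}$, $|g(x)-\mathrm{id}|\le C|x|$ and $|\nabla g|\le C$ on $B^+_{R_0}$. We then pass to $w:=u-\vp$, which satisfies $w=0$ and $\p_{x_m}w=0$ on $T_{R_0}$, and note that $H^+_u,R^+_u,P^+_u$ are exactly the fourth--order scaling quantities built from $\nabla^2 w$. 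Every deviation of $g$ from $\mathrm{id}$ and every term produced by $\vp$ being non--constant will enter only as a lower--order error, ultimately responsible for the factors $e^{Cr}$ and the term $Cre^{Cr}$.

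\textbf{Step 2: a weak Pohozaev identity up to the flat boundary.} A minimizing biharmonic map is inner--stationary: $\frac{d}{dt}\big|_{t=0}H(u\circ\psi_t^{-1})=0$ for $\psi_t=\mathrm{id}+tX$, $X\in C^\infty_c(B^+_{R_0},\R^m)$. We wish to take for $X$ a cut--off radial field $x\mapsto\eta(|x|)\,x$ at scale $r$; because $a=0\in T_{R_0}$, this field is \emph{tangent} to the flat boundary, so $\psi_t$ maps $B^+_{R_0}$ into itself for $t$ small, but $u\circ\psi_t^{-1}$ no longer realizes the prescribed Dirichlet data. We therefore compare $u$ with the corrected competitor $u_t:=\pi_\n\!\big(u\circ\psi_t^{-1}+\vp-\vp\circ\psi_t^{-1}\big)$, which takes values in $\n$ for $t$ small (the correction is $O(t)$ in $C^4$ and $u\circ\psi_t^{-1}$ already maps into $\n$), has the correct $0$-- and $1$--jet on $T_{R_0}$, and coincides with $u$ near the spherical part of $\p B^+_{R_0}$; minimality gives $H(u)\le H(u_t)$ for $t$ of either sign, hence $\frac{d}{dt}\big|_{t=0}H(u_t)=0$. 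Expanding this identity yields the inner variation of $u$ against the radial field plus correction integrals whose integrands are bounded pointwise by $C\|\vp\|_{C^4}|X|$, and re--expressing the inner variation as in the proof of Theorem~\ref{monoform} produces, for a.e.\ $0<\rho<r\le R_0$, an identity on $B^+_r(a)\setminus B^+_\rho(a)$. Its principal part has the same algebraic structure as the right--hand side of \eqref{mono} specialized to the half--ball, so completing the square turns it into
\[
\Big(H^+_u(a,r)-H^+_u(a,\rho)\Big)+\Big(R^+_u(a,r)-R^+_u(a,\rho)\Big)=P^+_u(a,\rho,r)+\mathcal{E}(\rho,r),
\]
where $P^+_u\ge0$, the integrals over the flat part $T_r\setminus T_\rho$ drop out because the radial field is tangent to $T_{R_0}$ and $w\equiv\p_{x_m}w\equiv0$ there, and the error $\mathcal{E}$ collects the curvature terms of Step~1 and the $\vp$--correction terms, so that $|\mathcal{E}(\rho,r)|\le Cr+C\int_\rho^r\tau^{-1}\big(H^+_u(a,\tau)+|R^+_u(a,\tau)|\big)\,d\tau$.

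\textbf{Step 3: Gronwall, and the main difficulty.} Keeping the nonnegative term $P^+_u(a,\rho,r)$ on the left, the identity of Step~2 rearranges into $H^+_u(a,\rho)+R^+_u(a,\rho)+P^+_u(a,\rho,r)\le H^+_u(a,r)+R^+_u(a,r)+Cr+C\int_\rho^r\tau^{-1}(\ldots)\,d\tau$, and the Gronwall lemma absorbs the last integral, converting it into the multiplicative factors $e^{C\rho}$, $e^{Cr}$ and the additive remainder $Cre^{Cr}$; this is \eqref{eq:boundarymono}. (As in Theorem~\ref{monoform} one first argues for a.e.\ $r$, where all spherical integrals are finite, and then upgrades to every $r$ using the monotone structure.) The genuinely delicate step is Step~2 at the regularity $W^{2,2}$: one must construct the corrected competitor $u_t$ so that it is honestly $\n$--valued \emph{and} carries the correct first--order boundary jet, differentiate $t\mapsto H(u_t)$ rigorously, and identify the outcome with a Pohozaev identity that is meaningful although $\nabla^2 u$ only lies in $L^2$ --- in particular one must show the spherical boundary integrals defining $R^+_u$ are finite for a.e.\ $r$ and appear with the correct signs. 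This is the biharmonic counterpart of the place where, for harmonic maps, one invokes Luckhaus's lemma; since no $W^{2,2}$ analogue of that lemma is known, the construction of admissible competitors realizing the domain deformation must be done directly, and is the technical core of the argument, carried out by Altuntas in \cite{Serdar}.
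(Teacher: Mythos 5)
The paper does not supply a proof of Theorem~\ref{thm:boundarymono}: it is quoted as a result of Altuntas~\cite{Serdar} (and, in the $W^{4,2}$ case, of Gong--Lamm--Wang~\cite{GLW}), so there is no in-paper argument against which your proposal can be checked. Your text is a roadmap rather than a proof. Steps~1 and~3 (flattening the boundary, then a Gronwall argument to produce the exponential weights and the $Cre^{Cr}$ remainder) are routine; Step~2 is the real content. There your corrected competitor $u_t=\pi_\n\bigl(u\circ\psi_t^{-1}+\vp-\vp\circ\psi_t^{-1}\bigr)$ is the right kind of object: tangency of the radial field on $T_{R_0}$ ensures that $\psi_t$ preserves $T_{R_0}$ and that $D\psi_t^{-1}$ sends the normal to a multiple of itself there, so $u_t$ inherits the required $0$- and $1$-jet; and since $u$ is bounded with $\nabla u\in L^4$ by Gagliardo--Nirenberg, $u_t\in W^{2,2}(\cdot,\n)$ is an admissible competitor. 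You also correctly identify the technical crux --- differentiating $t\mapsto H(u_t)$ at $t=0$ at mere $W^{2,2}$ regularity and recognizing the outcome as a Pohozaev identity, with no biharmonic Luckhaus-type lemma to lean on --- but you do not carry that step out; you hand it back to~\cite{Serdar}. The proposal is therefore a plausible and, so far as the paper indicates, correctly aimed outline, not a self-contained proof --- which is exactly the status the theorem has within this paper, which simply cites~\cite{Serdar} for it.
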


The following result is a consequence of the boundary monotonicity formula and for the proof we refer to the appendix \ref{a:biharmonic}.
\begin{lemma}\label{le:boundednessinmorrey}
 Let $u\in W^{2,2}(B^+,\n)$ be a minimizing biharmonic map with boundary value $\vp$ as in \eqref{eq:dirichletboundary} and let additionally $\norm{u-\vp}_{W^{2,2}(B^+)}<\infty$. Then, for some $\Lambda>0$ we have $\norm{\nabla^2(u-\vp)}_{L^{2,m-4}(B^+)}<\Lambda$.
\end{lemma}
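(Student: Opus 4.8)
The plan is to extract boundedness in the Morrey space $L^{2,m-4}$ directly from the boundary monotonicity inequality \eqref{eq:boundarymono}, supplemented by its interior counterpart \eqref{mono}, by a fairly standard covering argument. The first step is to fix a scale: choose $R_0>0$ as in Theorem \ref{thm:boundarymono} (after flattening the boundary near a point, so that $\Omega$ locally looks like $\R^m_+$ and $\partial\Omega$ like $T_1$), and reduce to controlling, for every $a\in\overline{B^+_{1/2}}$ and every $0<r<r_0$ with $r_0$ small, the quantity $r^{4-m}\int_{B_r(a)\cap B^+}|\nabla^2(u-\vp)|^2\,dx$; the supremum over the remaining, non-degenerate range $r_0\le r\le 1$ is trivially bounded by $r_0^{m-4}\norm{u-\vp}_{W^{2,2}(B^+)}^2$, and balls $B_r(a)$ with center outside $\overline{B^+_{1/2}}$ but meeting $B^+_{1/4}$ (say) are handled by the same two cases after translating the center to the nearest boundary or interior point.

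The second step splits into two cases according to whether $a$ is near the boundary or deep inside. If $a\in T_1$ (i.e. $a_m=0$), then \eqref{eq:boundarymono} gives, dropping the nonnegative terms $P^+_u$ and $e^{C\rho}R^+_u$ on the left,
\[
 H^+_u(a,\rho)\le e^{Cr}\bracb{H^+_u(a,r)+R^+_u(a,r)+Cr}.
\]
Taking $r=r_0$ fixed and letting $\rho$ vary over $(0,r_0)$, the right-hand side is a finite constant $\Lambda_1$ depending only on $m$, $\partial\Omega$, $\delta$, $\norm{\vp}_{C^4}$, $r_0$ and $\norm{u-\vp}_{W^{2,2}(B^+)}$ — here one uses that $H^+_u(a,r_0)+Cr_0$ is controlled by $r_0^{4-m}\norm{\nabla^2(u-\vp)}_{L^2(B^+)}^2+Cr_0$, and that $R^+_u(a,r_0)=F^+_u(a,r_0)+G^+_u(a,r_0)$ is an integral over a sphere $\partial B_{r_0}(a)$, hence finite for a.e. choice of $r_0$ by Fubini (choose such an $r_0$ once and for all). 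This yields $\sup_{a\in T_1,\,0<\rho<r_0} H^+_u(a,\rho)\le\Lambda_1$. For a center $a\in B^+_{1/2}$ with $0<a_m<r_0$, one replaces $a$ by its projection $a'=(a_1,\dots,a_{m-1},0)\in T_1$ and observes $B_r(a)\cap B^+\subset B_{2r}(a')\cap B^+$, which only changes the constant. If instead $a_m\ge r_0$, the ball $B_r(a)$ for $r<r_0$ lies in the interior, and the interior monotonicity formula \eqref{mono} for $u$ (which is stationary biharmonic, being minimizing) controls $r^{4-m}\int_{B_r(a)}|\Delta u|^2\,dx$ by $\Phi_u(a,r_0)$ plus the boundary terms at radius $r_0$; converting $\int|\Delta u|^2$ to $\int|\nabla^2 u|^2$ up to lower-order terms (integration by parts, as $u$ is smooth in the interior by Scheven's interior regularity, with constants depending on $\norm{\nabla^2\vp}$ when one passes from $u$ to $u-\vp$) gives the same type of bound.

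The third step combines the two cases. For arbitrary $a\in\overline{B^+_{1/2}}$ and $0<r<r_0$: if $a_m<r_0$ use the boundary estimate, otherwise the interior one; in both cases $r^{4-m}\int_{B_r(a)\cap B^+}|\nabla^2(u-\vp)|^2\,dx\le\Lambda'$ for a single constant $\Lambda'$. Together with the trivial bound on the range $r\ge r_0$, this gives $\norm{\nabla^2(u-\vp)}_{L^{2,m-4}(B^+)}^2<\Lambda$ for a suitable $\Lambda>0$, as claimed. The main obstacle I anticipate is purely bookkeeping rather than conceptual: one must be careful that the transition from $\int|\Delta u|^2$ (which appears in the monotonicity formulas) to $\int|\nabla^2(u-\vp)|^2$ (which appears in the Morrey norm) does not cost more than a constant depending on the fixed smooth map $\vp$, and that the "bad radius'' issue — $R^+_u(a,r)$ and $\Phi_u(a,r)$ being defined only for a.e. $r$ — is resolved once and for all by fixing a single good radius $r_0$ rather than trying to make the estimate uniform in $r_0$.
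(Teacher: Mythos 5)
There is a genuine gap in the second step, and it is the crux of the lemma. You write ``dropping the nonnegative terms $P^+_u$ and $e^{C\rho}R^+_u$ on the left''. The term $P^+_u(a,\rho,r)$ is indeed a sum of nonnegative integrals, but $R^+_u(a,\rho)$ is \emph{not} sign-definite: by \eqref{de:Rplus} it is a surface integral over $\partial B_\rho(a)\cap\Omega$ of products such as $(x-a)^i(u-\vp)_{x_j}(u-\vp)_{x_ix_j}$ and $\langle\Delta(u-\vp),\partial_r(u-\vp)\rangle$, none of which has a sign. Discarding $e^{C\rho}R^+_u(a,\rho)$ on the left is therefore not allowed, and the inequality you derive from \eqref{eq:boundarymono} is unjustified. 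Handling this term is exactly where the work in the lemma lies; once it is dropped, what remains is essentially a restatement that monotone quantities are bounded.

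The paper's own proof (Lemma~\ref{le:appboundednessinmorrey}) keeps $R^+_u$ on both sides of \eqref{eq:boundarymono}, works with the reflection $\w{u}$ of $u-\vp$ so that only full balls appear, and bounds
\[
 |R^+_{u,\vp}(a,\tau)|\le C\tau^{4-m}\int_{\partial B_\tau(a)}\brac{|\nabla^2\w{u}||\nabla\w{u}|+\tau^{-1}|\nabla\w{u}|^2}\dh.
\]
Then it uses Fubini to select good radii on \emph{both} sides ($s\le\rho\le 2s$ and $1/2\le r\le 1$) so that these surface integrals are dominated by bulk $L^2$-integrals of $|\nabla^2\w{u}|$ and $|\nabla\w{u}|$; after Young's inequality this produces a term $\tfrac14(2s)^{4-m}\int_{B_{2s}(a)}|\nabla^2\w{u}|^2\dx$ plus $Cs^{2-m}\int_{B_{2s}(a)}|\nabla\w{u}|^2\dx$. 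The latter is then controlled via Nirenberg's interpolation inequality (using $\|\w{u}\|_{L^\infty}\le C(\n)$), yielding $\widehat{H}(s)\le\tfrac12\widehat{H}(2s)+C\widehat{H}(1)+\w{C}$, and the final bound follows from an absorption argument over $\sigma<s<1$. None of this Fubini/interpolation/absorption machinery appears in your proposal, and without it the sign-indefinite surface term cannot be dispensed with; simply fixing one good outer radius $r_0$, as you suggest, does not solve the problem because the troublesome term sits at the \emph{inner} radius $\rho$, which must range over $(0,r_0)$.
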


The following is also a consequence of the boundary monotonicity formula, the proof can be found in \cite[Lemma 4.1]{GLW}. (Compare also in the interior case in \cite[Lemma 4.8]{CWY}, \cite[Lemma 5.3]{Wang2004}, \cite[Appendix B]{Struwe2008}).

\begin{lemma}\label{le:glw41}
Assume that the hypothesis of Theorem \ref{thm:mainbih} is fulfilled. Then, there exist $\eps_1>0,\, \theta\in(0,1)$, $C_1 =C_1(m,\Omega,\n)$, and $R_1=R_1(R_0,\eps_1)$ such that if for $a\in\partial\Omega$ and $0<\tau\le R_1$
\begin{equation}\label{eq:regcond}
 \tau^{4-m}\int_{ B_\tau(a)\cap\Omega}\brac{|\nabla^2 u|^2 + \tau^{-2}|\nabla u|^2}\dx \le \eps_1^2,
\end{equation}
then
\begin{equation}\label{eq:oszacowaniele:glw41}
\sup_{B_\rho(b)\subset (B_{\theta \tau}(a)\cap\, \Omega)} \rho^{4-m}\int_{B_\rho(b)\cap\,\Omega}(|\nabla^2 u|^2 + |\nabla u|^4)\dx\le C_1\eps_1.
\end{equation}
\end{lemma}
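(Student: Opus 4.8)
The plan is to bootstrap from the monotonicity inequality \eqref{eq:boundarymono} to a Morrey-type smallness condition, and then feed this into the $\varepsilon$-regularity result Lemma \ref{le:eps-reg} applied on all sufficiently small half-balls centered at the boundary, together with the interior $\varepsilon$-regularity of \cite{Scheven} (or \cite{CWY,Wang2004,Struwe2008}) on half-balls well inside $\Omega$. First I would flatten the boundary near $a\in\partial\Omega$ by a smooth diffeomorphism, at the cost of perturbing the biharmonic operator by lower-order terms with controlled coefficients; this reduces us to the model situation of a half-ball $B^+_\tau$ with flat boundary portion $T_\tau$, and replaces $u$ by a map which is biharmonic with respect to a perturbed operator and still satisfies Dirichlet data $\vp$ on $T_\tau$. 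One should also note that, since $\vp$ is smooth, $|\nabla^2\vp|^2+|\nabla\vp|^4$ is bounded, so it suffices to estimate the corresponding quantities for $u-\vp$; the hypothesis \eqref{eq:regcond} together with $\|\vp\|_{C^2}$ gives smallness of $H^+_u(a,\tau)$ and of the rescaled $\nabla u$ term on scale $\tau$.

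The key step is the following dichotomy, carried out for every ball $B_\rho(b)$ with $B_\rho(b)\subset B_{\theta\tau}(a)\cap\Omega$ and $\theta$ a small universal constant to be fixed. If $b$ is close to $\partial\Omega$ relative to $\rho$ — say $\dist(b,\partial\Omega)\le 2\rho$ — then $B_\rho(b)$ is contained in a half-ball $B^+_{C\rho}(a')$ centered at a boundary point $a'\in\partial\Omega$ with $a'$ near $a$; I would then apply the boundary monotonicity inequality \eqref{eq:boundarymono} at $a'$ to transfer the smallness of the scaled energy from scale $\theta\tau$ down to scale $C\rho$, absorbing the $Cr e^{Cr}$ and the $\vp$-terms by choosing $R_1$ small, which yields $\|\nabla^2(u-\vp)\|^2_{L^{2,m-4}(B^+_{C\rho}(a'))}+\|\nabla(u-\vp)\|^4_{L^{4,m-4}(B^+_{C\rho}(a'))}\le\varepsilon$, the smallness condition \eqref{eq:condepsreg} of Lemma \ref{le:eps-reg}; that lemma then gives, after iterating the decay estimate finitely many times, the desired bound $\rho^{4-m}\int_{B_\rho(b)\cap\Omega}(|\nabla^2u|^2+|\nabla u|^4)\le C_1\varepsilon_1$. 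If instead $\dist(b,\partial\Omega)>2\rho$, then $B_\rho(b)$ is an interior ball and I would invoke the interior monotonicity formula (Theorem \ref{monoform}) at $b$ together with the interior $\varepsilon$-regularity estimate of \cite{Scheven}, again transferring smallness from scale $\sim\dist(b,\partial\Omega)$ (which is $\le 2\theta\tau$, and on which scale the energy is controlled by \eqref{eq:regcond} plus monotonicity) down to scale $\rho$. Taking the supremum over all admissible $B_\rho(b)$ in both cases, and absorbing all constants, produces \eqref{eq:oszacowaniele:glw41}.

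The main obstacle is the boundary case: one must ensure that the constant $C$ in the boundary monotonicity inequality \eqref{eq:boundarymono} — which depends on $\partial\Omega$, $\delta$, and $\|\vp\|_{C^4(\Omega_\delta)}$ but \emph{not} on the scale — together with the additive error $Cr e^{Cr}$, can genuinely be absorbed when passing from scale $\theta\tau$ to all smaller scales $\rho$, so that the smallness threshold $\varepsilon$ of Lemma \ref{le:eps-reg} is reached uniformly in $b$ and $\rho$; this forces the choice of $R_1=R_1(R_0,\varepsilon_1)$ small and $\varepsilon_1$ small relative to $\varepsilon$, and one must track that the $\vp$-contributions ($C\rho\|\nabla\vp\|_{C^1}$ in the decay estimate, and $Cr e^{Cr}$ in monotonicity) only produce a term of size $C\rho$ which, being a genuine higher power of the scale, does not spoil the Morrey exponent $m-4$. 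A secondary technical point is matching the two regimes along the transition region $\dist(b,\partial\Omega)\sim\rho$, which is handled by allowing the ball on which boundary $\varepsilon$-regularity is applied to have a slightly larger radius $C\rho$; the constant $C_1$ then depends on the resulting finite number of iteration steps in Lemma \ref{le:eps-reg}, hence only on $m$, $\Omega$, and $\n$.
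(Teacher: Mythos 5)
The paper does not give its own proof of this lemma; it simply cites \cite[Lemma 4.1]{GLW} and notes the result is a consequence of the boundary monotonicity formula. Your proposed dichotomy — near-boundary balls $B_\rho(b)$ with $\dist(b,\partial\Omega)\lesssim\rho$ treated via the boundary monotonicity inequality at a nearby boundary point $a'$, interior balls with $\dist(b,\partial\Omega)\gtrsim\rho$ treated via the interior monotonicity formula, with a matching along the transition region — is indeed the right skeleton and matches the structure one finds in \cite[Lemma 4.1]{GLW} and its interior prototypes (\cite[Lemma 4.8]{CWY}, \cite[Lemma 5.3]{Wang2004}).

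However, there is a genuine gap at the central step. You claim that transferring the monotonicity inequality from scale $\theta\tau$ to scale $C\rho$ yields
\[
\norm{\nabla^2(u-\vp)}^2_{L^{2,m-4}(B^+_{C\rho}(a'))}+\norm{\nabla(u-\vp)}^4_{L^{4,m-4}(B^+_{C\rho}(a'))}\le\ve .
\]
But the boundary monotonicity inequality \eqref{eq:boundarymono} controls only $H^+_u$, i.e.\ the scaled $L^2$ norm of $\nabla^2(u-\vp)$ (plus the boundary term $R^+_u$); it says nothing about the scaled $L^4$ norm of $\nabla u$. The same is true of the interior monotonicity formula, whose density is $r^{4-m}\int|\Delta u|^2$. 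The hypothesis \eqref{eq:regcond} likewise gives smallness only of $\tau^{4-m}\int|\nabla^2 u|^2$ and of $\tau^{2-m}\int|\nabla u|^2$, not of $\tau^{4-m}\int|\nabla u|^4$. Passing from the Hessian bound to the $|\nabla u|^4$ bound requires the Gagliardo--Nirenberg interpolation on each ball, of the form $\norm{\nabla u}_{L^4(B_\rho)}^2\le C\norm{u}_{L^\infty}\bracb{\norm{\nabla^2u}_{L^2(B_\rho)}+\rho^{-1}\norm{\nabla u}_{L^2(B_\rho)}}$, which crucially uses that $u$ has values in the compact manifold $\n$, hence $\norm{u}_{L^\infty}\le C(\n)$. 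This interpolation is the essential ingredient of the lemma and is absent from your proposal.

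A secondary issue is circularity in the final step: once you have the Morrey smallness of $\norm{\nabla^2(u-\vp)}^2_{L^{2,m-4}}+\norm{\nabla(u-\vp)}^4_{L^{4,m-4}}$, the conclusion \eqref{eq:oszacowaniele:glw41} follows immediately (after adding back the $\vp$-contributions, which are of order $\rho^4$ and harmless); there is no need to invoke Lemma \ref{le:eps-reg}. In fact the logical flow is the reverse of what you wrote: Lemma \ref{le:glw41} produces the Morrey smallness which is the \emph{hypothesis} \eqref{eq:condepsreg} of Lemma \ref{le:eps-reg}, which in turn yields the decay of $\norm{\nabla u}_{L^{2,m-2}}$ and smoothness. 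Applying Lemma \ref{le:eps-reg} before \eqref{eq:oszacowaniele:glw41} is established inverts that order.
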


The following epsilon regularity result is the main result of \cite{GLW}.
\begin{theorem}\label{thm:condbdrregstat}

 Let $m\ge 5$, $\vp\in C^\infty(\Omega_\delta,\n)$ for some $\delta>0$, assume that $u\in W^{2,2}(\Omega,\n)$ is a minimizing biharmonic map, which satisfies the boundary monotonicity inequality \eqref{eq:boundarymono}. Then, there exists an $\eps_2>0$ such that $u\in C^{\infty}(\overline{\Omega}\setminus\Sigma)$, where the singular set is given by 
 \[
  \Sigma:= \left\{a\in\overline{\Omega}: \liminf_{r\searrow0} r^{4-m}\int_{B_r(a)\cap\Omega}\brac{|\nabla^2u|^2 + |\nabla u|^4}\dx\ge\eps_2 \right\}
 \]
and $\h^{m-4}(\Sigma)=0$.
\end{theorem}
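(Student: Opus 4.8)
~\textbf{Proof plan.}
The statement to be proved --- Theorem~\ref{thm:condbdrregstat}, the boundary $\eps$-regularity result of Gong--Lamm--Wang --- is conditional on the boundary monotonicity inequality \eqref{eq:boundarymono}, which by Theorem~\ref{thm:boundarymono} we now know holds for all minimizing biharmonic maps. So the plan is to combine the $\eps$-regularity machinery already assembled in this section. First I would reduce to the model situation via a boundary flattening: near a point $a\in\partial\Omega$, choose a diffeomorphism straightening $\partial\Omega$ to $T_1$, transferring $u$ and $\vp$ to maps on $B^+$; since the biharmonic equation and the Morrey-type quantities are controlled under such a change of coordinates, it suffices to work on $B^+$ with flat boundary portion $T_1$. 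Then the argument is a standard covering/iteration: points of $\overline\Omega\setminus\Sigma$ fall into two classes, interior points and boundary points.

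For an interior point $x_0\in\Omega\setminus\Sigma$ one has, by definition of $\Sigma$, that $\liminf_{r\searrow 0}r^{4-m}\int_{B_r(x_0)}(|\nabla^2 u|^2+|\nabla u|^4)\,dx<\eps_2$; choosing $\eps_2$ smaller than the interior $\eps$-regularity threshold of Scheven (or Struwe) and using the interior monotonicity formula (Theorem~\ref{monoform}) to upgrade the $\liminf$ to smallness on a fixed small ball $B_r(x_0)\Subset\Omega$, one concludes $u\in C^\infty(B_{r/2}(x_0))$. The genuinely boundary part is where $a\in\partial\Omega$ with $\liminf_{r\searrow 0}r^{4-m}\int_{B_r(a)\cap\Omega}(|\nabla^2 u|^2+|\nabla u|^4)\,dx<\eps_2$: here I would invoke the boundary monotonicity inequality \eqref{eq:boundarymono} to transfer this $\liminf$-smallness (first in terms of $u-\vp$, then, absorbing the smooth $\vp$-contributions, in terms of $u$ itself) into the hypothesis \eqref{eq:regcond} of Lemma~\ref{le:glw41} at some definite scale $\tau\le R_1$. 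Lemma~\ref{le:glw41} then yields the uniform Morrey smallness \eqref{eq:oszacowaniele:glw41} on $B_{\theta\tau}(a)\cap\Omega$, which --- after choosing $\eps_1$ so that $C_1\eps_1\le\ve$ with $\ve$ the threshold of Lemma~\ref{le:eps-reg} --- puts us in position to apply the boundary decay estimate Lemma~\ref{le:eps-reg} on every half-ball $B^+_\sigma(b)$ centered on $T_1$ inside this neighborhood, and the interior $\eps$-regularity on every ball compactly contained in the interior. Iterating the decay estimate Lemma~\ref{le:eps-reg} gives a H\"older decay $\|\nabla u\|_{L^{2,m-2+2\alpha}}$ on a smaller half-ball, hence $u\in C^{0,\alpha}$ up to $T_1$; then one bootstraps using the Euler--Lagrange system and Schauder theory to get $u\in C^\infty$ up to the boundary near $a$. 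Thus $\overline\Omega\setminus\Sigma$ is open and $u$ is smooth there.

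It remains to bound the size of $\Sigma$. I would argue that $\Sigma$ is the union of its interior part $\Sigma\cap\Omega$ and its boundary part $\Sigma\cap\partial\Omega$. For the interior part, Scheven's interior result already gives $\dim_{\h}(\Sigma\cap\Omega)\le m-5$, in particular $\h^{m-4}(\Sigma\cap\Omega)=0$. For the boundary part, one uses a standard Federer-type dimension-reduction / covering argument based on the boundary monotonicity inequality: the density function $a\mapsto\Theta(a):=\lim_{r\searrow 0}(\text{monotone quantity})$ is well defined and upper semicontinuous on $\partial\Omega$ by \eqref{eq:boundarymono} (the exponential factors and the $Cre^{Cr}$ error term are harmless in the limit), $\Sigma\cap\partial\Omega$ is contained in $\{\Theta\ge\eps_2'\}$ for a suitable $\eps_2'$, and a Vitali covering estimate at the critical scaling shows this set has vanishing $\h^{m-4}$-measure --- indeed $\h^{m-5}$-measure zero relative to $\partial\Omega$, but $\h^{m-4}(\Sigma)=0$ is all that is claimed. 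Combining the two parts gives $\h^{m-4}(\Sigma)=0$.

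\textbf{Main obstacle.} The routine-looking but delicate point is the passage from the definition of $\Sigma$ (a $\liminf$ condition on $\int(|\nabla^2u|^2+|\nabla u|^4)$) to the scale-fixed hypothesis \eqref{eq:regcond} of Lemma~\ref{le:glw41} (a $\limsup$-type smallness of $\int(|\nabla^2 u|^2+\tau^{-2}|\nabla u|^2)$): the boundary monotonicity inequality \eqref{eq:boundarymono}, unlike a clean monotonicity formula, comes with exponential correction factors and an additive error $Cre^{Cr}$, and is stated in terms of $u-\vp$ rather than $u$. One must carefully absorb the smooth terms involving $\vp$ (using $\vp\in C^\infty(\Omega_\delta,\n)$ and interpolation on Morrey norms), control the boundary integral terms $R^+_u$ by the bulk Hessian energy via trace estimates, and choose all the scales $R_0,R_1,\tau$ and thresholds $\eps_1,\eps_2$ compatibly so that the chain Lemma~\ref{le:glw41} $\Rightarrow$ Lemma~\ref{le:eps-reg} closes. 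This bookkeeping --- essentially reproducing Sections~3--4 of \cite{GLW} but invoking Theorem~\ref{thm:boundarymono} in place of their standing assumption --- is where the real work lies, though no new ideas beyond those already present in the cited references are needed.
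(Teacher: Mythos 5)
The paper does not prove this statement at all: Theorem~\ref{thm:condbdrregstat} is quoted verbatim as the main result of \cite{GLW}, with no proof reproduced or sketched. So there is nothing in the paper's own text against which your argument can be compared; the honest assessment is of whether your blind reconstruction is a plausible account of the Gong--Lamm--Wang argument.

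Taken on those terms, your outline is consistent with the cited reference and with the machinery set up in Section~\ref{s:factsbiharmonic}. The decomposition into interior and boundary points, the boundary flattening, the chain Lemma~\ref{le:glw41} $\Rightarrow$ Lemma~\ref{le:eps-reg}, the iteration of the decay estimate to H\"older regularity and bootstrap to $C^\infty$, and the Federer-type covering argument for $\h^{m-4}(\Sigma)=0$ via the boundary density function are all the expected ingredients, and you correctly flag the genuinely delicate points (passing from the $\liminf$ defining $\Sigma$ to the scale-fixed hypothesis \eqref{eq:regcond}, absorbing the $\vp$-contributions and the $Cre^{Cr}$ error, controlling $R^+_u$ by trace estimates). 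One small overclaim: you assert that the boundary part of $\Sigma$ has $\h^{m-5}$-measure zero relative to $\partial\Omega$; the [GLW] conditional theorem, and this statement, only assert $\h^{m-4}(\Sigma)=0$, and the improved $m-5$ boundary dimension bound for \emph{minimizing} maps is precisely what the rest of this paper is establishing, so it should not be asserted here. Also note that \cite{GLW} formulates the result for \emph{stationary} maps under the monotonicity hypothesis; the paper restates it for minimizing maps because those are the ones to which it will be applied, but the proof in \cite{GLW} does not use minimality beyond the assumed monotonicity inequality.

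The practical takeaway is that you need not (and should not) reprove this theorem within the paper; a citation to \cite{GLW} is what is intended, and your ``proof'' is really a summary of that reference.
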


\begin{lemma}\label{le:odpSchThm26}
 There are constants $\eps_3>0$ and $\theta\in(0,1)$ such that under the assumptions of Theorem \ref{thm:mainbih} a minimizing biharmonic map $u\in C^{\infty}(B_r^+,\n)$ with boundary values $\vp$ as in \ref{eq:dirichletboundary} with
 \[
  r^{4-m}\int_{B_r^+(a)}\brac{|\nabla^2 u|^2 + r^{-2}|\nabla u|^2}\dx \le \eps_3
 \]
satisfies $\norm{\nabla u}_{C^2(B_{\theta r}(a)\cap\Omega)}\le 1$.
\end{lemma}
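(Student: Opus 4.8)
The plan is to transplant Scheven's interior estimate \cite[Theorem~2.6]{Scheven} to a flat portion of the boundary, the three inputs being the $\varepsilon$-regularity decay of Lemma~\ref{le:eps-reg}, the propagation of smallness in Lemma~\ref{le:glw41} (which is where minimality enters, through the boundary monotonicity formula of Theorem~\ref{thm:boundarymono}), and $L^p$/Schauder estimates for the bilaplacian under the clamped conditions $u|_{T_1}=\vp|_{T_1}$, $\partial_{x_m}u|_{T_1}=\partial_{x_m}\vp|_{T_1}$. First I would flatten $\p\Omega$ near $a$, translate $a$ to the origin, and rescale $u(x)\mapsto u(rx)$: minimality and the biharmonic system are preserved, the hypothesis becomes $\int_{B^+}\bigl(|\nabla^2u|^2+|\nabla u|^2\bigr)\dx\le\eps_3$, and the boundary datum is replaced by $\vp_r(x):=\vp(rx)$, with $\norm{\nabla^k\vp_r}_{C^0(B^+)}\le Cr^{k}\norm{\vp}_{C^k(\Omega_\delta)}$ for $k\ge 1$. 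Since the lemma is only invoked at scales below a threshold fixed by the data, $\vp_r$ is $C^k$-close to a constant and its contribution can be absorbed into $\eps_3$ throughout (possibly after a further harmless rescaling so that the admissible scale $R_1$ of Lemma~\ref{le:glw41} is reached).

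Taking $\eps_3$ small enough, I would then apply Lemma~\ref{le:glw41} to upgrade the single-scale smallness to the scale-invariant Morrey bound
\[
\sup_{B_\rho(b)\subset B_\theta^+}\rho^{4-m}\int_{B_\rho(b)\cap\Omega}\bigl(|\nabla^2u|^2+|\nabla u|^4\bigr)\dx\le C_1\eps_1 ,
\]
and choose $\eps_1$ so that $C_1\eps_1\le\ve$. Then the smallness hypothesis \eqref{eq:condepsreg} of Lemma~\ref{le:eps-reg}, and of its interior counterpart, holds at every scale on all half-balls centred on $T_\theta$ and all interior balls inside $B_\theta^+$. Iterating the decay estimate of Lemma~\ref{le:eps-reg} over dyadic (half-)balls --- the inhomogeneous term $C\sigma\norm{\nabla\vp_r}_{C^1}=O(\sigma)$ produced at scale $\sigma$ does not spoil the geometric factor $\tfrac12$ --- yields, for some $\gamma=\gamma(\theta)\in(0,1)$,
\[
\sigma^{2-m}\int_{B_\sigma^+(b)}|\nabla u|^2\dx\le C\bigl(\eps_3+r^2\norm{\vp}_{C^2(\Omega_\delta)}^2\bigr)\,\sigma^{2\gamma}
\]
for all $b$ in a slightly smaller half-ball $B_{\theta'}^+$ and all small $\sigma$; by Morrey's Dirichlet-growth lemma on the half-ball (the clamped condition controls the trace on $T_{\theta'}$) this gives $u\in C^{0,\gamma}(\overline{B_{\theta'}^+},\n)$ with norm controlled by the right-hand side.

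With $u$ now Hölder continuous the Euler--Lagrange system $\Delta^2u\perp T_u\n$ becomes subcritical --- schematically its right-hand side is a sum of terms $\nabla^3u*\nabla u$, $\nabla^2u*\nabla^2u$, $\nabla^2u*\nabla u*\nabla u$ and $\nabla u*\nabla u*\nabla u*\nabla u$ with coefficients smooth in $u$ --- so, continuing the argument of \cite{GLW} behind Lemma~\ref{le:eps-reg}, the standard $L^p$ and Schauder estimates for $\Delta^2$ with the clamped data bootstrap $u$ from $C^{0,\gamma}$ through $W^{2,p}$, $C^{1,\alpha}$, $C^{2,\alpha}$ up to $C^{3,\alpha}(\overline{B_{\theta r}^+(a)})$, recovering quantitatively the smoothness already assumed. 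Keeping track of how the constants depend on $\eps_3$ and on $\vp_r$ gives
\[
\norm{\nabla u}_{C^2(B_{\theta r}(a)\cap\Omega)}\le C\bigl(\eps_3+r^2\norm{\vp}_{C^{4}(\Omega_\delta)}^2\bigr)^{1/2}+Cr\norm{\vp}_{C^{4}(\Omega_\delta)},
\]
and, shrinking $\eps_3$ (and recalling that the lemma is applied at scales below a threshold fixed by $\vp$), the right-hand side is at most $1$, which is the assertion.

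The step I expect to be the genuine obstacle, and the reason this is not a verbatim copy of Scheven's interior argument, is carrying the decay iteration and the Schauder bootstrap \emph{through} the flat boundary: one must check that the higher-order reflection across $T_1$ underlying Lemma~\ref{le:eps-reg} and the boundary monotonicity formula is compatible, scale by scale, with the clamped conditions, and keep careful track of the inhomogeneous terms generated by $\vp$ so that they neither destroy the geometric decay in the iteration nor obstruct making the final constant small.
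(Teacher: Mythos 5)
Your proposal takes essentially the same route as the paper, which simply transplants Scheven's proof of his Theorem~2.6 verbatim after swapping the interior ingredients for their boundary counterparts — the Morrey boundedness, the propagation of smallness via Lemma~\ref{le:glw41}, the GLW $\eps$-regularity, and Moser's boundary regularity theorem — exactly the inputs you identify. You merely unfold the details (explicit rescaling, dyadic decay iteration of Lemma~\ref{le:eps-reg} with the $O(\sigma)$ inhomogeneous term, Schauder bootstrap for the clamped bilaplacian) that the paper delegates to citations, so the argument is the same in substance.
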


\begin{proof}
The proof follows \cite[proof of Theorem 2.6]{Scheven}. We list the following boundary analogues needed to replace the interior facts used in \cite{Scheven}:
\begin{itemize}
 \item Lemma 2.4 (i) in \cite{Scheven} by Lemma \ref{le:boundednessinmorrey};
 \item Lemma 2.4 (ii) in \cite{Scheven} by Lemma \ref{le:glw41};
 \item Theorem 2.5 in \cite{Scheven} by Theorem \ref{thm:condbdrregstat};
 \item Theorem 3.1 from \cite{Moserunpublished} by Theorem 4.1 from \cite{Moserunpublished} 
\end{itemize}
\end{proof}

Similarly as in \cite[Corollary 2.7]{Scheven}, we obtain the following consequence.
\begin{corollary}\label{co:odpschcor27}
 Let $\eps_0:= \min(\eps_1^2,C_{1}\eps_1,\eps_2,\eps_3)$ with constants introduced in Lemma \ref{le:glw41}, Theorem \ref{thm:condbdrregstat} and Lemma \ref{le:odpSchThm26}. Then there exists $\theta\in(0,1)$ such that for any minimizing biharmonic map $u\in W^{2,2}(B^+_r(a),\n)$, the estimate
 \begin{equation}\label{eq:odpcor27}
  r^{4-m}\int_{B_r^+(a)}\brac{|\nabla^2 u|^2 + r^{-2}|\nabla u|^2}\dx \le \eps_0
 \end{equation}
implies $u\in C^3(B_{\theta r}(a)\cap\Omega,\n)$ and $\norm{u}_{C^3(B_{\theta r}(a)\cap\Omega,\n)}$ is bounded by a constant depending only on $\n$.
\end{corollary}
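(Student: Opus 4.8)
The statement merely repackages the $\eps$-regularity results of this section into a single scale-invariant $C^3$ bound, and the plan is to mimic \cite[Corollary 2.7]{Scheven}, substituting the boundary analogues assembled above for the interior ingredients used there. The argument runs in three steps: propagate the smallness of the scale-invariant energy to all sub-balls; upgrade $u$ to a map that is smooth up to the flat part of the boundary; feed this smooth map into the a priori estimate of Lemma \ref{le:odpSchThm26}.

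First I would note that $\eps_0\le\eps_1^2$, so that given $u$ with \eqref{eq:odpcor27} the hypothesis \eqref{eq:regcond} of Lemma \ref{le:glw41} holds with $\tau=r$; hence \eqref{eq:oszacowaniele:glw41} gives
\[
\rho^{4-m}\int_{B_\rho(b)\cap\Omega}\bigl(|\nabla^2 u|^2+|\nabla u|^4\bigr)\dx\le C_1\eps_1
\qquad\text{for every } B_\rho(b)\subset B_{\theta_1 r}(a)\cap\Omega ,
\]
and, possibly after shrinking $\eps_1$ in Lemma \ref{le:glw41} so that $C_1\eps_1<\min(\eps_2,\eps_3)$ (which only decreases $\eps_0$), this density is in particular $<\eps_2$. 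Consequently the $\liminf$-quantity defining the singular set $\Sigma$ of Theorem \ref{thm:condbdrregstat} is $<\eps_2$ at every interior point of $B_{\theta_1 r}(a)$; at the boundary points of $B_{\theta_1 r}(a)$ one obtains the same conclusion by combining the displayed estimate with the boundary monotonicity inequality \eqref{eq:boundarymono} of Theorem \ref{thm:boundarymono} evaluated at those points, using that $\vp\in C^\infty$ makes all $\vp$-terms negligible at small scales and that the energy at the top scale at such a point is controlled by the energy at scale $r$ at $a$. Hence $\Sigma\cap\overline{B_{\theta_2 r}(a)\cap\Omega}=\emptyset$ for a slightly smaller $\theta_2$, and Theorem \ref{thm:condbdrregstat} (or, after rescaling, Lemma \ref{le:eps-reg}) yields $u\in C^\infty\bigl(\overline{B_{\theta_2 r}(a)\cap\Omega},\n\bigr)$.

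Now $u$ is a smooth minimizing biharmonic map on $B^+_{\theta_2 r}(a)$ with the Dirichlet data, and by the boundary monotonicity inequality together with a Poincar\'e estimate controlling $\rho^{-2}|\nabla u|^2$ by $|\nabla^2(u-\vp)|^2$ plus smooth $\vp$-terms, its scale-invariant energy at a slightly smaller scale still satisfies the bound $\le\eps_3$; Lemma \ref{le:odpSchThm26} then gives $\|\nabla u\|_{C^2(B_{\theta r}(a)\cap\Omega)}\le 1$ for a final $\theta\in(0,1)$, i.e. $u\in C^3$. Finally, since $u$ takes values in the compact manifold $\n$ we have $\|u\|_{C^0}\le\sup_{y\in\n}|y|$, so together with $\|\nabla u\|_{C^2}\le 1$ this bounds $\|u\|_{C^3(B_{\theta r}(a)\cap\Omega,\n)}$ by a constant depending only on $\n$.

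\textbf{Main obstacle.} The delicate point is the second step: passing, near the boundary, from the bare $W^{2,2}$ hypothesis and the smallness of the scale-invariant energy to genuine $C^\infty$ regularity up to $\overline\Omega$. This forces one to show that the density stays below $\eps_2$ at the boundary points of $B_{\theta_1 r}(a)$, which cannot be read off from Lemma \ref{le:glw41} alone (its conclusion controls only interior sub-balls) and instead requires tracking, via the boundary monotonicity inequality, how the smallness at the top scale descends to every smaller scale at every nearby boundary point — this is exactly where Theorem \ref{thm:boundarymono}, hence the minimizing assumption, is indispensable, and where our situation departs from the interior setting of \cite{Scheven}. The remaining ingredients — the sub-ball propagation of smallness, the a priori $C^3$ bound, and the dependence of the final constant on $\n$ alone — are already encoded in Lemma \ref{le:glw41} and Lemma \ref{le:odpSchThm26}, so no further difficulty arises there.
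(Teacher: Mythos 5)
Your proof takes essentially the same route as the paper, which gives only a one-line citation of Scheven's Corollary 2.7: propagate the small scale-invariant energy to sub-balls via Lemma \ref{le:glw41}, conclude that the singular set of Theorem \ref{thm:condbdrregstat} is empty near $a$ (so $u$ is smooth up to the boundary there), then apply Lemma \ref{le:odpSchThm26} and use compactness of $\n$ to obtain the $C^3$ bound depending only on $\n$. Your additional care at the boundary points of $B_{\theta_1 r}(a)$ (via the boundary monotonicity inequality of Theorem \ref{thm:boundarymono}) and the remark about shrinking $\eps_1$ so that $C_1\eps_1<\eps_2$ are correct and merely spell out what the paper's reference to \cite[Corollary 2.7]{Scheven} leaves implicit.
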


\section{Compactness at the boundary}\label{s:compactness}
For simplicity we will assume that $\Omega = B^+_4$. In this situation our boundary condition states that 
 \begin{equation}\label{eq:boundarydataonhalfball}
  \brac{u,\frac{\partial u}{\partial x_m}}\Bigg\rvert_{T_4}=\brac{\vp,\frac{\partial \vp}{\partial x_m}}\Bigg\rvert_{T_4}.
 \end{equation}

The following compactness theorem is due to Scheven, cf. \cite[Theorem 1.5.]{Scheven}. Here we present a boundary analogue of this statement.
\begin{theorem}\label{thm:comp}
There is a constant $C_\vp=C_\vp(m)$ with the following property. 

Let $M(B^+_{4})\subseteq W^{2,2}(B^+_{4},\n)$ be the closure with respect to the $W^{2,2}_{loc}$-topology of the set of minimizing biharmonic maps. Let $u_i\in M(B^+_{4})$, be a sequence of maps with boundary values $\varphi_i$ in the sense of \eqref{eq:dirichletboundary} and $\vp\in C^{\infty}(B^+_4)$. Moreover, let 
\begin{equation}\label{eq:compthmassumptions}
\begin{split}
&\vp_i\rightarrow \vp \quad \text{ strongly in }W^{2,2}_{loc} \text { and } L^6_{loc},\\
&\sup_{i\in\N}\norm{\vp_i}_{C^2(B_4^+)}<\frac{\eps_0}{2},\quad \sup_{i\in\N}\norm{\vp_i}_{C^3(B^+_4)}<C(\n),\\
&\sup_{i\in\N} \norm{u_i}_{W^{2,2}(B_4^+)}<\infty, \ \text{ and }\quad \int_{B^+_4}|\Delta \vp|^2\dx<C_\vp,  
\end{split}
\end{equation}
where $\eps_0$ is the constant from Corollary $\ref{co:odpschcor27}$. Then, there is a map $u\in M(B^+_{4})$ such that, up to a subsequence, $u_{i}\rightarrow u$ in $W^{2,2}(B^+_{1/2},\n)$ as $i\rightarrow\infty$.
\end{theorem}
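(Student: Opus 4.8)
The plan is to follow Scheven's interior compactness argument (cf. \cite[Theorem 1.5]{Scheven}), adapting each step to the half-ball and the reflection setting. First, from $\sup_i \norm{u_i}_{W^{2,2}(B_4^+)} < \infty$ we extract a subsequence with $u_i \rightharpoonup u$ weakly in $W^{2,2}(B_4^+)$ and, by Rellich, strongly in $W^{1,2}_{loc}$ and in $L^q_{loc}$ for $q<2^*$; combined with $\vp_i \to \vp$ in $W^{2,2}_{loc}\cap L^6_{loc}$ this shows $u$ attains the boundary data $\vp$ in the sense of \eqref{eq:dirichletboundary}. One must also check $u \in M(B_4^+)$: since each $u_i$ is itself a $W^{2,2}_{loc}$-limit of minimizing maps, a diagonal argument places $u$ in the closed set $M(B_4^+)$. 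The substantive content is upgrading weak to strong convergence on $B^+_{1/2}$, and the obstruction to this is the possible loss of Hessian energy in the weak limit. The standard device is the defect measure: passing to a further subsequence, $|\nabla^2(u_i-\vp_i)|^2\,dx + |\nabla(u_i-\vp_i)|^4\,dx \rightharpoonup \mu$ weakly-$*$ as Radon measures on $\overline{B_4^+}$, and one writes $\mu = |\nabla^2(u-\vp)|^2\,dx + |\nabla(u-\vp)|^4\,dx + \nu$ with $\nu \ge 0$ the defect part; strong $W^{2,2}$ convergence on $B^+_{1/2}$ is equivalent to $\nu(B^+_{1/2}) = 0$.

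The key step is a lower-density bound: there is $\eps_0>0$ (the constant from Corollary \ref{co:odpschcor27}) such that the concentration set $\Sigma := \{x\in \overline{B_2^+} : \nu(\{x\})>0\} \subseteq \{x \in \overline{B_2^+}: \Theta^{m-4}(\mu,x) \ge \eps_0\}$, where $\Theta^{m-4}$ denotes the $(m-4)$-dimensional upper density. This forces $\Sigma$ to be $\h^{m-4}$-finite, hence $\h^{m-5+\alpha}$-negligible for $\alpha>1$, and in particular $\mathcal{L}^m$-null. The mechanism: at any point $a$ where the density of $\mu$ is below $\eps_0$, one can choose a radius $r$ with $r^{4-m}\mu(B_r^+(a))$ small; by the boundary monotonicity inequality of Theorem \ref{thm:boundarymono} applied to each $u_i$ (this is where Altuntas's formula \cite{Serdar} enters, replacing the interior monotonicity Scheven uses) the scale-invariant energies $r^{4-m}\int_{B_r^+(a)}(|\nabla^2 u_i|^2 + r^{-2}|\nabla u_i|^2)$ stay small for all large $i$, so Corollary \ref{co:odpschcor27} gives uniform $C^3$ bounds for $u_i$ on $B^+_{\theta r}(a)$; Arzelà--Ascoli then yields strong convergence near $a$, so $\nu$ vanishes there. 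This is the heart of the matter and the main obstacle: one must carefully arrange the rescaled boundary data so that the $\vp_i$-hypotheses of Corollary \ref{co:odpschcor27} (the $C^2$-smallness $< \eps_0/2$ and the $C^3$-bound) survive under the blow-up, and one needs a covering/Vitali argument on the half-space, with the flat boundary portion $T_\sigma$ treated on equal footing with interior points.

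It remains to promote "$\nu$ has no atoms and is concentrated on an $\h^{m-4}$-finite set" to "$\nu \equiv 0$ on $B^+_{1/2}$." Here I would reproduce Scheven's argument that the defect measure, if nonzero, must itself carry a monotonicity-type lower bound incompatible with $\h^{m-4}$-finiteness — concretely, that $\nu$ is supported on a set of Hausdorff dimension $\le m-4$ yet, by a Federer--Ziemer / stratification argument combined with the homogeneity of blow-ups of $\mu$, any nontrivial piece of $\nu$ would propagate to positive $\mathcal{L}^m$ measure, a contradiction. (Alternatively, once one knows $\mathcal L^m(\Sigma)=0$, strong $W^{1,2}$ convergence plus the $\eps$-regularity Lemma \ref{le:eps-reg} applied off $\Sigma$ gives $u_i\to u$ in $C^2_{loc}(\overline{B^+_2}\setminus\Sigma)$, and then the energy identity $\int_{B^+_{1/2}}|\Delta u_i|^2 \to \int_{B^+_{1/2}}|\Delta u|^2$ follows by showing no energy escapes into the null set $\Sigma$, using the monotonicity formula to control $\mu(B_r^+(a))$ for $a\in\Sigma$ as $r\to 0$ uniformly in $i$; this is the step where the constant $C_\vp$ bounding $\int_{B_4^+}|\Delta\vp|^2$ is used, to absorb the boundary contributions in the monotonicity inequality.) Strong convergence of the Laplacians together with weak $W^{2,2}$ convergence yields strong $W^{2,2}(B^+_{1/2})$ convergence, completing the proof.
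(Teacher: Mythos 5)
The first half of your proposal is on track: extracting a weakly convergent subsequence, forming the defect measure, and using the boundary monotonicity formula plus the $\eps$-regularity machinery (Lemma~\ref{le:glw41}, Corollary~\ref{co:odpschcor27}) together with Arzel\`a--Ascoli to show that $\nu$ is concentrated on an $\h^{m-4}$-finite set away from which one has $C^2_{loc}$ convergence --- that is Theorem~\ref{thm:structureofdefectmeasures} and Corollary~\ref{co:odpschco36} in the paper, essentially as in Scheven. (The paper does this via a higher-order reflection of $u_i-\vp_i$ to the full ball rather than directly on $B^+$, but that is a choice, not a gap.)

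The gap is in your final step, which is the heart of the theorem. Neither of the two mechanisms you offer for upgrading ``$\nu$ is $\h^{m-4}$-finite'' to ``$\nu\equiv 0$'' actually works. Your first suggestion --- that any nontrivial piece of $\nu$ ``would propagate to positive $\mathcal L^m$ measure'' by a Federer--Ziemer/stratification argument --- has no basis: by construction $\nu$ lives on a set of Hausdorff dimension at most $m-4$ and nothing forces it to spread. Your parenthetical alternative --- that $\mathcal L^m(\Sigma)=0$ plus $C^2_{loc}$ convergence off $\Sigma$ and the monotonicity formula show ``no energy escapes into the null set $\Sigma$'' --- conflates Lebesgue-nullity of the concentration set with absence of energy concentration, which is exactly the phenomenon one must rule out; the monotonicity formula bounds scaled densities but does not by itself prevent a positive defect at codimension-four points. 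What is actually needed is the comparison-map construction that you never mention. After blowing up to reduce to the model case $\w u_*\equiv 0$, $\overline\nu = C\,\h^{m-4}\mr(\{0\}\times\R^{m-4})$ (Lemma~\ref{le:odpschle37}), one builds for each $i$ a competitor $v_i$: interpolate between $u_i$ and $\vp_i$ using a cut-off $\psi$ away from a small torus around the concentration set, and on the torus $\mathbb T_{4\sigma}$ pull $\wu_i$ back by the Lipschitz retraction $\Psi$ from \cite[Lemma~3.8]{Scheven} (checking that $v_i-u_i\in W^{2,2}_0(B^+)$, which requires a careful argument on $T_1\cap\mathbb T_{4\sigma}$). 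Minimality of $u_i$ gives $\int_{B^+}|\Delta u_i|^2 \le \int_{B^+}|\Delta v_i|^2$, and passing to the limit and choosing $\kappa$ close to $1$, $\sigma$ small, and $C_\vp$ small produces a strict contradiction with the flat form of $\overline\nu$. This is where $C_\vp$ genuinely enters --- as the energy cost of interpolating toward $\vp_i$ inside $B_\kappa$ --- not, as you suggest, in a monotonicity-absorption step. Without this comparison argument the proof does not close.
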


\begin{remark}
 In fact the $L^6$ convergence of $\nabla \vp_i$ can be relaxed to $L^{4+\epsilon}$ for any $\epsilon>0$. For this purpose, in the proof of Theorem \ref{thm:comp}, one should replace Young's inequality with exponents 3 and $\frac32$ in the estimate \eqref{eq:vtildawithouttsigma}, by 
 \[
  |\nabla \vp_i|^2|\nabla u_i|^2 \le \frac{2|\nabla \vp_i|^{4+\epsilon}}{4+\epsilon} + \frac{(2+\epsilon)|\nabla \wu_i|^{\frac{8+2\epsilon}{2+\epsilon}}}{4+\epsilon}.
 \]
To proceed with the proof, the only important thing for us in this estimate is that the exponent at $|\nabla \wu_i|$ stays below 4.
\end{remark}

We will extend $(u-\vp)$ onto the whole ball by a higher order reflection, for properties of the reflection see, e.g, \cite[proof of Theorem 4.26]{Adams}. We choose such a reflection, which preserves $C^3$ continuity of a map. Let $u\in W^{2,2}(B_4^+,\n)$ with boundary values $\vp$ as in \eqref{eq:dirichletboundary}, then the reflection $\w{u}$ is given by
\begin{equation}\label{def:reflection}
\begin{aligned}
 \widetilde{u}(x)= \begin{cases} u(x)-\vp(x) &\mbox{for } x_m\ge 0, \\
\sum_{i=1}^4 \lambda_i (u-\vp)\left(x',-\frac{x_m}{i}\right) & \mbox{for } x_m < 0, \end{cases} 
\end{aligned}
\end{equation}
where $x'=(x_1,\ldots,x_{m-1})$ denotes the first $(m-1)$-coordinates and the constants $\lambda_i$ are determined by the system
\[
\sum_{i=1}^4 \lambda_i \left(-\frac{1}{i}\right)^j=1\quad\text{ for } j=0,1,2,3. 
\]
Here $\lambda_1 = -10$, $\lambda_2 = 160$, $\lambda_3 = -405$, and $\lambda_4 = 256$. We note that $\wu$ in general does not have values in $\n$. Next, observe that since $u-\vp \in W^{2,2}_0(B^+_4,\R^\ell)$ we have $\w{u}\in W^{2,2}(B_4,\R^\ell)$. Let $\alpha=(\alpha_1,\ldots,\alpha_m)$ be such that $|\alpha|\le3$ and 
\[
 E_\alpha \wu (x) = \begin{cases}
                   u(x) - \vp(x)&\mbox{for } x_m\ge 0, \\
                   \sum_{i=1}^4 \lambda_i\brac{-\frac1i}^{\alpha_m} (u-\vp)\left(x',-\frac{x_m}{i}\right) & \mbox{for } x_m < 0.
                  \end{cases}
\]
It follows easily that, if $u-\vp\in C^3(\overline{B}_4^+,\R^\ell)$, then $\w{u}\in C^3(B_4,\R^\ell)$ and \[D^{\alpha}\wu (x) = E_\alpha D^\alpha \wu(x).\]
Moreover, if $a\in T_4$ and $r>0$ is such that $B_r(a)\subset B_4$, then for $x_m\in B_r^-(a)$ we have $-\frac{x_m}{i}\in B_r^+(a)$. Thus, for $|\alpha|=2$, we have the following estimate
\begin{equation}\label{eq:reflectionnormestimate}
\begin{split}
 \int_{B_r(a)} |\nabla^2 \w{u}|^2 \dx  &= \int_{B_r^+(a)} |\nabla^2 (u-\vp)|^2 \dx  \\
 &\quad + \int_{B_r^-(a)} \left|\sum_{i=1}^4 \lambda_i\brac{-\frac1i}^{\alpha_m} D^\alpha(u-\vp)\left(x',-\frac{x_m}{i}\right)\right|^2\dx\\
 &\le C_{ref}\int_{B_r^+(a)}|\nabla^2 (u-\vp)|^2\dx
\end{split}
 \end{equation}
with $C_{ref} \le832$. 
 
For reflected maps as in \eqref{def:reflection}, in order to prove Theorem \ref{thm:comp} we closely follow Scheven's Section 3.1 of \cite{Scheven}, adjusting numerous technical details whenever necessary. All of the tools used by Scheven in proofs of Lemmata, Theorem and Corollaries have their boundary analogues, therefore, we will be rather brief in most of the proofs below. The difference here is that instead of working with minimizing maps themselves defined on half balls, we will work with higher order reflections of the differences of the mappings and their boundary data. Moreover, the boundary monotonicity formula has a little bit different form from the interior one and yields an additional term (which can still be well controlled).

We shall work with the following definition of convergence of pairs of sequences of maps and measures (slightly different from the one used in \cite{Scheven}).
\begin{definition}
For $i\in\N$, let $u_i\in W^{2,2}(\Omega,\n)$ and $\nu_i$ be Radon measures on $\overline{\Omega}$. We abbreviate $\mu_i:=(u_i,\nu_i)$. For a map $u_0\in W^{2,2}(\Omega,\n)$, a Radon measure $\nu_0$ on $\overline{\Omega}$ and $\mu_0:=(u_0,\nu_0)$ we write $\mu_i\rightrightarrows\mu_0$ as $i\rightarrow\infty$ if and only if 
 \begin{align*}
u_i &\rightharpoonup u_0  & &\text{ weakly in } W^{2,2}(\Omega)\\
u_i &\rightarrow u_0 & &\text{ strongly in } W^{1,2}(\Omega) \text{ and for a.e. }x\in\Omega \\
|\nabla^2 u_i|^2 dx + \nu_i &\rightharpoonup  |\nabla^2 u_0|^2dx + \nu_0 & &\text{ in the sense of measures.}
\end{align*}
\end{definition}
We recall that, by Lemma \ref{le:boundednessinmorrey}, any sequence of minimizing biharmonic maps $u_i$ with boundary conditions $\vp_i$ as in \eqref{eq:dirichletboundary}, such that $\sup_{i\in\N}\norm{\w{u}_i}_{W^{2,2}(B_4^+)}<\infty$, where $\w{u}_i$ is the reflection given by \eqref{def:reflection}, satisfies also $\sup_{i\in\N}\norm{\w{u}_i}_{L^{2,m-4}}(B_4^+)<\Lambda$ for some $\Lambda>0$.

We modify Scheven's set $\mathcal B^{M}_\Lambda$ to our purposes and let
\begin{equation}\label{eq:blambda}
\w{\mathcal{B}}_\Lambda : = 
 \left\{ \begin{array}{l|l}
    &(\w{u_i},0)\rightrightarrows(\w{u},\w{\nu}), \text{ where } \w{u_i}\in W^{2,2}(B_4,\R^\ell), \\
    &\w{u_i} \text{ are the reflections given in } \eqref{def:reflection}\\ 
    &\text{of minimizing biharmonic maps } \\
    (\w{u},\w{\nu})\, & u_i\in W^{2,2}(B^+,\n)\text{ with boundary values } \vp_i \\
    &\text{as in }\eqref{eq:dirichletboundary}\text{, satisfying boundary }\\ &\text{monotonicity formula } \eqref{eq:boundarymono},\\
    &\text{assumptions } \eqref{eq:compthmassumptions},\text{ and }\norm{\nabla^2 \w{u_i} }^2_{L^{2,m-4}(B)}\le\Lambda
  \end{array} \right\}.
\end{equation}

Combining boundary monotonicity \eqref{eq:boundarymono} with \eqref{eq:reflectionnormestimate} we obtain 

\begin{equation}\label{eq:monorefl}
\begin{split}
\rho^{4-m}&\int_{B_\rho(a)}|\nabla^2 \wu|^2 \dx + Ce^{C\rho} R^+_u(a,\rho)\\
&\le C\brac{\rho^{4-m}\int_{B_\rho^+(a)}|\nabla^2 (u-\vp)|^2 \dx + e^{C\rho} R^+_u(a,\rho)} \\
&\le C\brac{e^{Cr}r^{4-m}\int_{B_r^+(a)}|\nabla^2 (u-\vp)|^2 \dx + e^{Cr} R^+_u(a,r) + Cre^{Cr}}\\
&\le C\brac{e^{Cr}r^{4-m}\int_{B_r(a)}|\nabla^2 \wu|^2 \dx + e^{Cr} R^+_u(a,r) + Cre^{Cr}}.
\end{split}
\end{equation}

\begin{lemma}\label{le:convergenceofmonocomp}
 Assume $\w{\mathcal{B}}_{\Lambda}\ni (\w{u}_i,0)\rightrightarrows (c,\w{\nu})$ as $i\rightarrow\infty$ for a constant $c\in\R^\ell$ and a Radon measure $\w{\nu}$ on $\overline{B}_4$. Then for each $a\in B$, there is a subsequence $\{i_k\}_{k\in\N}$ such that for a.e. $0<r<1$ we have
 \begin{equation}\label{eq:zbieznoscpsiprzystalej}
r^{4-m} \int_{B_r(a)}|\nabla^2 \wu|^2 \dx + Ce^{Cr} R^+_u(a,r)\rightarrow r^{4-m}\widetilde{\nu}(B_r(a)) \text{ as } k\rightarrow\infty
  \end{equation}
 and for every $a\in B$ and for a.e. $0<\rho\le r<1$
 \begin{equation}\label{eq:measureinequality}
  \rho^{4-m}\widetilde{\nu}(B_\rho(a))\le C r^{4-m}\widetilde{\nu}(B_r(a)) + Cre^{Cr}.
 \end{equation}
If for a minimizing sequence of biharmonic maps $\{u_i\}$ we have $\widetilde{u}_i\rightarrow\widetilde{u}_0$ strongly in $W^{2,2}(B_4,\n)$ as $i\rightarrow\infty$, then for every $a\in B$ there is a subsequence $\{i_k\}\subset\N$ such that
\begin{equation}\label{eq:psistrong}
\begin{split}
 H^+_{u_{i_k}}(a,r)&\rightarrow H^+_{u_0}(a,r) \quad \text {for a.e. } 0<r<1 \text{ as } k\rightarrow\infty;\\
 R^+_{u_{i_k}}(a,r)&\rightarrow R^+_{u_0}(a,r) \quad \text {for a.e. } 0<r<1 \text{ as } k\rightarrow\infty,
\end{split}
 \end{equation}
 where $H^+$ and $R^+$ are the quantities from the boundary monotonicity formula and are defined in \eqref{de:Hplus} and \eqref{de:Rplus} respectively.

\end{lemma}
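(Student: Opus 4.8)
The statement collects three assertions, and the plan is, for each, to split the left‑hand side of \eqref{eq:zbieznoscpsiprzystalej} into a piece converging to the measure term and a remainder that vanishes, and then to read off \eqref{eq:measureinequality} by passing to the limit in the reflected monotonicity inequality \eqref{eq:monorefl}. Fix $a\in B$ and set $w_i:=u_i-\vp_i$, so $w_i=\widetilde u_i|_{B^+_4}$ by \eqref{def:reflection}; from \eqref{eq:compthmassumptions} we have $\sup_i\|\nabla^2 w_i\|_{L^2(B^+_4)}<\infty$. Since $(\widetilde u_i,0)\rightrightarrows(c,\widetilde\nu)$ with $c$ constant, $w_i\to c$ strongly in $W^{1,2}(B^+_4)$ and $|\nabla^2\widetilde u_i|^2\,dx\rightharpoonup\widetilde\nu$ in the sense of measures on $\overline B_4$; as $\widetilde\nu$ is finite, $\widetilde\nu(\partial B_r(a))=0$ for all but countably many $r$, and for every such $r$ one gets $r^{4-m}\int_{B_r(a)}|\nabla^2\widetilde u_i|^2\,dx\to r^{4-m}\widetilde\nu(B_r(a))$.

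The crux is to show that the remainder vanishes, i.e.\ that $R^+_{u_i}(a,r)\to0$ for a.e.\ $r$ along a subsequence. Termwise passage to the limit in the boundary integrals making up $R^+_{u_i}=F^+_{u_i}+G^+_{u_i}$ is unavailable, since they contain $\nabla^2 w_i$ restricted to spheres while $\nabla^2\widetilde u_i$ converges only weakly. Instead I would exploit the algebraic structure: using $|(x-a)^i(w_i)_{x_i}|\le|x-a|\,|\nabla w_i|$, $|\partial_r w_i|\le|\nabla w_i|$, $|\partial_r\nabla w_i|\le|\nabla^2 w_i|$ and $|\Delta w_i|\le\sqrt m\,|\nabla^2 w_i|$, each term of $R^+_{u_i}(a,\tau)$ is, in absolute value, at most $C\tau^{4-m}\int_{\partial B_\tau(a)\cap B^+_4}|\nabla w_i|\,|\nabla^2 w_i|\,d\mathcal H^{m-1}$ or $C\tau^{3-m}\int_{\partial B_\tau(a)\cap B^+_4}|\nabla w_i|^2\,d\mathcal H^{m-1}$. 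Hence, for each dyadic annulus $I_j=(2^{-j-1},2^{-j})$, the coarea formula and the Cauchy--Schwarz inequality yield
\[
  \int_{I_j}|R^+_{u_i}(a,\tau)|\,d\tau\le C_j\big(\|\nabla w_i\|_{L^2(B^+_4)}\|\nabla^2 w_i\|_{L^2(B^+_4)}+\|\nabla w_i\|_{L^2(B^+_4)}^2\big).
\]
Because the $W^{1,2}$-limit $c$ is constant, $\|\nabla w_i\|_{L^2(B^+_4)}\to0$, while $\|\nabla^2 w_i\|_{L^2(B^+_4)}$ stays bounded; therefore $\int_{I_j}|R^+_{u_i}(a,\tau)|\,d\tau\to0$ for every $j$, and a diagonal argument over $j$ produces a subsequence $\{i_k\}$ (depending on $a$) with $R^+_{u_{i_k}}(a,r)\to0$ for a.e.\ $r\in(0,1)$. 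Intersecting with the full-measure set of radii from the first paragraph gives \eqref{eq:zbieznoscpsiprzystalej}.

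For \eqref{eq:measureinequality}: for a.e.\ $0<\rho\le r<1$ the inequality \eqref{eq:monorefl} holds, and if in addition $\rho$ and $r$ are radii along which \eqref{eq:zbieznoscpsiprzystalej} is valid, then letting $k\to\infty$ along $\{i_k\}$ the left side of \eqref{eq:monorefl} tends to $\rho^{4-m}\widetilde\nu(B_\rho(a))$ and the right side to $C\big(e^{Cr}r^{4-m}\widetilde\nu(B_r(a))+Cre^{Cr}\big)$; since $e^{Cr}\le e^{C}$ on $(0,1)$, renaming constants gives \eqref{eq:measureinequality}, first for a.e.\ pair $(\rho,r)$ and, as $a\in B$ was arbitrary, for every $a$. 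For the final assertion, $\widetilde u_i\to\widetilde u_0$ strongly in $W^{2,2}(B_4)$ restricts to $w_i\to w_0:=u_0-\vp$ strongly in $W^{2,2}(B^+_4)$, so $|\nabla^2 w_i|^2\to|\nabla^2 w_0|^2$ in $L^1(B^+_4)$ and hence $H^+_{u_i}(a,r)\to H^+_{u_0}(a,r)$ for every $r$; moreover, by the coarea formula, $\int_0^1\big(\|\nabla^2(w_i-w_0)\|^2_{L^2(\partial B_r(a)\cap B^+_4)}+\|\nabla(w_i-w_0)\|^2_{L^2(\partial B_r(a)\cap B^+_4)}\big)\,dr\to0$, so along a subsequence $\nabla^2 w_i\to\nabla^2 w_0$ and $\nabla w_i\to\nabla w_0$ in $L^2(\partial B_r(a)\cap B^+_4)$ for a.e.\ $r$, and then each term of $R^+_{u_i}(a,r)$ --- an integral of a product of two such $L^2$-convergent factors --- converges to the corresponding term of $R^+_{u_0}(a,r)$, which is \eqref{eq:psistrong}.

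The single genuinely delicate step is the vanishing of $R^+$ in the constant-limit case: because $R^+$ contains second derivatives on spheres, no termwise limit exists, and the averaging-in-radius argument is what makes it go through --- it uses that every term of $R^+$ is at most linear in $\nabla^2 w_i$ (bounded in $L^2$) and carries a factor $\nabla w_i$ (tending to $0$ in $L^2$ because the limit is constant), at the price of obtaining the conclusion only for a.e.\ radius and along a subsequence, exactly as the lemma is stated. The remainder is bookkeeping: carrying the affine term $Cre^{Cr}$ through the limit, and intersecting the Lebesgue-null exceptional sets coming from \eqref{eq:monorefl}, from the measure convergence in the definition of $\rightrightarrows$, and from the coarea extractions, so that all the assertions hold for a.e.\ radius simultaneously.
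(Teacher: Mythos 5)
Your proposal is correct and follows essentially the same route as the paper's proof: both defer to Scheven's Lemma 3.2 for the measure-convergence part and then kill the additional boundary remainder $R^+_{u_i}(a,\cdot)$ by combining the Cauchy--Schwarz inequality, the coarea formula in the radial variable, boundedness of $\|\nabla^2\widetilde u_i\|_{L^2}$, and the strong $L^2$-convergence $\nabla\widetilde u_i\to 0$ forced by the constant limit, so that $R^+$ vanishes in $L^1$ of the radius and hence a.e.\ along a subsequence. Your dyadic-annulus bookkeeping is only a minor packaging difference from the paper's direct $L^1([0,1])$ estimate on the unweighted sphere integrals $f_i$, $g_i$.
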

\begin{proof}
  The proof is essentially the same as the proof of \cite[Lemma 3.2]{Scheven}. We briefly note the following differences. In order to obtain the convergence in  \eqref{eq:zbieznoscpsiprzystalej} we need to ensure that the term $Ce^{Cr}R^+_{u_i,\vp_i}(a,r)$ converges on a subsequence to $0$.
 With addition to the argument used by Scheven we let $a\in B$ be fixed  and 
 \[
  \widetilde{g}_i(\tau) := \int\limits_{\partial B_\tau(a)\cap B^+_4}\Bigg( \left\langle\Delta\wu_i,\frac{\partial}{\partial r}\wu_i\right\rangle 
  - \left\langle\nabla\wu_i,\frac{\partial}{\partial r}(\nabla\wu_i))\right\rangle \Bigg)\dh
 \]
for all $i\in\N$ and a.e $\tau\in(0,1]$. Then
\begin{equation*}
 \begin{split}
  \norm{g_i}_{L_1([0,1])}& \le C\norm{\wu_i}_{W^{2,2}}\norm{\nabla\wu_i}_{L^2}\rightarrow 0 \quad \text{as}\quad i\rightarrow\infty,
 \end{split}
\end{equation*}

which, after the same arguments as Scheven's, yields \eqref{eq:zbieznoscpsiprzystalej}. 

The inequality \eqref{eq:measureinequality} is a consequence of the monotonicity for the reflected map $\wu$ \eqref{eq:monorefl}.

In the second case, in addition to Scheven's argument, by the strong convergence we get strong convergence  in $L^1([0,1])$ of $f_i$ (defined as in \cite[proof of Lemma 3.2.]{Scheven} with $u_i$ replaced by the difference $(u_i-\vp_i)$) and of $g_i$ to some $f_0$ and $g_0$. We may choose a subsequence so that $f_{i_i} \rightarrow f_0$ a.e and $g_{i_k} \rightarrow g_0$ a.e. as $k\rightarrow\infty$. Together with the strong convergence of $u_i\rightarrow u_0$ we obtain \eqref{eq:psistrong}.

\end{proof}

We employ Scheven's definitions of rescaled pairs to our case of reflected maps. First, we observe that for every $\widetilde{\mu} = (\w{u},\w{\nu})\in \widetilde{\mathcal{B}}_\Lambda$ we have by definition 

\begin{equation}\label{eq:maloscdotangent}
 \sup_{a\in B,\, \rho<1}\rho^{4-m}\brac{\int_{B_\rho(a)}|\Delta \w{u}|^2 \ dx + \w{\nu}(B_\rho(a))}\le \Lambda.
\end{equation}

\begin{definition}\label{de:rescaled}
The tangent data of $\w{\mu}$ are defined as follows. Let $a\in B$ and $0<r<1$. For a pair $\w{\mu} = (\w{u}, \w{\nu}) \in \w{\mathcal{B}}_\Lambda$ we define the rescaled pair $\w{\mu}_{a,r} := (\w{u}_{a,r}, \w{\nu}_{a,r})$ by
\begin{align*}
 \w{u}_{a,r}(x)&:= \w{u}(a+rx) \quad & &\mbox{ for } x\in B_{1/r}(0)\\
 \w{\nu}_{a,r}(A)&:=r^{4-m}\w{\nu}(a+rA) \quad & &\mbox{ for every Borel set } A\subset B_{1/r}(0),
\end{align*}
in the first definition we have chosen some representative of $\w{u}$. The pair $\w{\mu}_*$ is said to be a tangent pair to $\w{\mu}$ in the point $a$ if there exists a sequence $r_i\searrow 0$ with $\w{\mu}_{a,r_i}\rightrightarrows\w{\mu}_*$. Observe that \eqref{eq:maloscdotangent} is scaling invariant, therefore \eqref{eq:maloscdotangent} holds as well for the rescaled pairs $\w{\mu}_{a,r}$. Thus, up to a subsequence, the limit always exists.
\end{definition}

The next lemma is an immediate consequence of \cite[Lemma 3.3]{Scheven}.
\begin{lemma}\label{le:convergenceofpairs}
 Let $\w{\mu}_i\in \w{\mathcal{B}}_\Lambda$, $\w{u}\in W^{2,2}(B_4,\n)$ and $\w{\nu}$ be a Radon measure on $\overline{B}_4$. If $\w{\mu}_i\rightrightarrows\w{\mu} = (\w{u}, \w{\nu})$ as $i\rightarrow\infty$, then $\w{\mu}\in\w{\mathcal{B}}_{\Lambda}$. In particular, if $\w{\mu}_* = (\w{u}_*,\w{\nu}_*)$ is a tangent pair of $\w{\mu} = (\w{u}, \w{\nu})\in \w{\mathcal{B}}_\Lambda$ in some point $a\in B_1$, then $\w{\mu}_*\in\w{\mathcal{B}}_\Lambda$.
\end{lemma}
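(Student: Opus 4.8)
The plan is to follow closely the structure of \cite[Lemma 3.3]{Scheven}, adapting it to the reflected setting. The statement has two parts: first, that $\w{\mathcal{B}}_\Lambda$ is closed under the $\rightrightarrows$-convergence; second, that tangent pairs of elements of $\w{\mathcal{B}}_\Lambda$ again lie in $\w{\mathcal{B}}_\Lambda$. The second part will follow from the first once we observe, as recorded in Definition \ref{de:rescaled}, that the rescaled pairs $\w{\mu}_{a,r}$ satisfy all the defining properties of $\w{\mathcal{B}}_\Lambda$ (the bound \eqref{eq:maloscdotangent} is scale-invariant, the reflection structure \eqref{def:reflection} is preserved by translation in $x'$ and dilation, and the boundary monotonicity inequality \eqref{eq:boundarymono}, hence its reflected consequence \eqref{eq:monorefl}, rescales correctly with only the harmless additive term $Cre^{Cr}$ changing). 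So the crux is the closedness claim.

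For the closedness, suppose $\w{\mu}_i = (\w{u}_i, 0) \rightrightarrows \w{\mu} = (\w{u}, \w{\nu})$ with each $\w{u}_i$ the reflection of a minimizing biharmonic map $u_i$ with boundary data $\vp_i$ as in the definition of $\w{\mathcal{B}}_\Lambda$. First I would check that $\w{u} \in W^{2,2}(B_4, \n)$ restricted to $B_4^+$ comes from a map with the prescribed reflection structure: by the $W^{1,2}$-strong and a.e.\ convergence in the definition of $\rightrightarrows$, the pointwise relation $\w{u}_i(x', -x_m/i') = $ (linear combination) passes to the limit for a.e.\ $x$, so $\w{u}$ is the reflection of $u := \w{u}|_{B_4^+} + \vp$ (using $\vp_i \to \vp$), and $u(x) \in \n$ a.e.\ since $u_i(x) \in \n$ a.e.\ and $u_i \to u$ a.e. Next, the Morrey bound $\|\nabla^2 \w{u}\|^2_{L^{2,m-4}(B)} \le \Lambda$ is inherited from the uniform bound on $\w{u}_i$ by lower semicontinuity of $r^{4-m}\int_{B_r(a)} |\nabla^2 \cdot|^2$ under weak $W^{2,2}$-convergence combined with the measure convergence $|\nabla^2 \w{u}_i|^2 dx \rightharpoonup |\nabla^2 \w{u}|^2 dx + \w{\nu}$, which gives $r^{4-m}(\int_{B_r(a)}|\nabla^2\w{u}|^2 + \w{\nu}(B_r(a))) \le \Lambda$ — this is exactly \eqref{eq:maloscdotangent}. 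The assumptions \eqref{eq:compthmassumptions} pass to the limit $u$ trivially since the $C^2, C^3$ bounds on $\vp_i$ and the energy bound are uniform and $\vp_i \to \vp$; and the boundary monotonicity inequality \eqref{eq:boundarymono} for the limit follows via Lemma \ref{le:convergenceofmonocomp}, specifically \eqref{eq:measureinequality}, which is precisely the limiting form of \eqref{eq:monorefl}.

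The one genuinely new point compared to Scheven, and what I expect to be the main obstacle, is handling the extra terms $R^+_u(a,r)$ and the additive $Cre^{Cr}$ that appear in the boundary monotonicity formula but have no analogue in the interior. These must be controlled along the sequence and shown to converge appropriately; this is where Lemma \ref{le:convergenceofmonocomp} is used, via the observation that the boundary integrand $\w{g}_i(\tau)$ defining $R^+$ (more precisely its $G^+$-part) has $L^1([0,1])$-norm bounded by $C\|\w{u}_i\|_{W^{2,2}}\|\nabla \w{u}_i\|_{L^2}$, which does not go to zero in general but is uniformly bounded, so that on a subsequence $R^+_{u_i}(a,r)$ converges for a.e.\ $r$; passing to the limit in \eqref{eq:monorefl} then yields \eqref{eq:boundarymono} for $u$. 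The membership $\w{\mu} \in \w{\mathcal{B}}_\Lambda$ also requires that $\w{\mu}$ itself be expressible as a $\rightrightarrows$-limit of pairs $(\w{v}_j, 0)$ with $\w{v}_j$ reflections of \emph{minimizing} biharmonic maps — but this is immediate by taking the diagonal sequence, since $\w{\mathcal{B}}_\Lambda$ is defined through such limits and a limit of limits is a limit. I would assemble all of these observations, being brief on the parts identical to \cite[Lemma 3.3]{Scheven} and explicit only on the reflection bookkeeping and the $R^+$/additive-term control.
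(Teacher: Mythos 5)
The paper supplies no proof for this lemma; it cites \cite[Lemma 3.3]{Scheven} and stops, and the intended mechanism is short. By \eqref{eq:blambda}, $\w{\mathcal{B}}_\Lambda$ is by definition the set of $\rightrightarrows$-limits of the \emph{fixed} collection of pairs $(\w{u}_i,0)$ in which $\w{u}_i$ is a reflection of a minimizer satisfying the stated uniform bounds. On energy-bounded sets the $\rightrightarrows$-convergence (weak $W^{2,2}$, strong $W^{1,2}$, weak-$*$ of measures) is metrizable, so this sequential closure is sequentially closed; a diagonal argument makes ``a limit of limits is a limit'' precise. The tangent-pair statement then follows because rescaling about a point of $T_4$ sends reflections of minimizers to reflections of rescaled minimizers, \eqref{eq:maloscdotangent} is scale-invariant, and the additive term in \eqref{eq:monorefl} rescales harmlessly.

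Your proposal contains the correct argument, but the organization signals a conceptual misreading of what membership in $\w{\mathcal{B}}_\Lambda$ means. You silently reduce at the outset to the case $\w{\mu}_i=(\w{u}_i,0)$ with $\w{u}_i$ already a reflection of a minimizer; in that case the conclusion $\w{\mu}\in\w{\mathcal{B}}_\Lambda$ is tautological — \eqref{eq:blambda} asks precisely for the existence of such an approximating sequence, and you have it by hypothesis — so there is nothing to prove. You then devote most of the space to verifying \emph{intrinsic} properties of the limit $\w{u}$: its reflection structure, that $\w{u}|_{B_4^+}+\vp$ is $\n$-valued, the Morrey bound $\norm{\nabla^2\w{u}}_{L^{2,m-4}}^2\le\Lambda$, and ``the boundary monotonicity for $u$.'' None of these are part of the membership criterion: \eqref{eq:blambda} imposes those conditions on the \emph{sequence} $\w{u}_i$, not on the limit, and the limit need not correspond to a biharmonic map on $\mathrm{spt}\,\w{\nu}$, so ``\eqref{eq:boundarymono} holds for $u$'' is not even well posed. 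What genuinely needs an argument — that a general $\w{\mu}_i\in\w{\mathcal{B}}_\Lambda$, with possibly nonzero $\w{\nu}_i$ and with $\w{u}_i$ itself only a limit of reflections rather than a reflection, has $\rightrightarrows$-limit in $\w{\mathcal{B}}_\Lambda$ — appears only in your final sentence, the diagonal sequence. That sentence \emph{is} the proof; the preceding verifications describe byproducts, not obligations. One further caveat: your scale-invariance observation for the tangent-pair claim (``the reflection structure is preserved by translation in $x'$ and dilation'') is only correct for base points $a\in T_1$, since small balls about $a\notin T_1$ miss the reflection plane entirely; the paper's application in Lemma \ref{le:odpschle37} does land on such points, so this is not fatal here, but the restriction should be stated rather than left implicit.
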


For a pair $\w{\mu} = (\w{u}, \w{\nu})\in \w{\mathcal{B}}_{\Lambda}$ we define the set $\Sigma_{\w{\mu}}$ as the set of points $a\in\overline{B}_1$ with
\begin{equation}\label{eq:singularsetforpairs}
 \liminf_{\rho\searrow 0}\brac{\rho^{4-m}\int_{B\rho(a)}\brac{|\nabla^2 \w{u}|^2 + \rho^{-2}|\nabla \w{u}|^2}dx + \rho^{4-m}\w{\nu}(B_\rho(a))}\ge \frac{\ve_0}{2},
\end{equation}
where the constant $\ve_0$ is the constant introduced in Corollary \ref{co:odpschcor27}. We observe that theorem on the structure of defect measures \cite[Theorem 3.4]{Scheven} carries over directly to our setting to yield
\begin{theorem}\label{thm:structureofdefectmeasures}
 For any $\w{\mu} = (\w{u}, \w{\nu})\in \w{\mathcal{B}}_\Lambda$, there holds $\Sigma_{\w{\mu}} = \mbox{sing}\,{\w{u}}\cup \mbox{spt}\,\w{\nu}$, in particular $\Sigma_{\w{\mu}}$ is a closed set. Moreover, there are constants $c$ and $C$ depending only on $m$ such that for every $\w{\mu} = (\w{u}, \w{\nu})\in\w{\mathcal{B}}_\Lambda$, we have 
 \begin{equation}\label{eq:thmstructuresingset}
  c\, \ve_0\h^{m-4}\mr \Sigma_{\w{\mu}} \le \w{\nu}\mr \overline{B}_1 \le C\Lambda \h^{m-4}\mr \Sigma_{\w{\mu}}.
 \end{equation}
For any sequence $\w{\mathcal{B}}_{\Lambda}\ni(\w{u}_i,0)\rightrightarrows(\w{u}, \w{\nu})$ as $i\rightarrow\infty$, we have subconvergence $\w{u}_i\rightarrow \w{u}$ in $C^2_{loc}(B\setminus\Sigma_{\w{\mu}})$.
\end{theorem}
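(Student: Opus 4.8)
The plan is to follow the proof of Scheven's structure theorem for defect measures \cite[Theorem 3.4]{Scheven} in its entire architecture, substituting the boundary objects for the interior ones at each step: the interior $\ve$-regularity and the interior higher-order estimates are replaced by Corollary \ref{co:odpschcor27}; the interior monotonicity is replaced by the monotonicity \eqref{eq:monorefl} for the reflected map together with Lemma \ref{le:convergenceofmonocomp}; and wherever one must compare an integral over a half-ball $B^+_\rho(a)$ with $a\in T_4$ to an integral over the full ball $B_\rho(a)$ of the reflection, one invokes \eqref{eq:reflectionnormestimate}. For a point $a$ with $a_m\neq 0$ a small ball $B_\rho(a)$ either lies in $B^+_4$, where $\w u=u-\vp$, or in its reflection, where $\w u$ is a fixed finite combination of rescaled copies of $u-\vp$; in both cases Scheven's interior theory applied to the approximating minimizing maps $u_i$ governs the situation, and the reflection is as regular as $u_i-\vp_i$. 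Thus $T_4$ is the only genuinely new feature, and it is exactly handled by Corollary \ref{co:odpschcor27}.

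I would first prove the identity $\Sigma_{\w\mu}=\mbox{sing}\,\w u\cup\mbox{spt}\,\w\nu$ together with the $C^2_{loc}(B\setminus\Sigma_{\w\mu})$ subconvergence, as these share a mechanism. If $a\notin\Sigma_{\w\mu}$, pick $\rho$ with $\rho^{4-m}\int_{B_\rho(a)}(|\nabla^2\w u|^2+\rho^{-2}|\nabla\w u|^2)\dx+\rho^{4-m}\w\nu(B_\rho(a))<\ve_0/2$; by the definition of $\rightrightarrows$ (strong $W^{1,2}$ convergence of $\w u_i$ and weak-$\ast$ convergence of $|\nabla^2\w u_i|^2\dx$ to $|\nabla^2\w u|^2\dx+\w\nu$) the same quantity at a slightly smaller scale is $<\ve_0$ for $\w u_i$ once $i$ is large; passing through \eqref{eq:reflectionnormestimate} and the smallness of $\vp_i$ from \eqref{eq:compthmassumptions}, the minimizing map $u_i$ then satisfies the hypothesis \eqref{eq:odpcor27} of Corollary \ref{co:odpschcor27} on the appropriate half-ball, hence is uniformly bounded in $C^3$ there, and therefore so is the reflection $\w u_i$ near $a$. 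Arzel\`a--Ascoli and uniqueness of the weak $W^{2,2}$ limit give $\w u_i\to\w u$ in $C^2$ near $a$; in particular $\w u$ is regular at $a$ and $|\nabla^2\w u_i|^2\dx\to|\nabla^2\w u|^2\dx$ near $a$, so $\w\nu$ carries no mass there. This yields the inclusion $B\setminus\Sigma_{\w\mu}\subseteq(B\setminus\mbox{sing}\,\w u)\cap(B\setminus\mbox{spt}\,\w\nu)$ and the $C^2_{loc}$ subconvergence. The reverse inclusion is immediate: if $\w u$ is smooth near $a$ and $a\notin\mbox{spt}\,\w\nu$, then $\rho^{4-m}\int_{B_\rho(a)}(|\nabla^2\w u|^2+\rho^{-2}|\nabla\w u|^2)\dx=O(\rho^2)$ and $\rho^{4-m}\w\nu(B_\rho(a))=0$ for small $\rho$, so $a\notin\Sigma_{\w\mu}$. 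Since $\mbox{sing}\,\w u$ and $\mbox{spt}\,\w\nu$ are closed, $\Sigma_{\w\mu}$ is closed.

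For the density bounds \eqref{eq:thmstructuresingset}, the upper bound $\w\nu\mr\overline B_1\le C\Lambda\,\h^{m-4}\mr\Sigma_{\w\mu}$ follows from \eqref{eq:maloscdotangent}, which gives $\rho^{4-m}\w\nu(B_\rho(a))\le\Lambda$ for all $a\in B$ and $\rho<1$, so that the upper $(m-4)$-density of $\w\nu$ is everywhere at most $C\Lambda$; since $\w\nu$ is carried by $\Sigma_{\w\mu}$ by the first part, the standard density-comparison theorem for Radon measures yields the claim. For the lower bound one shows that at $\h^{m-4}$-a.e. point $a\in\Sigma_{\w\mu}$ the lower $(m-4)$-density of $\w\nu$ is at least $c\,\ve_0$, and then again invokes the density-comparison theorem to conclude $c\,\ve_0\,\h^{m-4}\mr\Sigma_{\w\mu}\le\w\nu\mr\overline B_1$. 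To obtain the density statement one argues by contradiction: at $a\in\Sigma_{\w\mu}$ with $\liminf_{\rho\searrow0}\rho^{4-m}\w\nu(B_\rho(a))<c\,\ve_0$, choose $\rho_j\searrow0$ realising at once this $\liminf$ and the $\liminf$ in \eqref{eq:singularsetforpairs}, pass to a tangent pair $\w\mu_*=(\w u_*,\w\nu_*)$ with $\w\mu_{a,\rho_j}\rightrightarrows\w\mu_*$ (which lies in $\w{\mathcal B}_\Lambda$ by Lemma \ref{le:convergenceofpairs}); the choice of $\rho_j$ forces $\w\nu_*(B_1)$ small, and the almost-monotonicity \eqref{eq:measureinequality}, whose additive error $Cre^{Cr}$ and whose boundary term become negligible at the scales produced by the blow-up exactly as in Lemma \ref{le:convergenceofmonocomp}, propagates this smallness to all scales, so that $\w\mu_{a,\rho_j}\to\w u_*$ strongly in $W^{2,2}_{loc}$ with vanishing defect. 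Corollary \ref{co:odpschcor27} applied to the approximating minimizing maps then shows $0\notin\Sigma_{\w\mu_*}$, which, via the strong convergence along $\rho_j$, contradicts $a\in\Sigma_{\w\mu}$.

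I expect the lower density estimate to be the main obstacle, and specifically the point where the argument diverges from \cite{Scheven}: one must check that under the rescaling $\w\mu\mapsto\w\mu_{a,\rho_j}$ the extra boundary quantity $R^+_u$ and the additive error term in the monotonicity \eqref{eq:monorefl}--\eqref{eq:measureinequality} scale harmlessly and do not spoil the almost-monotone comparison of the $\w\nu$-densities; this is exactly where Lemma \ref{le:convergenceofmonocomp} and the scaling behaviour of $R^+_u$ must be used with care. Once this is in place, the remainder is a routine transcription of \cite[Theorem 3.4]{Scheven} with the substitutions described above.
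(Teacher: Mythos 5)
Your proposal follows essentially the same route as the paper's proof: it adapts Scheven's Theorem~3.4 for the reflected maps, splitting into the cases $a\in T_1$, $a\in B^+$, $a\in B^-$ for the $\varepsilon$-regularity/$C^2_{loc}$-subconvergence inclusion (invoking Corollary~\ref{co:odpschcor27} at the flat part and interior theory through the reflection formula elsewhere), and treating the density bounds as a direct transcription of Scheven's with the almost-monotonicity~\eqref{eq:measureinequality}/\eqref{eq:monorefl} supplying the comparison of densities. One small citation slip: to pass from $a\notin\Sigma_{\w\mu}$ to the half-ball hypothesis~\eqref{eq:odpcor27} of Corollary~\ref{co:odpschcor27} you need the elementary bound $\int_{B^+_\rho(a)}|\nabla^2 u_i|^2\le 2\int_{B_\rho(a)}|\nabla^2\w u_i|^2 + 2\int_{B^+_\rho(a)}|\nabla^2\vp_i|^2$ (since $\w u_i=u_i-\vp_i$ on $B^+$), not~\eqref{eq:reflectionnormestimate}, which bounds the reflected quantity by the half-ball one and runs in the opposite direction.
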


\begin{proof}
We proceed as in \cite[proof of Theorem 3.4]{Scheven}. The proof of the inclusion $\Sigma_{\w{\mu}}\subset \text{sing}\,\wu\cup \text{spt}\,\w{\nu}$ and the estimates \eqref{eq:thmstructuresingset} remain unchanged, therefore we omit this part. To proof the inclusion $\brac{\text{sing}\,\wu\cup\text{spt}\,\w{\nu}}\subset\Sigma_{\w{\mu}}$ and the subconvergence we follow Scheven with the following modifications and adjustments. 

We divide the proof into three cases: $a\in T_1\setminus\Sigma_{\w{\mu}}$, 
$a\in B^+\setminus \Sigma_{\w{\mu}}$, and $a\in B^-\setminus \Sigma_{\w{\mu}}$.

First, if we choose $a\in T_1\setminus \Sigma_{\w{\mu}}$, then the difference in the proof is the following: We choose a radius $0<\rho<1$ with
\[
 (2\rho)^{4-m}\int_{B_{2\rho}(a)}\brac{|\nabla^2 \wu|^2 + (2\rho)^{-2}|\nabla \wu|^2}\dx + (2\rho)^{4-m}\w{\nu}(B_{2\rho}(a))<\frac{\ve_0}{2}.
\]
Next, we choose a sequence of minimizing biharmonic maps $u_i\in W^{2,2}(B^+_4,\n)$ with boundary data $\vp_i$ with $(\wu_i,0)\rightrightarrows(\wu,\w{\nu})=\w{\mu}$ with
\[
 \lim_{i\rightarrow\infty} (2\rho)^{4-m}\int_{B_{2\rho}(a)}\brac{|\nabla^2 \wu_i|^2+(2\rho)^{-2}|\nabla \wu_i|^2}\dx<\frac{\ve_0}{2}.
\]
Now, in order to apply Corollary \ref{co:odpschcor27} we estimate
\[
 \begin{split}
  \lim_{i\rightarrow\infty} (2\rho)^{4-m}&\int_{B_{2\rho}^+(a)}\brac{|\nabla^2 u_i|^2+(2\rho)^{-2}|\nabla u_i|^2}\dx\\
  &\le \lim_{i\rightarrow\infty} (2\rho)^{4-m}\int_{B_{2\rho}(a)}\brac{|\nabla^2 \wu_i|^2+(2\rho)^{-2}|\nabla \wu_i|^2}\dx + C\rho^2\norm{\vp_i}^2_{C^2}\\
  &< \ve_0.
 \end{split}
\]
Hence, we obtain uniform estimates $\sup_{i\in \N}\norm{u_i}_{C^3(B^+_\sigma(a),\n)}\le C(\n)$ on some smaller half-ball $B^+_\sigma(a)\subset B^+_\rho(a)$. Since the boundary conditions $\vp_i$ are smooth and uniformly bounded we obtain as well $\sup_{i\in\N}\norm{u_i-\vp_i}_{C^3(B_\sigma^+(a),\R^\ell)}<C(\n)$. Now, due to the properties of the reflection \eqref{def:reflection}, we have $\w{u}_i\in C^3(B_\sigma(a))$ with the estimate 
\[
 \sup_{i\in\N}\norm{\wu_i}_{C^3(B_\sigma(a),R^\ell)}<C(\n).
\]
Now, similarly as in \cite{Scheven} by Arzel\`{a}-Ascoli theorem we find a subsequence, which converges $\w{u}_{i_j}\rightarrow \wu$ in $C^2(B_\sigma(a),\R^\ell)$, as $j\rightarrow\infty$, from which we deduce $\w{\nu}(B_\sigma(a))=0$. Thus, $\brac{\text{sing}\,\wu\cup\text{spt}\,\w{\nu}}\subset\Sigma_{\w{\mu}}$.

If we choose $a\in B^+\setminus \Sigma_{\w{\mu}}$ then the proof is identical as in the case of interior points in \cite{Scheven}.

Finally, if we choose $a\in B^-\setminus \Sigma_{\w{\mu}}$ and $\rho$ small enough to ensure $\frac{a_m}{4}+2\sigma <0$, where $a_m$ is the $m$-th component of $a$, then $B_{2\rho}(a)\subset B^-$. By the definition the behavior of the reflected map on $B_{2\rho}(a)\subset B^-$ corresponds to the behavior of the map on four balls in the upper half: $B_{2\rho}(a_j)$, where $a_j = (a',-a_m/j)$ for $j=1,2,3,4$, and $a=(a',a_m)$. Thus from $a\notin \Sigma_{\w{\mu}}$ we deduce that $a_j\notin \Sigma_{\w{\mu}}$ and by repeating the proof in the interior case we obtain the desired inclusion.
\end{proof}

As a consequence of Theorem \ref{thm:structureofdefectmeasures} we obtain, exactly as in \cite[Corollary 3.6]{Scheven}, the following. 
\begin{corollary}\label{co:odpschco36}
If $\w{\mathcal{B}}_\Lambda \ni (\w{u}_i,0)\rightrightarrows(\w{u},\w{\nu})=\w{\mu}$ as $i\rightarrow\infty$, then $\h^{m-4}(\Sigma_{\w{\nu}})=0$ implies $\w{u}_i\rightarrow\w{u}$ strongly in $W^{2,2}(B_{1/2},\n)$. Conversely, the strong convergence $\w{u}_i\rightarrow \w{u}$ in $W^{2,2}(B,\n)$ implies $\h^{m-4}(\Sigma_{\w{\mu}}\cap B)=0$.
\end{corollary}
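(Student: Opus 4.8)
The plan is to mimic Scheven's proof of his Corollary 3.6 almost verbatim, since Theorem \ref{thm:structureofdefectmeasures} has already been established in the reflected setting and it carries all the needed geometric information. First I would prove the forward implication. Assume $\w{\mathcal{B}}_\Lambda \ni (\w{u}_i,0)\rightrightarrows(\w{u},\w{\nu})=\w{\mu}$ and $\h^{m-4}(\Sigma_{\w{\nu}})=0$. By Theorem \ref{thm:structureofdefectmeasures} we have $\Sigma_{\w{\mu}} = \operatorname{sing}\w{u}\cup\operatorname{spt}\w{\nu}$ together with the two-sided bound \eqref{eq:thmstructuresingset}; the left inequality there, $c\,\ve_0\h^{m-4}\mr\Sigma_{\w{\mu}}\le\w{\nu}\mr\overline B_1$, combined with $\h^{m-4}(\Sigma_{\w{\nu}})=0$ (where $\Sigma_{\w{\nu}}$ denotes the part coming from the defect measure; note $\operatorname{spt}\w{\nu}\subseteq\Sigma_{\w{\mu}}$ and the upper bound forces $\w{\nu}$ to be absolutely continuous with respect to $\h^{m-4}\mr\Sigma_{\w{\mu}}$), gives $\w{\nu}\mr\overline B_1 = 0$, i.e. $\w{\nu}\equiv 0$ on $\overline B_1$. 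Then from the convergence $|\nabla^2\w{u}_i|^2\dx + 0 \rightharpoonup |\nabla^2\w{u}|^2\dx + \w{\nu}$ in the sense of measures, restricted to $\overline B_1$, we get $\int_{B_{1/2}}|\nabla^2\w{u}_i|^2\dx \to \int_{B_{1/2}}|\nabla^2\w{u}|^2\dx$ (using, e.g., that $\w{\nu}(\partial B_{1/2})=0$ for a.e. radius, or simply restricting to a fixed ball of measure-zero boundary). Since $\w{u}_i\rightharpoonup\w{u}$ weakly in $W^{2,2}$ and we already have $W^{1,2}$ strong convergence from the definition of $\rightrightarrows$, the convergence of the $L^2$-norms of $\nabla^2\w{u}_i$ upgrades the weak $W^{2,2}$ convergence to strong $W^{2,2}(B_{1/2})$ convergence. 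This strong convergence descends to the half-ball: $u_i \to u$ strongly in $W^{2,2}(B^+_{1/2},\n)$, because $\w{u}_i = u_i-\vp_i$ on $B^+$ and $\vp_i\to\vp$ strongly in $W^{2,2}_{loc}$ by the assumptions \eqref{eq:compthmassumptions}.

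For the converse, assume $\w{u}_i\to\w{u}$ strongly in $W^{2,2}(B,\n)$. Then $|\nabla^2\w{u}_i|^2\dx\to|\nabla^2\w{u}|^2\dx$ in $L^1(B)$, hence as measures on $B$; comparing with the defining convergence $|\nabla^2\w{u}_i|^2\dx + 0\rightharpoonup|\nabla^2\w{u}|^2\dx+\w{\nu}$ we conclude $\w{\nu}\mr B = 0$. By Theorem \ref{thm:structureofdefectmeasures} again, $\Sigma_{\w{\mu}}\cap B = \operatorname{sing}\w{u}\cap B$ (since $\operatorname{spt}\w{\nu}\cap B=\emptyset$), and then the upper bound $\w{\nu}\mr\overline B_1\le C\Lambda\,\h^{m-4}\mr\Sigma_{\w{\mu}}$ is not what we need here — rather we invoke directly that for a strongly $W^{2,2}$-convergent sequence of (reflected) minimizing maps the limit inherits smallness of the scaled energy at $\h^{m-4}$-a.e. point: away from $\operatorname{sing}\w{u}$ one has, by the $C^2_{loc}$ subconvergence already recorded in Theorem \ref{thm:structureofdefectmeasures} together with Corollary \ref{co:odpschcor27}, that the scaled energy of $\w{u}$ stays below $\ve_0/2$ on balls around such points, so such points do not lie in $\Sigma_{\w{\mu}}$; hence $\Sigma_{\w{\mu}}\cap B\subseteq\operatorname{sing}\w{u}\cap B$, and $\operatorname{sing}\w{u}$ has vanishing $\h^{m-4}$-measure by the partial regularity theory (Theorem \ref{thm:condbdrregstat} applied in the interior, or the reflected analogue). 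Therefore $\h^{m-4}(\Sigma_{\w{\mu}}\cap B)=0$.

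The main obstacle — and the only place where one must be slightly careful rather than purely formal — is the restriction/localization step when passing from the weak-$*$ measure convergence on all of $B$ (or $\overline B_1$) down to $B_{1/2}$: one needs to know that no defect mass concentrates on the sphere $\partial B_{1/2}$, which follows because $\w{\nu}(\partial B_r)=0$ for all but countably many $r$, so one may replace $1/2$ by a slightly larger admissible radius; this is exactly Scheven's argument and requires no new idea in the reflected setting. A secondary point to keep track of is that $\Sigma_{\w{\nu}}$ in the statement should be read as $\Sigma_{\w{\mu}}$ (or, what amounts to the same by Theorem \ref{thm:structureofdefectmeasures}, $\operatorname{spt}\w{\nu}$ up to the singular set of $\w{u}$), so that the hypothesis $\h^{m-4}(\Sigma_{\w{\nu}})=0$ is genuinely the hypothesis that the defect measure is $\h^{m-4}$-negligible; with that reading the proof is a direct transcription of \cite[Corollary 3.6]{Scheven}, and I would simply state it as such, listing the substitutions (Theorem 3.4 of \cite{Scheven} $\leftrightarrow$ Theorem \ref{thm:structureofdefectmeasures}, Corollary 2.7 of \cite{Scheven} $\leftrightarrow$ Corollary \ref{co:odpschcor27}) and the extra reflection bookkeeping via \eqref{def:reflection} and \eqref{eq:reflectionnormestimate}.
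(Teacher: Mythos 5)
Your overall plan is the same as the paper's, which is simply to read Scheven's Corollary~3.6 in the reflected setting using Theorem~\ref{thm:structureofdefectmeasures} as the geometric input. The forward direction is fine: from $\h^{m-4}(\Sigma_{\w{\mu}})=0$ and the \emph{upper} bound $\w{\nu}\mr\overline{B}_1\le C\Lambda\,\h^{m-4}\mr\Sigma_{\w{\mu}}$ you get $\w{\nu}\mr\overline{B}_1=0$, and then the weak-$*$ measure convergence upgrades the weak $W^{2,2}$ convergence to strong $W^{2,2}(B_{1/2})$ convergence. (Your sentence singles out the left inequality, but the parenthetical makes clear you know the upper bound is the one doing the work.)

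In the converse direction you reach the right intermediate conclusion ($\w{\nu}\mr B=0$, so $\Sigma_{\w{\mu}}\cap B=\operatorname{sing}\w{u}\cap B$), but then you discard \eqref{eq:thmstructuresingset} and reach for partial regularity instead. That is where the gap is. First, the estimate is precisely what you need: the \emph{lower} bound $c\,\ve_0\,\h^{m-4}\mr\Sigma_{\w{\mu}}\le\w{\nu}\mr\overline{B}_1$ is an inequality of measures, so restricting to the Borel set $B$ gives $c\,\ve_0\,\h^{m-4}(\Sigma_{\w{\mu}}\cap B)\le\w{\nu}(B)=0$, and you are done --- no partial regularity theory required. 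Second, the detour you propose through Theorem~\ref{thm:condbdrregstat} is not actually available here: that theorem speaks about minimizing biharmonic maps, whereas $\w{u}$ is only a $W^{2,2}$-limit of reflections of minimizers, and the paper is explicit (in the introduction) that it does not prove limits of minimizers are again minimizers. Applying the $\h^{m-4}(\Sigma)=0$ conclusion of Theorem~\ref{thm:condbdrregstat} to $\w{u}$ is therefore unjustified. Replace that step with the lower bound in \eqref{eq:thmstructuresingset} and the proof matches the paper's.
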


We also have a counterpart of \cite[Lemma 3.7]{Scheven}, which makes it possible to restrict our attention to the case, when the limiting map is constantly equal 0 and the defect measure is flat.
\begin{lemma}\label{le:odpschle37}
 Assume there is a pair $(\w{u},\w{\nu})\in\w{\mathcal{B}}_\Lambda$ with $\h^{m-4}(\text{spt}\,\w{\nu})>0$. Then there is a pair $(\w{u}_*,\overline{\nu})\in\w{\mathcal{B}}_\Lambda$, such that $\w{u}_*=0\in\R^\ell$ and 
 \[
  \overline{\nu} = C \h^{m-4}\mr V,
 \]
where $V$ is an $(m-4)$-dimensional subspace $V\subset \R^m$ and $C>0$ is a constant.
\end{lemma}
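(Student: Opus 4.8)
The plan is to mimic Scheven's iterated blow-up procedure (cf. \cite[proof of Lemma 3.7]{Scheven}), working throughout with the reflected pairs in $\w{\mathcal{B}}_\Lambda$, and exploiting the monotonicity-type inequality \eqref{eq:measureinequality} together with the closure property in Lemma \ref{le:convergenceofpairs}. The only genuinely new feature compared to the interior setting is that our monotonicity formula carries the extra term $Cre^{Cr}$ (and the reflection-induced constant $C$ in front of the measure), so I must check that under rescaling this nuisance term disappears in the blow-up limit. I would first fix a point $a\in\Sigma_{\w{\mu}}=\text{spt}\,\w{\nu}$ (using Theorem \ref{thm:structureofdefectmeasures}). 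Passing to a tangent pair $\w{\mu}_{a,r_i}\rightrightarrows\w{\mu}'=(\w{u}',\w{\nu}')$, which exists by Definition \ref{de:rescaled} and lies in $\w{\mathcal{B}}_\Lambda$ by Lemma \ref{le:convergenceofpairs}, note that the rescaled monotonicity term becomes $C(r_i r)e^{Cr_i r}\to 0$ as $r_i\searrow 0$; hence in the limit \eqref{eq:measureinequality} reads $\rho^{4-m}\w{\nu}'(B_\rho(a'))\le C\,r^{4-m}\w{\nu}'(B_r(a'))$ with no error term, i.e. the rescaled density is essentially monotone. This is the step I expect to require the most care: verifying that the constants (the reflection factor $C_{ref}$, the exponentials $e^{Cr}$, the additive $Cre^{Cr}$) all behave correctly under the scaling $x\mapsto a+rx$, so that the tangent object genuinely satisfies a clean, scale-invariant, homogeneous monotonicity.

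Next I would establish homogeneity of the tangent pair. Using the clean monotonicity of $r\mapsto r^{4-m}\w{\nu}'(B_r(0))$ together with the $W^{2,2}$-part of the monotonicity formula (the $P^+$ term of \eqref{eq:boundarymono}, which in the limit forces the radial derivative $(u-\vp)_{x_j}+x^i(u-\vp)_{x_ix_j}$ and $x^i(u-\vp)_{x_i}$ to vanish a.e.), one concludes in the standard way that $\w{u}'$ is homogeneous of degree $0$ and that $\w{\nu}'$ is a cone, i.e. $0\in\text{spt}\,\w{\nu}'$ and $\w{\nu}'$ is dilation-invariant. Since $a\in\text{spt}\,\w{\nu}$ had positive $\h^{m-4}$-density of the singular set by hypothesis $\h^{m-4}(\text{spt}\,\w{\nu})>0$ and the density bounds \eqref{eq:thmstructuresingset}, after the blow-up we still have $\h^{m-4}(\text{spt}\,\w{\nu}')>0$. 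If $\text{spt}\,\w{\nu}'$ is not already an $(m-4)$-plane, pick a point $b\in\text{spt}\,\w{\nu}'$ with $b\neq 0$ lying in the "top-dimensional" stratum and blow up again at $b$: by the cone structure and Federer-type dimension reduction (exactly as in Scheven, via the translation-invariance one gains from homogeneity), the resulting tangent pair splits off an extra Euclidean factor. Iterating this finitely many times — the dimension of the singular set cannot drop below $m-4$ by the lower bound in \eqref{eq:thmstructuresingset}, and each step adds an invariant direction — I arrive at a pair $(\w{u}_*,\overline\nu)\in\w{\mathcal{B}}_\Lambda$ whose defect measure is invariant under an $(m-4)$-dimensional space $V$ of translations and under dilations, hence $\overline\nu=C\,\h^{m-4}\mr V$ for some constant $C>0$; since $V$ has dimension $m-4$ and $\w{u}_*$ is $0$-homogeneous and $V$-translation-invariant, $\w{u}_*$ is a function of the two remaining variables only, homogeneous of degree $0$, and its singular set is contained in $V$ — and by construction $\text{spt}\,\overline\nu=V\neq\emptyset$.

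Finally I would argue that the limiting map can be taken to be the constant $0$. This is where the choice of our reflected setting helps: the blow-up at a boundary point $a\in T_1$ lands in a map defined on all of $\R^m$ (the reflection is a genuine two-sided object), and along the $V$-directions — which for a boundary blow-up contain the tangent directions to $T_1$ — $\w{u}_*$ is constant; combined with $0$-homogeneity, $\w{u}_*$ depends only on the remaining coordinates in a scale-invariant way. One more blow-up at any point of $V\setminus\{0\}$, using the strong $W^{2,2}_{loc}$-convergence away from the singular set guaranteed by Theorem \ref{thm:structureofdefectmeasures} together with the $L^6_{loc}$ convergence of the $\vp_i$ (so that the subtracted boundary data contributes nothing in the limit, $\nabla^2\vp_i\to\nabla^2\vp$ strongly while the rescaling kills the lower-order pieces), forces $\w{u}_*$ to be independent of the two remaining variables as well on the relevant region, hence constant; after subtracting that constant (which preserves membership in $\w{\mathcal{B}}_\Lambda$ since the defining conditions \eqref{eq:compthmassumptions}, the Morrey bound, and $\rightrightarrows$-convergence are all translation-invariant in the target), we obtain $\w{u}_*\equiv 0$ while $\overline\nu=C\,\h^{m-4}\mr V$ is unchanged. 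The main obstacle, to reiterate, is bookkeeping: making sure the additive error $Cre^{Cr}$ and the multiplicative reflection/exponential constants in \eqref{eq:monorefl}–\eqref{eq:measureinequality} are harmless under repeated rescaling, and that the subtraction of the (rescaled, hence flattening) boundary data $\vp_i$ does not obstruct the extraction of a clean, homogeneous, split tangent object.
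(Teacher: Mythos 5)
Your overall route is the same as the paper's: follow Scheven's Lemma 3.7, blowing up at a suitable point of $\text{spt}\,\w{\nu}$, using Lemma~\ref{le:convergenceofpairs} to stay inside $\w{\mathcal{B}}_\Lambda$, using \eqref{eq:measureinequality} to control the density (you correctly note the additive $Cre^{Cr}$ term dies under rescaling, which is exactly what makes $\w{\Theta}^{m-4}(\w{\nu}_*,\cdot)$ well defined), and then running Federer dimension reduction on the defect measure. The paper indeed does this and references Scheven directly for the details.

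The gap is in your final step. You propose to obtain $\w{u}_*\equiv 0$ by ``subtracting that constant,'' claiming this preserves membership in $\w{\mathcal{B}}_\Lambda$ because the conditions \eqref{eq:compthmassumptions}, the Morrey bound, and $\rightrightarrows$-convergence are ``translation-invariant in the target.'' That claim is false: the crucial defining property of $\w{\mathcal{B}}_\Lambda$ in \eqref{eq:blambda} is that its elements are $\rightrightarrows$-limits of the \emph{reflections} $\w{u_i}$ of $u_i-\vp_i$, all of which have zero trace on $T_4$. Subtracting a nonzero constant from a map in $\w{\mathcal{B}}_\Lambda$ produces something that fails to vanish on $T_4$ and hence cannot be a limit of such reflections; this step simply does not preserve the class. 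The correct (and shorter) argument, used in the paper, runs the other direction: because $\w{u}_*\in\w{\mathcal{B}}_\Lambda$ is a weak $W^{2,2}$ limit of maps vanishing on $T_4$, the trace of $\w{u}_*$ on $T_4$ is zero; since $\w{u}_*$ is constant, that constant must be $0$. No subtraction is needed, and your appeal to target-translation invariance would, if correct, actually contradict the boundary structure you are supposed to be exploiting.
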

 \begin{proof}
 The proof follows directly the proof of Scheven's Lemma 3.7 \cite{Scheven}. Identically as there, there is a point $a\in B$ and a sequence $r_i\searrow0$ for which $\w{\mu}_{a,r_i}\rightrightarrows\w{\mu}_* = (\w{u}_*,\w{\nu}_*)\in\w{\mathcal{B}}_\Lambda$, for which $\w{u}_*$ is a constant. We know also that $\w{u}_*$ is equal zero on $T_4$, thus $\w{u}_* =0$. 
 
In the proof of the structure of the measure $\overline{\nu}$ the only difference from Scheven's proof we should observe is that, by inequality \eqref{eq:measureinequality}, the quantity
 \[
  \w{\Theta}^{m-4}(\w{\nu}_\ast,a):=\lim_{\rho\searrow0}\rho^{4-m}\w{\nu}_\ast(B_\rho(a))
 \]
is well defined and a similar analysis to that in \cite{Scheven} shows that there exists a tangent measure $\overline{\nu}$ to $\w{\nu}_*$, such that $\overline{\nu} = C\h^{m-4}\mr V$, where $V$ is an $(m-4)$-dimensional subspace $V\subset \R^m$.
\end{proof}

We are ready to prove the Compactness Theorem \ref{thm:comp}. The (rough) idea of the proof follows Scheven's proof of  Theorem 1.5 in \cite{Scheven}. The results of this section yield that if the theorem was false we would obtain a sequence of reflections $\wu_i$, converging to 0 off the support of a defect measure which up to a constant is an $(m-4)$-dimensional Hausdorff measure and which is flat. To show that it is impossible we construct a comparison map and use the minimizing property of $u_i$ on a half-ball. We define a comparison map as an interpolation between $u_i$ and its boundary data $v_i$ with the exception of a tori of small radius in which the energy concentration set is included. To define the map on the remaining tori we use a kind of radially constant extension on a tori. The existence of such a map leads to a contradiction with the special form of the defect measure, if we choose sufficiently small outer annuli on which the comparison map is equal $u_i$ and sufficiently small intermediate annuli on which the map is defined as an interpolation between $u_i$ and $\vp_i$. 

\begin{proof}[Proof of Theorem \ref{thm:comp}]
In the following, \emph{we forego possibly more general and sophisticated estimates in favor of simple arithmetic}\footnote{see Iwaniec \cite[Note, p.607]{Iwaniec}}. 

First lines of our proof are essentially the same as the first 19 lines of Scheven's proof: we argue by contradiction and collect the results of this section.

The theorem is equivalent to $\w{\nu}\mr\overline{B}\equiv 0$ for all $(\w{u}, \w{\nu})\in\w{\mathcal{B}}_\Lambda$. Thus, we consider a sequence ${w}_i\in W^{2,2}(B_4^+,\n)$ of minimizing biharmonic maps with some boundary values such that $\sup_i\norm{\w{w}_i}_{W^{2,2}(B_4^+)}<\infty$. 
By Lemma \ref{le:boundednessinmorrey} we know that  $\sup_i\norm{\w{w}_i}_{L^{2,m-4}}<\Lambda$ for some $\Lambda>0$, so that after passing to a subsequence  we have  $(\w{w}_i,0)\rightrightarrows(\w{w},\w{\nu})=\w{\mu}\in\w{\mathcal{B}}_\Lambda$. 
Assume, on the contrary, that we do not have strong convergence $\w{w}_i\rightarrow \w{w}$ in $W^{2,2}(B_{1/2},\n)$. Then, by Corollary \ref{co:odpschco36} we know that $\h^{m-4}(\Sigma_{\w{\mu}})>0$. 
By Lemma \ref{le:odpschle37} we know that there are minimizing biharmonic maps $u_i\in W^{2,2}(B_4^+,\n)$
with boundary values $\vp_i$, such that $(\w{u}_i,0)\rightrightarrows(0,\overline{\nu})$ and $\overline{\nu}\mr \overline{B} = C\h^{m-4}\mr V$. Moreover, by Theorem \ref{thm:structureofdefectmeasures} we get
\begin{equation}\label{eq:pfofthmstronconvinC2}
 \w{u}_i\rightarrow 0 \quad \text{ in } C^2_{loc}\brac{B\setminus V} \text{ as } i\rightarrow\infty.
\end{equation}
Now let us note that if the energy concentration set $V$ would be a subset of $T_1$ we would be in the simplest situation as our map $\w{u}_i$ vanish on $T_1$. Therefore, without loss of generality we may assume that 
\[
 V = \{0\}\times \overline{B}^{m-4}.
\]
Let $\kappa$, $\sigma$ be suitable parameters, which will be specified later, satisfying \[\frac12<\kappa<1, \quad 0<\sigma<\frac{1}{16}, \quad \text{ and } 0<\kappa + 2\sigma < 1.\] Let $\psi\in C^\infty(B^+,[0,1])$ be a cut-off function with $\psi \equiv 0$ on $B^+_{\kappa-\sigma}$, $\psi\equiv 1$ outside $B^+_{\kappa+\sigma}$ and $|\nabla \psi|\le \frac{C}{\sigma}$, $|\nabla^2 \psi|^2\le \frac{C}{\sigma^2}$ on $B^+$.  We employ Scheven's notation of tori: for $(X_1,X_2)\in \R^4\times\R^{m-4}$, we define \[\mathbb{T}_{s}:=\{x\in\R^m:[x]\le s\},\quad \text{ where } [x]:=[|X_1|^2+(|X_2|-\kappa)^2]^{1/2}.\] 

Now let 
\begin{equation}\label{eq:vtilda}
 \w{v}_i(x) := \pi_\n (\vp_i + \psi (\wu_i)) \quad \text{ for } x\in B^+\setminus \mathbb{T}_{2\sigma}.
\end{equation}
We recall that $\pi_\n:O(\n)\rightarrow\n$ is the nearest point projection of a neighborhood $O(\n)\subset \R^\ell$ of $\n$ onto $\n$. We observe that
\[
 \w{v}_i\equiv \vp_i \text{ on } B^+_{\kappa-\sigma} \quad \text{ and } \quad \w{v}_i\equiv u_i \text{ outside } B^+_{\kappa+\sigma}.  
\]
Moreover, the set $\{0<\psi<1\}\setminus \mathbb{T}_{2\sigma}$ has positive distance to the energy concentration set $\{0\}\times B^{m-4}$, so that we have convergence $\w{u}_i\rightarrow 0$ in $C^2$ on the former set. Therefore, for sufficiently large $i\in\N$, the maps $\w{v}_i(x)$ are well defined for $x\in B^+\setminus \mathbb{T}_{2\sigma}$.

Simple computations yield 
\begin{equation}\label{eq:vtildawithouttsigma}
 \begin{split}
  &\int_{B^+\setminus \mathbb{T}_{2\sigma}} |\Delta \w{v}_i|^2 \dx \le \int_{B^+\setminus B^+_{\kappa}}|\Delta u_i|^2 \dx  + \int_{B^+_{\kappa}}|\Delta \vp_i|^2 \dx \\
&\quad + C\int_{\{0<\psi<1\}\setminus \mathbb{T}_{2\sigma}}\brac{|\Delta \vp_i|^2 + |\nabla \vp_i|^4}\dx\\
&\quad + C\int_{\{0<\psi<1\}\setminus \mathbb{T}_{2\sigma}}\brac{|\Delta\wu_i|^2 + |\nabla \vp_i|^2|\nabla \wu_i|^2 + \frac{|\nabla \wu_i|^2}{\sigma^2} + \frac{|\wu_i|^4 }{\sigma^4} +\frac{|\wu_i|^2}{\sigma^4} }dx\\
 &\le m \int_{B\setminus B_{\kappa}}|\nabla^2 \wu_i|^2 \dx  + \int_{B^+}|\Delta \vp_i|^2\dx +C\int_{\{0<\psi<1\}}\brac{|\nabla \vp_i|^4 + |\nabla \vp_i|^6}\dx \\
&\quad + C\int\limits_{\{0<\psi<1\}\setminus \mathbb{T}_{2\sigma}}\brac{ |\Delta\wu_i|^2 + |\nabla \wu_i|^3 + \frac{|\nabla \wu_i|^2}{\sigma^2} + \frac{|\wu_i|^4}{\sigma^4} + \frac{|\wu_i|^2}{\sigma^4} }\dx,
 \end{split}
\end{equation}
where in the last estimate we used Young's inequality \[|\nabla \vp_i|^2|\nabla \wu_i|^2\le \frac{|\nabla \vp_i|^6}{3} + \frac{2|\nabla \wu_i|^3}{3}\]
and the pointwise inequality $|\Delta u_i|^2\le m|\nabla^2 u_i|^2$. 

By Gagliardo-Nirenberg's inequality we have for $p>1$
\begin{equation}\label{eq:gnineqp}
 \norm{\w{u}_i}_{L^{2p}(B^+)}^2  \le C \norm{\wu_i}_{L^\infty(B^+)}\norm{\wu_i}_{W^{2,p}(B^+)}.
\end{equation}
For $1<p<2$ the uniform boundedness of $\norm{\wu_i}_{W^{2,p}(B^+)}$ combined with the above inequality implies $\sup_{i\in\N}\norm{\wu_i}_{L^{2p}(B^+)}<\infty$. Recall that $(\wu_i,0)\rightrightarrows(0,\overline{\nu})$, thus $\wu_i\rightarrow 0$ strongly in $W^{1,p}(B^+)$. Now, by H\"{o}lder's inequality for exponent $\frac72$
 \begin{equation}\label{eq:convuto3}
 \begin{split}
 \int_{B^+} |\nabla \wu_i|^3 \dx &= \int_{B^+} |\nabla \wu_i|^{1/2}|\nabla \wu_i|^{5/2}\dx\\
 &\le \brac{\int_{B^+} |\nabla \wu_i|^{7/4}\dx}^{2/7}\brac{\int_{B^+} |\nabla \wu_i|^{7/2}\dx}^{5/7}.
 \end{split}
 \end{equation}
Taking $p=\frac74$ we see that the first term of the latter inequality converges to 0 and the second is bounded, hence $|\nabla \wu_i|^3\rightarrow 0$ strongly in $L^3(B^+)$. Now we are ready to pass to the limit in \eqref{eq:vtildawithouttsigma}.
 
By the $C^2$-convergence $\w{u}_i\rightarrow 0$ on the set $\{0<\psi<1\}\setminus \mathbb{T}_{2\sigma}$ and by the convergence $|\Delta \w{u}_i|^2\dx\rightharpoonup \overline{\nu}$ in the sense of measures, we get, since $\overline{\nu}(\partial B) = 0$,
\begin{equation}\label{eq:limitvtilda}
 \lim_{i\rightarrow\infty}\int_{B^+\setminus \mathbb{T}_{2\sigma}}|\Delta v_i|^2\dx \le m\overline{\nu}(B\setminus B_{\kappa}) + C_\vp +C(\sigma),
\end{equation}
where $C(\sigma)$ is the limit of $C\int_{\{0<\psi<1\}}\brac{|\nabla \vp_i|^4 + |\nabla \vp_i|^6}\dx$. From the absolute continuity of the Lebesgue integral, by shrinking $\sigma>0$, the constant $C(\sigma)$ can be taken arbitrary small.

Note that for $m=4$ the above construction of $\w{v}_i$ is possible for all $x\in B^+$. In this situation $\w{v}_i \equiv u_i \text{ on } T_1$ and since the vector $\frac{\partial u_i}{\partial x_m}$ is perpendicular to the manifold $\n$ we have
\[
 \frac{\partial}{\partial x_m} \w{v}_i\bigg\rvert_{T_1} \ =\frac{\partial }{\partial x_m }u_i\bigg\rvert_{T_1},
 \]
see, e.g., \cite[Section 2.12.3]{Simon1996}. Thus, $u_i -\w{v}_i\in W^{2,2}_0(B^+)$ and we can use the minimality of $u_i$ to compare it with $\w{v}_i$. 

Applying Lemma \ref{a:secondderivativesbylaplacian} for $R=1$ and $r=1-\sigma$, using the inequality  \eqref{eq:reflectionnormestimate}, and the strong convergence of $\nabla \wu_i$ in $L^2$, we get
\begin{equation}\label{eq:zwartoscporownaniem=4}
\begin{split}
 \overline{\nu}(B_{1-\sigma}) &= \lim_{i\rightarrow\infty} \int_{B_{1-\sigma}}|\nabla^2 \wu_i|^2 \dx \le \lim_{i\rightarrow\infty} 2\int_{B} \brac{|\Delta \w{u}_i|^2 + \frac{C}{\sigma^2}|\nabla \wu_i|^2} \dx \\
 &\le \lim_{i\rightarrow\infty} 2 C_{ref} \brac{\int_{B^+} |\Delta \w{u}_i|^2\dx + \int_{B}\frac{C}{\sigma^2}|\nabla \wu_i|^2\dx}\\
 &\le \lim_{i\rightarrow\infty} 4C_{ref}\brac{\int_{B^+}|\Delta u_i|^2\dx + \int_{B^+} |\Delta \vp_i|^2 \dx+\int_{B}\frac{C}{\sigma^2}|\nabla \wu_i|^2\dx}\\
& \le \lim_{i\rightarrow\infty} 4C_{ref}\brac{\int_{B^+} |\Delta \w{v}_i|^2 \dx +  \int_{B^+} |\Delta \vp_i|^2 \dx+\int_{B}\frac{C}{\sigma^2}|\nabla \wu_i|^2\dx}\\\
&\le 4m\,C_{ref} \overline{\nu}(B\setminus B_{\kappa}) + 4C_{ref}C_\vp +C(\sigma)\\
&< \overline{\nu}(B_{1-\sigma}), 
\end{split}
\end{equation}
a contradiction with the form of $\w{\nu}$. For the last inequality we needed $C_\vp<\frac{1}{4C_{ref}}$ and a sufficiently small $\sigma$.

For $m\ge 5$ we apply a retraction $\Psi\in C^\infty(\mathbb{T}_{4\sigma}\setminus \mathbb{T}_0, \mathbb{T}_{4\sigma}\setminus \mathbb{T}_{2\sigma})$ from \cite[Lemma 3.8]{Scheven} with the following properties:
$\Psi = id$ and $\nabla \Psi\equiv Id$ on $\partial \mathbb{T}_{4\sigma}$, where $id$ denotes the identity on $\partial \mathbb{T}_{4\sigma}$ and $Id$ is the identity map on $\R^m$. Furthermore, 
\begin{equation}\label{eq:propertiesofPsi}
 |\nabla\Psi(x)|\le\frac{C\sigma}{[x]},\quad |\nabla^2 \Psi(x)|\le \frac{C\sigma}{[x]^2}, \text{ and } \det (\nabla\Psi(x))\ge \frac{C\sigma^4}{[x]^4}
\end{equation}
for constants dependent only on $m$. 

We are ready to define a comparison map. Let
\begin{equation}
 v_i(x) : = \left\{ \begin{array}{ll}
     \pi_\n (\vp_i(x) + \psi(x) \wu_i(x)) & \textrm{ for }x\in B^+\setminus \mathbb{T}_{4\sigma},\\
     \pi_\n (\vp_i(x) + \psi(x) \wu_i(\Psi(x))) & \textrm{ for }x\in \mathbb{T}_{4\sigma},
  \end{array} \right.
\end{equation}
i.e., $v_i(x) = \w{v}_i(x)$ on $B^+\setminus \mathbb{T}_{4\sigma}$. Due to the properties of the retraction $\Psi$ we have $v\in W^{2,2}(B^+,\n)$. We immediately have
\[
 \brac{v_i,\frac{\partial }{\partial x_m}v_i}\Bigg\rvert_{T_1\setminus \mathbb{T}_{4\sigma}} = \brac{u_i,\frac{\partial }{\partial x_m}u_i}\Bigg\rvert_{T_1\setminus \mathbb{T}_{4\sigma}}.
\]
To see that the trace of $v_i$ is the same as $u_i$'s on $T_1\cap \mathbb{T}_{4\sigma}$ we note that for $\Psi$ from Scheven's Lemma 3.8 we have 
\[
 x\in T_1\cap \mathbb{T}_{4\sigma} \Rightarrow \Psi(x)\in T_1.
\]
Thus, \[u_i(\Psi(x)) - \vp_i(\Psi(x)) =0, \quad \nabla u_i(\Psi (x)) = \nabla (\Psi (x)) = 0 \quad \text{ for } x\in T_1\cap \mathbb{T}_{4\sigma}.\]
Hence, after simple computations, for  $ x\in T_1\cap \mathbb{T}_{4\sigma}$,
\[
 v_i(x) = u_i(x) , \quad \frac{\partial}{\partial x_m}v_i(x) = (\pi_\n)_{x_k}\frac{\partial u_i^k(x)}{\partial x_m} = \frac{\partial u_i(x)}{\partial x_m}.
\]
The last equality is, again, a consequence of the fact that $\frac{\partial u_i}{\partial x_m}\perp\n$. 

Similarly as in \eqref{eq:vtildawithouttsigma} we compute 
\begin{equation}
 \begin{split}
  &\int_{\mathbb{T}_{4\sigma}^+}|\Delta v_i|^2\dx \\
  &\le C\int_{\mathbb{T}_{4\sigma}^+}\brac{|\Delta \vp_i|^2 +|\nabla \vp_i|^4 + |\nabla \vp_i|^6}\dx \\
  &\quad + C\int_{\mathbb{T}_{4\sigma}^+} \bigg(|\nabla^2 \wu_i\circ\Psi|^2|\nabla\Psi|^4 + |\nabla \wu_i\circ\Psi|^3|\nabla \Psi|^3 \\
  &\phantom{\quad + C\int_{T_{4\sigma}^+}}\quad + |\nabla \wu_i\circ\Psi|^2\brac{\frac{|\nabla \Psi|^2}{\sigma^2} + |\nabla^2 \Psi|^4} + \frac{|\wu_i\circ \Psi|^4}{\sigma^4} + \frac{|\wu_i\circ \Psi|^2}{\sigma^4}\bigg) \dx.
 \end{split}
\end{equation}
Using the properties \eqref{eq:propertiesofPsi} of $\Psi$ and the fact that $[x]\le \frac{1}{4}$ we get
\begin{equation}
 \begin{split}
  &\int_{\mathbb{T}_{4\sigma}^+}|\Delta v_i|^2\dx - C\int_{\mathbb{T}_{4\sigma}^+}\brac{|\Delta \vp_i|^2 +|\nabla \vp_i|^4+ |\nabla \vp_i|^6}\dx\\
&\le C\int_{\mathbb{T}_{4\sigma}^+} \bigg(\frac{\sigma^4}{[x]^4}|\nabla^2 \wu_i\circ\Psi|^2 +\frac{\sigma^3}{[x]^4}|\nabla \wu_i\circ\Psi|^3 +\frac{1}{[x]^4}|\nabla \wu_i\circ\Psi|^2\\
&\phantom{\quad + C\int_{T_{4\sigma}^+}}\quad+ \frac{\sigma^2}{[x]^4}|\nabla \wu_i\circ\Psi|^2 +\frac{\sigma^{-4}}{[x]^4}|\wu_i\circ \Psi|^4 +\frac{\sigma^{-4}}{[x]^4}|\wu_i\circ \Psi|^2 \bigg)\dx \\
&\le C\int_{\mathbb{T}_{4\sigma}^+} \bigg(|\nabla^2 \wu_i\circ\Psi|^2 +\sigma^{-1}|\nabla \wu_i\circ\Psi|^3 +\sigma^{-4}|\nabla \wu_i\circ\Psi|^2\\
&\phantom{\quad + C\int_{T_{4\sigma}^+}}\quad+ \sigma^{-2}|\nabla \wu_i\circ\Psi|^2 +\sigma^{-8}|\wu_i\circ \Psi|^4 +\sigma^{-8}|\wu_i\circ \Psi|^2 \bigg)\det(\nabla \Psi)\dx\\
&\le C\int_{\mathbb{T}_{4\sigma}^+\setminus \mathbb{T}_{2\sigma}^+} \bigg(|\nabla^2 \wu_i|^2 +\sigma^{-1}|\nabla \wu_i|^3 +\sigma^{-4}|\nabla \wu_i|^2+ \sigma^{-8}|\wu_i|^4 +\sigma^{-8}|\wu_i|^2 \bigg)\dx.
 \end{split}
\end{equation}
In order to pass with $i$ to the limit in the above inequality we note that similarly as in \eqref{eq:limitvtilda}, we have $\int_{\mathbb{T}_{4\sigma}^+\setminus \mathbb{T}_{2\sigma}^+}|\nabla^2 \wu_i|^2\dx \le \int_{\mathbb{T}_{4\sigma}\setminus \mathbb{T}_{2\sigma}}|\nabla^2 \wu_i|^2\dx$. Thus, 
\begin{equation}\label{eq:limitviT}
\begin{split}
 \lim_{i\rightarrow\infty}\int_{\mathbb{T}_{4\sigma}^+} |\Delta v_i|^2\dx &\le C\int_{\mathbb{T}^+_{4\sigma}}(|\Delta \vp|^2 + |\nabla \vp|^4+|\nabla \vp|^6)\dx+ C\, \overline{\nu}(\mathbb{T}_{4\sigma})\\
 &= C(\sigma) + C\, \overline{\nu}(\mathbb{T}_{4\sigma}).
 \end{split}
\end{equation}
Once again, from the absolute continuity of the Lebesgue integral, by shrinking $\sigma>0$,  the constant $C(\sigma)$ can be taken arbitrary small.

Now let $0<\gamma<1$ be a small number. We have by Lemma \ref{a:secondderivativesbylaplacian}
\begin{equation}\label{eq:seconddervbylap}
 \int_{B^+_{1-\gamma}}|\nabla \wu_i|^2\dx \le 2\int_{B^+}\brac{|\Delta \wu_i|^2 + \frac{C}{\gamma^2}|\nabla \wu_i|^2}\dx.
\end{equation}
Combining \eqref{eq:reflectionnormestimate} and \eqref{eq:seconddervbylap} we obtain 
\begin{equation}\label{eq:dowodzwartosciporownanie}
\begin{split}
 \overline{\nu}(B_{1-\gamma}) &=  \lim_{i\rightarrow\infty} \int_{B_{1-\gamma}}|\nabla^2 \wu_i|^2 \dx \le \lim_{i\rightarrow\infty} C_{ref}\int_{B^+_{1-\gamma}}|\nabla^2 \wu_i|^2 \dx\\
 &\le \lim_{i\rightarrow\infty}2\,C_{ref}\int_{B^+}\brac{|\Delta \wu_i|^2 + \frac{C}{\gamma^2}|\nabla \wu_i|^2}\dx \\
 &\le \lim_{i\rightarrow\infty} 4\,C_{ref}\brac{\int_{B^+}|\Delta u_i|^2\dx + \int_{B^+} |\Delta \vp_i|^2 \dx +\int_{B^+} \frac{C}{\gamma^2}|\nabla \wu_i|^2\dx}\\
& \le \lim_{i\rightarrow\infty} 4\,C_{ref}\brac{\int_{B^+} |\Delta v_i|^2 \dx + \int_{B^+} |\Delta \vp_i|^2 \dx +\int_{B^+} \frac{C}{\gamma^2}|\nabla \wu_i|^2 \dx}\\
&\le 4m\,C_{ref} \overline{\nu}(B\setminus B_{\kappa}) + 4m\,C_{ref}(C_\vp +C(\sigma)+ C\,\overline{\nu}(\mathbb{T}_{4\sigma})).
\end{split}
\end{equation}
To get a contradiction we use the special form of the measure \[\overline{\nu}\mr\overline{B} = C\h^{m-4}\mr\brac{\{0\}\times\overline{B}^{m-4}}.\] 
We choose the number $\kappa$ so that $4m\, C_{ref}\overline{\nu}(B\setminus B_\kappa) < \overline{\nu}(B_{1-\gamma})$, for example
\[
\kappa = \sqrt[m-4]{1-\frac{(1-\gamma)^{m-4}}{3500m}}.
\]
Next we observe that if $C_\vp$ is sufficiently small, e.g, is such that
\[
 4m\,C_{ref} C_\vp <\frac12 \brac{\overline{\nu}(B_{1-\gamma})-4m\, C_{ref}\overline{\nu}(B\setminus B_\kappa )}
\]

then by shrinking $\sigma>0$ the number $4m\,C_{ref}(C(\sigma) + C\,\overline{\nu}(\mathbb{T}_{4\sigma}))$ can be arbitrary small and thus
\[
4m\,C_{ref} \overline{\nu}(B\setminus B_{\kappa}) + 4m\,C_{ref}(C_\vp +C(\sigma)+ C\,\overline{\nu}(\mathbb{T}_{4\sigma})) < \overline{\nu}(B),
\]
contradicting \eqref{eq:dowodzwartosciporownanie}. This finishes the proof of Theorem \ref{thm:comp}.

\end{proof}

\section{Tangent maps at the boundary}\label{s:tangentmaps}
In this section we prove, using the compactness result from the previous section, that limits of rescaled maps converge strongly to boundary tangent maps, which are homogeneous of degree 0 and have constant values on the flat part of the boundary $\partial B^+$. Next, we show how to rule out the possibility of existence of nonconstant minimizing biharmonic maps from a half ball, that are constant on the flat part of the boundary $T_1$. Finally, combining results of this section, Scheven's lemma, which states that the tangent maps that occur in the dimension reduction argument are minimal, and Gong, Lamm, and C.~Wang's epsilon regularity result --- Lemma \ref{le:eps-reg} --- we give the proof of the main result.

\begin{definition}
 Let $a\in T_1$ and $x\in \frac{1}{\lambda}(B^+_1-a)$. We define the rescaled map by
\[
 u_{a,\lambda}(x) := u(a+\lambda x).
\]
A map $v\in W^{2,2}_{loc}(\R^m, \n)$ is called a \emph{tangent map at the boundary} of $u$ at the point $a$ if there exists a sequence $\lambda_i\searrow0$ with $u_{a,\lambda_i}\rightarrow v$ in $W^{2,2}_{loc}(\R^m, \n)$ as $i\rightarrow\infty$.  
\end{definition}

\begin{lemma}\label{le:homocompstrong}
 Let $u$ be as before with boundary values $\varphi\in C^\infty$ and let $a\in T_1$. Then for each sequence $\{\lambda_i\}_{i\in\N}$ for which $0<\lambda_i<1$, there exists a subsequence $\lambda_{i_j}\rightarrow0$ such that the maps $u_{\lambda_{i_j}}$ converge strongly in $W^{2,2}(B^+_{1/2},\n)$ to a map $u_0\in W^{2,2}(B^{+},\n)$ that is biharmonic, homogeneous of degree 0, and has constant boundary values on $T_1$. 
\end{lemma}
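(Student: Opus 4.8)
The plan is to follow the classical blow-up scheme of Schoen--Uhlenbeck, now that the hard analytic input --- the boundary compactness Theorem \ref{thm:comp} --- is available. First I would fix the point $a\in T_1$ and, after translating and rescaling so that $a=0$ and $B_1^+$ is our reference half-ball, consider the rescaled maps $u_{\lambda_i}(x)=u(\lambda_i x)$. Each $u_{\lambda_i}$ is again a minimizing biharmonic map on $B^+_{1/\lambda_i}\supset B^+_4$ (minimality is scaling invariant, and the energy $H$ has the right homogeneity), and it satisfies the Dirichlet boundary condition on $T_4$ with boundary datum $\vp_{\lambda_i}(x)=\vp(\lambda_i x)$. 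Since $\vp\in C^\infty$, the rescaled data $\vp_{\lambda_i}$ converge in $C^k_{loc}$ to the constant map $\vp(0)$ (more precisely, in $C^2_{loc}$ and $L^6_{loc}$, with $\sup_i\|\vp_{\lambda_i}\|_{C^2(B_4^+)}$ and $\sup_i\|\vp_{\lambda_i}\|_{C^3(B_4^+)}$ tending to zero, and $\int_{B_4^+}|\Delta\vp_{\lambda_i}|^2\to0<C_\vp$), so for $i$ large the hypotheses \eqref{eq:compthmassumptions} of Theorem \ref{thm:comp} are met. The uniform $W^{2,2}(B_4^+)$ bound on $u_{\lambda_i}$ follows from the boundary monotonicity formula Theorem \ref{thm:boundarymono} applied at $a=0$: $H_u^+(0,\tau)$ is essentially monotone, hence bounded as $\tau\searrow0$, which after undoing the rescaling gives $\sup_i\|u_{\lambda_i}-\vp_{\lambda_i}\|_{W^{2,2}(B_4^+)}<\infty$ and therefore $\sup_i\|u_{\lambda_i}\|_{W^{2,2}(B_4^+)}<\infty$.

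With these verifications in place, Theorem \ref{thm:comp} yields a subsequence $\lambda_{i_j}$ and a map $u_0\in M(B_4^+)\subset W^{2,2}(B^+,\n)$ with $u_{\lambda_{i_j}}\to u_0$ strongly in $W^{2,2}(B^+_{1/2},\n)$. Being a $W^{2,2}_{loc}$-limit of minimizing biharmonic maps, $u_0$ is weakly biharmonic (the Euler--Lagrange equation \eqref{eq:elgeneral1} passes to strong $W^{2,2}$ limits), and since $\vp_{\lambda_{i_j}}\to\vp(0)$, the boundary trace identity \eqref{eq:dirichletboundary} passes to the limit and shows $u_0$ and $\partial u_0/\partial x_m$ equal those of the constant $\vp(0)$ on $T_1$; in particular $u_0$ has constant boundary values on $T_1$.

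The remaining point, and the one requiring the boundary monotonicity formula in an essential way, is the $0$-homogeneity of $u_0$. Here I would argue exactly as in the interior theory: along the converging subsequence, and using \eqref{eq:psistrong} of Lemma \ref{le:convergenceofmonocomp} (the strong convergence of $H^+$ and $R^+$), the quantity $\Phi$-type boundary monotonicity quantity $H_{u_0}^+(0,r)+e^{Cr}R_{u_0}^+(0,r)+\text{(lower order)}$ is \emph{constant} in $r$ on $(0,1)$. Indeed, for each fixed $0<\rho<r$ one has, by the scaling and the convergence $\lambda_{i_j}\to0$, that $H^+_{u_0}(0,\rho)$ and $H^+_{u_0}(0,r)$ are both equal to the common limit $\lim_{\tau\searrow0}\bigl(H_u^+(0,\tau)+\cdots\bigr)$, while the error terms $Cre^{Cr}$ in \eqref{eq:boundarymono} scale away. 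Feeding this equality back into the monotonicity identity forces the nonnegative bulk term $P^+_{u_0}(0,\rho,r)$ in \eqref{de:Pplus} to vanish, i.e.
\begin{equation*}
(u_0)_{x_j}+x^i(u_0)_{x_ix_j}=0\quad\text{and}\quad x^i(u_0)_{x_i}=0\qquad\text{a.e. in }B^+,
\end{equation*}
and the second identity says precisely that the radial derivative of $u_0$ vanishes, i.e. $u_0$ is homogeneous of degree $0$. The main obstacle I anticipate is precisely this last step: making rigorous that the boundary-monotone quantity of the blow-up limit is \emph{constant}, which requires carefully combining the strong $W^{2,2}$ convergence on $B^+_{1/2}$ with the boundary monotonicity inequality \eqref{eq:boundarymono} and the scaling behaviour of its error term $Cre^{Cr}$ --- this is the place where the artificial exponential factors and the extra boundary term $R^+_u$ (absent in the interior case) must be shown not to spoil the rigidity argument, and where one must be slightly careful that the convergence in \eqref{eq:psistrong} holds only for a.e.\ $r$ and only along a further subsequence.
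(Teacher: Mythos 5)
Your proposal is correct and follows essentially the same route as the paper: verify the hypotheses of Theorem \ref{thm:comp} (uniform $W^{2,2}$ bound via the boundary monotonicity formula and convergence of the rescaled boundary data to the constant $\vp(a)$) to extract a strongly convergent subsequence, then invoke Lemma \ref{le:convergenceofmonocomp} to conclude $H^+_{u_0}(0,r)$ and $R^+_{u_0}(0,r)$ are $r$-independent, and feed this rigidity back into the monotonicity inequality \eqref{eq:boundarymono} to make the radial term of $P^+$ vanish, yielding $x^i(u_0)_{x_i}\equiv 0$ and hence $0$-homogeneity. The only cosmetic difference is that the paper passes to the limit in the monotonicity inequality for $u$ itself and then rescales, whereas you work directly with $P^+_{u_0}$; both implementations rest on the same identities.
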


\begin{proof}
\textsc{Step 1: Strong convergence.} Observe that $\sup_i\norm{u_{a,\lambda_i}}_{W^{2,2}(B^+)}<\infty$. Indeed, by a change of variables 
\[
  \norm{\nabla^2 u_{a,\lambda_i} }^2_{L^2(B_1^+)} =\int_{B_1^+}|\lambda_i^2 \nabla^2 u(a+\lambda_i x) |^2 \dx = \lambda_i^{4-m}\int_{B^+_{\lambda_i}(a)}|\nabla^2 u|^2\,dy.
\]
The supremum of the latter one is bounded by Lemma \ref{le:boundednessinmorrey}. Moreover, $u_0\big\rvert_{T_1}=\varphi(a)$ by the continuity of $\varphi$. Thus the assumptions of Theorem \ref{thm:comp} are satisfied and we obtain the strong subconvergence to $u_0$.

\textsc{Step 2:  Homogeneity of degree 0.}
By strong convergence and Lemma \ref{le:convergenceofmonocomp} we have
\begin{align*}
H^+_{u_0}(0,r) &= \lim_{i\rightarrow\infty}H^+_{u_{a,r_i}}(0,r) = \lim_{i\rightarrow\infty}H^+_{u}(a,rr_i) = \lim_{\rho\searrow0}H^+_u(a,\rho)\\
R^+_{u_0}(0,r)&= \lim_{i\rightarrow\infty}R^+_{u_{a,r_i}}(0,r) = \lim_{i\rightarrow\infty}R^+_{u}(a,rr_i) = \lim_{\rho\searrow0}H^+_u(a,\rho).
\end{align*}
Thus, $H^+_{u_0}(0,r),\, R^+_{u_0}(0,r)$ do not depend on $r$ and we denote
\begin{equation}\label{eq:homog1}
 H^+_{u_0}(0) =\lim_{\rho\searrow0}H^+_u(a,\rho), \quad R^+_{u_0}(0)=\lim_{\rho\searrow0}H^+_u(a,\rho).
\end{equation}

Now by monotonicity formula \eqref{eq:boundarymono}
 \begin{equation*}
  \begin{split}
 P^+_{u,\vp}(a,\rho,r) &\le e^{Cr}H^+_{u,\vp}(a,r)+ e^{Cr}R^+_{u,\vp}(a,r) +Cre^{Cr} \\
 & \quad -   H^+_{u,\vp}(a,\rho) - e^{C\rho}R^+_{u,\vp}(a,\rho).
  \end{split}
 \end{equation*}
In particular 
 \begin{equation*}
  \begin{split}
 \int\limits_{B^+_r(a)\setminus B^+_\rho(a)} \frac{\left|(x-a)^i(u-\vp)_{x_i}\right|^2}{|x-a|^m}\dx &\le e^{Cr}H^+_{u,\vp}(a,r)+ e^{Cr}R^+_{u,\vp}(a,r) +Cre^{Cr} \\
 & \quad -   H^+_{u,\vp}(a,\rho) - e^{C\rho}R^+_{u,\vp}(a,\rho).
  \end{split}
 \end{equation*}
By \eqref{eq:homog1} passing to the limit in the last inequality with $\rho\searrow 0$ we obtain
 \begin{equation}\label{eq:homogest}
  \begin{split}
 \int_{B^+_r(a)} \frac{\left|(x-a)^i(u-\vp)_{x_i}\right|^2}{|x-a|^m}\dx &\le e^{Cr}H^+_{u}(a,r)+ e^{Cr}R^+_{u}(a,r) \\
 & \quad -   H^+_{u_0}(0) - R^+_{u_0}(0) +Cre^{Cr}.
  \end{split}
 \end{equation}
 By a change of variables
\begin{equation*}
 \int_{B^+_r(a)} \frac{\left|(x-a)^i(u-\vp)_{x_i}\right|^2}{|x-a|^m}\dx = 
 \int_{B^+_1} \frac{|x^i\brac{u_{a,r}}_{x_i}|^2}{|x|^m}\dx.
\end{equation*}
Thus, passing with $r$ to zero in \eqref{eq:homogest} we get
 \begin{equation*}
\lim_{r\rightarrow 0}\int_{B^+_1} \frac{|x^i\brac{u_{a,r}}_{x_i}|^2}{|x|^m}\dx =0
 \end{equation*}
and as a consequence $x^i (u_0)_{x_i} \equiv 0$ a.e., which implies the desired homogeneity.

\end{proof}

 \begin{lemma}\label{jednstzero}
  Any minimizing biharmonic map $u_0\in W^{2,2}(B^{+}_1,\n)$ that is homogeneous of degree 0 and that is constant on $B_1\cap\{x_m=0\}$ must be a constant. 
 \end{lemma}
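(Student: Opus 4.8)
The plan is to use the homogeneity to reduce the statement to the vanishing of the Hessian energy of $u_0$, and then to finish by a reflection argument for harmonic functions. Write $u_0(x)=\omega(x/|x|)$ with $\omega\in W^{2,2}(\S^{m-1}\cap\{x_m>0\},\n)$ equal to the constant $c$ on the equator $\S^{m-1}\cap\{x_m=0\}$; the Euler relation $\sum_i x^i(u_0)_{x_i}\equiv 0$ holds, and in spherical coordinates $\Delta u_0(x)=|x|^{-2}(\Delta_{\S^{m-1}}\omega)(x/|x|)$, so by Fubini
\[
 H(u_0;B_r^+)=r^{m-4}\,H(u_0;B_1^+),\qquad H(u_0;B_1^+)=\frac{1}{m-4}\int_{\S^{m-1}\cap\{x_m>0\}}|\Delta_{\S^{m-1}}\omega|^2\dh<\infty,
\]
the finiteness using $m\ge 5$. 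I would also record that $u_0\in C^\infty(\overline{B_1^+}\setminus\{0\})$: at interior points this is the interior theory, and at $a\in T_1\setminus\{0\}$ the scale-invariant quantity $r^{4-m}\int_{B_r^+(a)}(|\nabla^2u_0|^2+r^{-2}|\nabla u_0|^2)\dx$ tends to $0$ as $r\searrow0$ by $0$-homogeneity, so Corollary~\ref{co:odpschcor27} applies near $a$. If one knows $H(u_0;B_1^+)=0$, then $\Delta u_0=0$, so each component $u_0^k$ is a bounded harmonic function on $B_1^+$ equal to the constant $c^k$ on $T_1$; the odd Schwarz reflection of $u_0^k-c^k$ across $T_1$ is then bounded, harmonic on $B_1\setminus\{0\}$ and homogeneous of degree $0$, hence, after removing the isolated singularity, a bounded $0$-homogeneous harmonic function on $B_1$, i.e. a constant, necessarily $0$; thus $u_0\equiv c$.

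So everything comes down to proving $H(u_0;B_1^+)=0$, and for this I would argue by energy comparison, exploiting that $u_0\equiv c$ on $T_1$ (and that, in the situation where this lemma is applied, $u_0$ also satisfies $\partial_{x_m}u_0\equiv0$ on $T_1$, inherited from $\nabla\vp_{a,\lambda}\to0$ for the rescaled data $\vp_{a,\lambda}(x)=\vp(a+\lambda x)$; i.e. $u_0$ satisfies \eqref{eq:dirichletboundary} with $\vp\equiv c$, which may be assumed). For any $\lambda\in(0,1)$, minimality of $u_0$ on $B_\lambda^+$ together with $H(u_0;B_\lambda^+)=\lambda^{m-4}H(u_0;B_1^+)$ and the fact that, by homogeneity, the trace of $u_0$ on $\partial B_\lambda^+$ is that on $\partial B_1^+$ rescaled, gives $H(u_0;B_1^+)\le H(v;B_1^+)$ for any fixed competitor $v\in W^{2,2}(B_1^+,\n)$ with $u_0-v\in W^{2,2}_0(B_1^+,\R^\ell)$; it therefore suffices to produce, for every $\eta>0$, such a $v$ with $H(v;B_1^+)<\eta$. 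The natural candidate mirrors the comparison map from the proof of Theorem~\ref{thm:comp}: with a radial cut-off $\psi$ (equal to $0$ on $B_{\kappa-\sigma}^+$ and to $1$ off $B_{\kappa+\sigma}^+$) and, for $m\ge5$, the Scheven retraction $\Psi$ of \cite[Lemma~3.8]{Scheven} near $T_1$, set $v:=\pi_\n(c+\psi(u_0-c))$, precomposed with $\Psi$ where needed. Since $u_0-c$ and its normal derivative vanish on $T_1$ and $\psi$ is radial, $u_0-v\in W^{2,2}_0$ automatically, and away from the annular transition region $v$ equals $c$ or $u_0$; the construction simplifies relative to the proof of Theorem~\ref{thm:comp} in that there is no defect measure ($u_0$ is smooth off $0$) and the boundary datum is constant, so the quantity playing the role of $C_\vp$ vanishes.

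The main obstacle is the energy bound $H(v;B_1^+)<\eta$. Running the estimate as in \eqref{eq:vtildawithouttsigma}--\eqref{eq:dowodzwartosciporownanie}, with $\overline\nu$ replaced by $|\nabla^2u_0|^2\dx$ and $C_\vp=0$, yields a bound of the shape $H(v;B_1^+)\le 4m\,C_{ref}\,H(u_0;B_1^+\setminus B_\kappa^+)+(\text{cut-off error})=4m\,C_{ref}(1-\kappa^{m-4})\,H(u_0;B_1^+)+(\text{cut-off error})$, and taking $\kappa$ close to $1$ makes the first term an arbitrarily small fraction of $H(u_0;B_1^+)$. Unlike in the proof of Theorem~\ref{thm:comp}, however, the cut-off error does not come for free: there the corresponding terms vanished because the relevant sequence converged to $0$ off a small set, whereas here $u_0$ is a single fixed map with no such smallness, and on the transition annulus $u_0-c$ and $\nabla u_0$ are of unit size. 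The crux is thus to refine the choice of $v$ — choosing the transition region and using the retraction $\Psi$ together with the radial structure of $u_0$ and the constancy of the boundary datum — so that this cut-off error is absorbed; once that is done one obtains $H(v;B_1^+)<\eta$ for every $\eta>0$, hence $H(u_0;B_1^+)=0$, and the first paragraph completes the proof.
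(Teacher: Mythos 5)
Your reduction to showing $H(u_0;B_1^+)=0$ is fine, and the closing step (odd reflection of a harmonic $0$-homogeneous function is constant) is correct. The problem is the route you propose to $H(u_0;B_1^+)=0$: it is circular. By minimality of $u_0$ on $B_1^+$, $H(u_0;B_1^+)=\inf_v H(v;B_1^+)$ over competitors $v$ with $u_0-v\in W^{2,2}_0(B_1^+,\R^\ell)$; you propose to prove that this infimum is $0$ by exhibiting competitors of arbitrarily small energy. But any admissible $v$ agrees with $u_0$ on the hemisphere $\partial B_1\cap\{x_m>0\}$, so the infimum is \emph{exactly} $H(u_0;B_1^+)$, and no competitor can beat it. Your own diagnosis confirms this: the cut-off error in the interpolation annulus is governed by $\sigma^{-2}|\nabla u_0|^2$, $\sigma^{-4}|u_0-c|^2$, etc., which are of unit size for a fixed nonconstant $u_0$; in Theorem~\ref{thm:comp} the analogous terms disappeared only because the rescaled maps $\w{u}_i$ tended to $0$ in $C^2$ off the defect set. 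There is no smallness to exploit here, and no choice of $\kappa$, $\sigma$, or retraction $\Psi$ will produce it, precisely because what you would be producing is a competitor strictly beating the minimizer. The ``crux'' you leave open is therefore not a technical refinement but the whole content of the lemma.

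The paper's argument for $m>4$ uses minimality in a qualitatively different way. It does not construct a low-energy competitor; instead it constructs the Hardt--Lin one-parameter family $v_\alpha$ (the ``pull toward the pole'' map, obtained by composing $u_0$ with a sphere diffeomorphism that moves the north pole) with $v_0=u_0$, and exploits the \emph{first-order} optimality condition $J'(0^+)\ge0$ for $J(\alpha)=H(v_\alpha;B_1^+)$. A direct computation shows $J'(0^+)=-\int_0^{\pi/2}\cos\phi\int_{\S^{m-2}}e(u_0)\,d\omega\,d\phi\le0$, which forces $e(u_0)\equiv0$ and hence continuity and constancy of $u_0$. It is this use of a first variation, rather than an energy comparison against a nearly-constant competitor, that avoids the circularity. (Two further minor gaps in your write-up: the $m=4$ case, which the paper handles by a scaling identity, is not addressed, and you need $m\ge5$ for the factor $\frac{1}{m-4}$ in the radial integration.)
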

\begin{proof}
For $m<4$ by Sobolev embedding theorem a mapping in $W^{2,2}$ must be continuous. Being homogeneous of degree 0, it must be a constant.
 
 For $m=4$ assume, contrary to our claim, that $u_0$ is a nonconstant minimizing map from $B^{+}$ to $\n$. Let $y=\beta(x)=2x$. Simple calculation gives
 \begin{align*}
  0 &< \int_{B^+_1}|\Delta u_0|^2 \, dx=\int_{B^+_{\frac12}}|\Delta u_0(2x)|^2\cdot 2^4 \, dx\\
  & = \int_{B^+_{\frac12}}|\Delta (u_0\circ\beta)(x)|^2|\nabla^2\beta(x)|^{-2}\cdot 2^4 \, dx = \int_{B^+_{\frac12}}|\Delta u_0(x)|^2 \, dx\\
  & <\int_{B^+_1}|\Delta u_0(x)|^2 \, dx <\infty,
 \end{align*}
which is impossible.
 
\begin{figure}[!ht]
\centering
  \includegraphics[width=6cm]{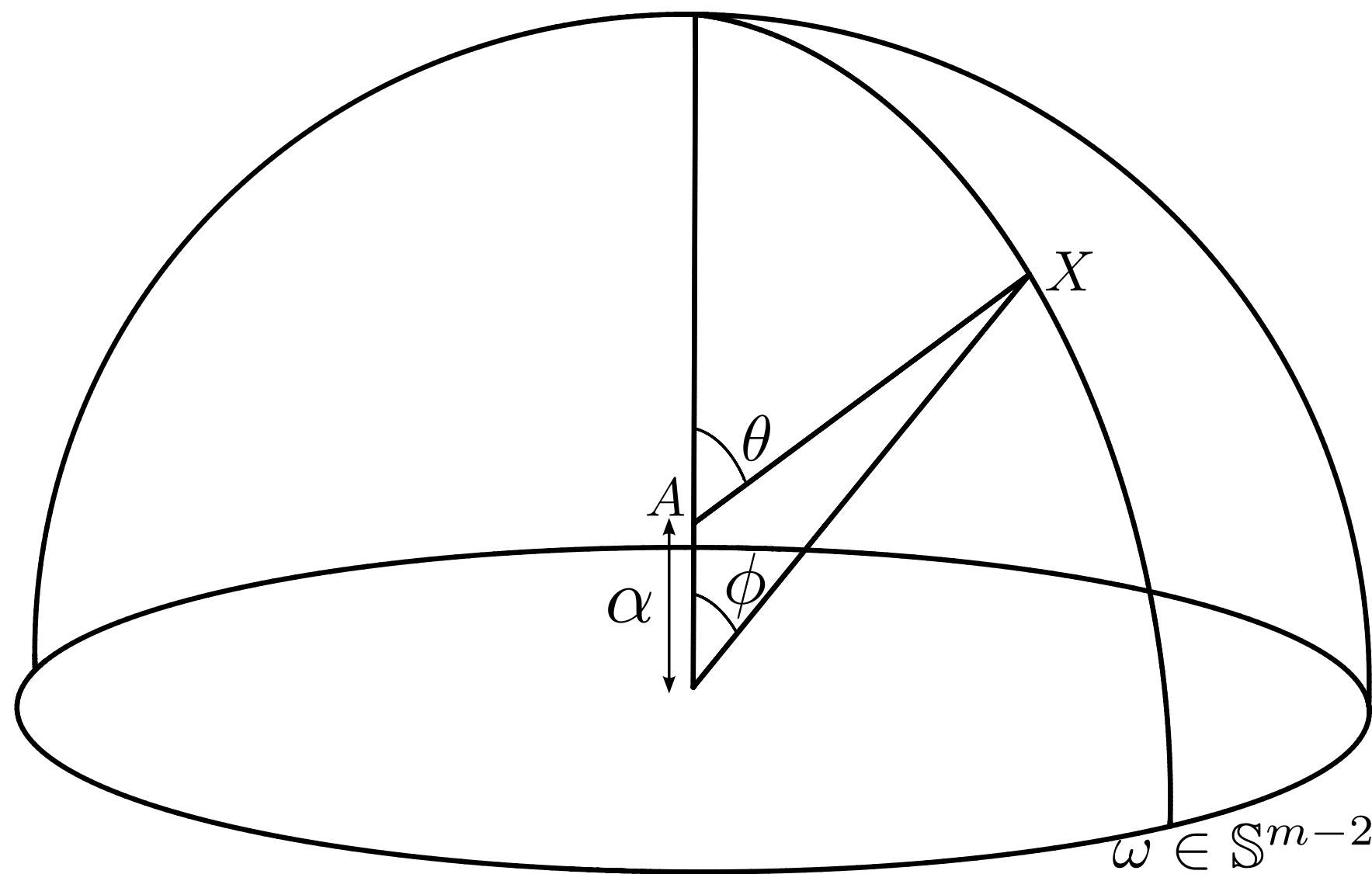}
  \caption{Proof of Lemma 3.3, the relation between $\alpha, \theta$ and $\phi$} \label{gr:tangent}
\end{figure} 
 
For $m>4$, we shall consider the energy of a comparison function $v_\alpha$, the same as in \cite[proof of Theorem 5.7]{HLp}. We use spherical coordinates to represent a point $X$ on the hemisphere $\partial B_1\cap\{x_m\ge 0\}$ by a point $\omega\in\S^{m-2}$ and the angle $\phi\in[0,\frac12 \pi]$. Let  $0<\alpha<1$, $A=(0,\ldots,0,\alpha)$ and $\theta$ denote the angle between vectors $AX$ and $AN$ (where $N=(0,\ldots,0,1)$ is the north pole). The angle $\theta$ satisfies the relation
\begin{equation}
 \theta = \phi + \arcsin(\alpha \sin\theta).
\end{equation}
As the angle $\phi$ ranges between 0 and $\frac12 \pi$, the angle $\theta$ ranges between 0 and $\Theta(\alpha) = \text{arcctg}(-\alpha) = \pi - \arcsin((1+\alpha^2)^{-\frac12})$. The distance between $x$ and $(0,\ldots,\alpha)$ is $R(\phi,\alpha) = [(\alpha - \cos\phi)^2+\sin^2\phi]^{\frac12}$. The desired comparison mapping is given by
\begin{equation}
 v_\alpha(\theta,\omega) = u_0(\phi,\omega) \quad \text{for } \theta\in[0,\Theta] \text{ and } \omega\in\S^{m-2}.
\end{equation}
Let $J(\alpha)= \int_{B^+}|\Delta v_\alpha|^2\dx$ denote the Hessian energy of $v_\alpha$. One can compute
\begin{align*}
 &J(\alpha)=\\ &=\int_0^{\Theta(\alpha)}\int_0^{R(\phi, \alpha)}\int_{\S^{m-2}}\frac{1}{r^4\sin^2\theta} \sum_{i=1}^k\left|\cos\theta \frac{\partial v_\alpha^i}{\partial \theta} + \sin \theta \frac{\partial^2 v^i_\alpha}{\partial\theta^2} + \sin^{-1}\theta \frac{\partial^2 v_\alpha^i}{\partial \omega^2}\right|^2\\
 &\phantom{= \int_0^{\Theta(\alpha)}\int_0^{R(\phi, \alpha)}\int_{\S^{m-2}}}\cdot \sin^{m-2}\theta r^{m-1} \, d\omega\, dr\, d\theta\\
 &=\int_0^{\Theta(\alpha)}\int_{\S^{m-2}} \sum_{i=1}^k\left|\cos\theta \frac{\partial v_\alpha^i}{\partial \theta}+\sin \theta\frac{\partial^2 v_\alpha^i}{\partial \theta^2}+\sin^{-1}\theta\frac{\partial^2 v_\alpha^i}{\partial \omega^2}\right|^2\\
 &\phantom{=\int_0^{\Theta(\alpha)}\int_{\S^{m-2}}}\cdot\sin^{m-4}\theta \frac{1}{m-4} R^{m-4}(\phi,\alpha) \, d\omega \,d\theta.
\end{align*}
Changing variables according to $\theta =\theta (\phi, \alpha):[0,\frac12 \pi]\times[0,1)\rightarrow [0,\Theta(\alpha))$, we find that $J(\alpha)$ equals
\begin{align*}
 &J^i(\alpha) =\\
 &\frac{1}{m-4} \int_0^{\frac{\pi}{2}}\int_{\S^{m-2}}\Bigg|\cos\theta(\phi,\alpha)\frac{\partial u_0^i}{\partial \phi}\frac{\partial \phi}{\partial \theta}\\
 &\phantom{\frac{1}{m-4} \int_0^{\frac{\pi}{2}}\int_{\S^{m-2}}\biggl\lvert}\ +\sin\theta(\phi,\alpha)\left[\frac{\partial^2 u_0^i}{\partial \phi^2}\left(\frac{\partial \phi}{\partial \theta}\right)^2 + \frac{\partial u_0^i}{\partial \phi}\frac{\partial^2 \phi}{\partial \theta^2}\right] + \sin^{-1}\theta \frac{\partial^2 u_0^i}{\partial \omega^2}\Bigg|^2\\
 &\phantom{\frac{1}{m-4} \int_0^{\frac{\pi}{2}}\int_{\S^{m-2}}}\cdot\sin^{m-4}\theta(\phi,\alpha)R^{m-4}(\phi,\alpha)\left|\frac{\partial \theta}{\partial \phi}\right| \, d\omega \, d\phi.
\end{align*}
We denote
\[
K(\alpha,\omega,\phi)= \cos\theta\frac{\partial u_0^i}{\partial \phi}\frac{\partial \phi}{\partial \theta} +\sin\theta\left[\frac{\partial^2 u_0^i}{\partial \phi^2}\left(\frac{\partial \phi}{\partial \theta}\right)^2 + \frac{\partial u_0^i}{\partial \phi}\frac{\partial^2 \phi}{\partial \theta^2}\right] + \sin^{-1}\theta \frac{\partial^2 u_0^i}{\partial \omega^2}.
\]
Since $J(\alpha)$ has a minimum at $\alpha=0$, the one-sided derivative $J'(0^+)$ is non-negative (we cannot strengthen this into $J'(0)=0$ as $v_\alpha$ is not necessary differentiable on an open interval containing $\alpha=0$). We compute this derivative
\begin{align*}
 &(m-4)\frac{d}{d\alpha}J^i(\alpha)  =\\ 
 &\int_0^{\frac{\pi}{2}}\int_{\S^{m-2}} 2 K(\alpha,\omega,\phi)\frac{\partial}{\partial\alpha}K(\alpha,\omega,\phi)\sin^{m-4}\theta(\phi,\alpha)R^{m-4}(\phi,\alpha)\left|\frac{\partial \theta}{\partial \phi}\right|\\
 &\phantom{\int_0^{\frac{\pi}{2}}\int_{\S^{m-2}}} + |K(\alpha, \omega,\phi)|^2 \cdot\Biggl((m-4)\sin^{m-5}\theta\cos\theta\frac{\partial\theta}{\partial\alpha}R^{m-4}\left|\frac{\partial \theta}{\partial\phi}\right| \\
 &\phantom{\int_0^{\frac{\pi}{2}}\int_{\S^{m-2}}}\quad + (m-4)\sin^{m-4}\theta R^{m-5}\frac{\partial R}{\partial\alpha}\left|\frac{\partial \theta}{\partial\phi}\right|+ \sin^{m-4}\theta R^{m-4}\frac{\partial^2 \theta}{\partial\phi^2}\Biggr) \, d\omega \,d\phi,
\end{align*}
where
\[
 \begin{split}
 \frac{\partial}{\partial \alpha}K(\alpha, \omega,\phi) &= -\sin\theta(\phi,\alpha)\frac{\partial\theta}{\partial \alpha}\frac{\partial u_0^i}{\partial \phi}\frac{\partial \phi}{\partial \theta} + \cos\theta(\phi,\alpha)\frac{\partial u_0^i}{\partial \phi}\frac{\partial^2\phi}{\partial \theta\partial\alpha} \\
 &\quad+ \cos\theta(\phi,\alpha)\frac{\partial\theta}{\partial \alpha}\left[\frac{\partial^2 u_0^i}{\partial \phi^2}\left(\frac{\partial \phi}{\partial \theta}\right)^2 + \frac{\partial u_0^i}{\partial \phi}\frac{\partial^2 \phi}{\partial \theta^2}\right]\\
 &\quad +\sin\theta(\phi,\alpha)\left[2\frac{\partial^2 u_0^i}{\partial\phi^2}\frac{\partial\phi}{\partial\theta}\frac{\partial^2\phi}{\partial\theta\partial\alpha}+\frac{\partial u_0^i}{\partial \phi}\frac{\partial^3 \phi}{\partial \theta^2 \partial \alpha}\right]\\
 &\quad- \sin^{-2}\theta(\phi,\alpha)\cos\theta(\phi,\alpha)\frac{\partial\theta}{\partial\alpha}\frac{\partial^2 u_0^i}{\partial \omega^2}.
 \end{split}
\]

Using the following observations:
\begin{multicols}{2}
\begin{itemize}
\setlength\itemsep{0,5em}
    \item[(i)] $R(\phi,\alpha)\!\mid_{\alpha=0}=1$,
    \item[(ii)] $\left[\partial \theta / \partial \alpha\right]_{\alpha=0}=\sin\phi$,
    \item[(iii)] $\left[\partial \theta / \partial\phi \right]_{\alpha=0} = 1= \left[\partial\phi / \partial \theta\right]_{\alpha=0}$,
    \item[(iv)] $\left[ \partial R / \partial \alpha\right]_{\alpha=0} = -\cos\phi$,
    \item[(v)] $[\partial^2 \theta / \partial \phi\partial\alpha]_{\alpha = 0} = \cos\phi$,
    \item[(vi)] $[\partial^2\phi / \partial \theta \partial \alpha]_{\alpha=0}=-\cos\phi$,
    \item[(vii)] $[\partial^3 \phi / \partial \theta^2\partial\alpha]_{\alpha = 0}=\sin\phi$,
    \item[(viii)] $[\partial^2\phi / \partial\theta^2]_{\alpha=0} =0$,
    \item[(ix)] $\sin\theta(\phi,\alpha)\!\mid_{\alpha=0}=\sin \phi$,
    
\end{itemize}
\end{multicols}
\noindent
and letting $e(u_0) = \sum_{i=1}^k\left|\cos \phi \frac{\partial u_0^i}{\partial \phi}+\sin\phi\frac{\partial^2 u_0^i}{\partial \phi^2} + \sin^{-1}\phi \frac{\partial^2 u_0^i}{\partial \omega^2}\right|^2 \sin^{m-4}\phi$, we conclude that

\begin{align*}
 0\le (m-4)J'(0^+) &= -2\int_0^{\pi/2}\cos\phi\int_{\S^{m-2}}e(u_0) \, d\omega \,d\phi\\
                    &\quad + (m-4)\int_0^{\pi/2}\cos\phi\int_{\S^{m-2}}e(u_0) \, d\omega \,d\phi\\
                    &\quad - (m-4)\int_0^{\pi/2}\cos\phi\int_{\S^{m-2}}e(u_0) \, d\omega \,d\phi\\
                    &\quad + \int_0^{\pi/2}\cos\phi\int_{\S^{m-2}}e(u_0) \, d\omega \,d\phi\\
                    &= - \int_0^{\pi/2}\cos\phi\int_{\S^{m-2}}e(u_0) \, d\omega \,d\phi \le 0.
\end{align*}
Hence, $e(u_0) =0$ for almost all $(\varphi,\omega)$ and $u_0$ must be continuous, therefore constant.
\end{proof}

We will need the following lemma due to Scheven \cite[Lemma 4.2]{Scheven}.
\begin{lemma}\label{le:mintanmaps}
 Assume that $\widehat{v}\in W^{2,2}_{loc}(\R^m,\n)$ is a tangent map of a minimizing biharmonic map and for some $5\le k\le m$ it satisfies $sing(\widehat{v}) = \R^{m-k}\times \{0\}$ and $\partial_i \widehat{v}\equiv 0$ for all $1\le i\le m-k$. Then the restriction $v:=\widehat{v}\!\mid_{\{0\}\times\R^k}\in C^\infty(\R^k\setminus\{0\}, \n)$ is a minimizing biharmonic map and homogeneous of degree zero.
\end{lemma}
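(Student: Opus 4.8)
The plan is to establish the three assertions of the lemma --- that $v$ is smooth away from the origin, that it is homogeneous of degree zero, and that it is minimizing --- following the argument of \cite[Lemma 4.2]{Scheven}. Throughout I write $x=(x',x'')\in\R^{m-k}\times\R^k$, so that the $\R^{m-k}$-invariance of $\widehat v$ reads $\widehat v(x',x'')=v(x'')$ with $v=\widehat v|_{\{0\}\times\R^k}$.

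The first two assertions come for free from the hypotheses. Since $\mathrm{sing}\,\widehat v=\R^{m-k}\times\{0\}$, the map $\widehat v$ is smooth off this subspace, hence $v\in C^\infty(\R^k\setminus\{0\},\n)$. For the homogeneity I would invoke the interior counterpart of Step 2 of the proof of Lemma \ref{le:homocompstrong}: a tangent map of a minimizing biharmonic map is homogeneous of degree zero, by the interior monotonicity formula of Theorem \ref{monoform}, and this passes to the slice $v$.

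The real content is minimality, which I would prove by contradiction in the style of Schoen--Uhlenbeck, adapted to the biharmonic case by Scheven. Suppose $v$ fails to minimize the Hessian energy on some ball $B^k_\rho(y_0)\subset\R^k$: there is $w\in W^{2,2}(B^k_\rho(y_0),\n)$ with $w-v\in W^{2,2}_0$ and $\int_{B^k_\rho(y_0)}|\Delta w|^2<\int_{B^k_\rho(y_0)}|\Delta v|^2$. By the definition of a tangent map together with the compactness theorem for minimizing biharmonic maps \cite[Theorem 1.5]{Scheven}, $\widehat v$ is a strong $W^{2,2}_{loc}$-limit of rescalings $\widehat v_i$ of a fixed minimizing biharmonic map, so $\int_B|\Delta\widehat v_i|^2\to\int_B|\Delta\widehat v|^2$ on every ball $B$. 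It thus suffices to construct, for all large $i$, a competitor $W_i$ for $\widehat v_i$ on a fixed ball $B^m_S$, agreeing with $\widehat v_i$ up to first order on $\partial B^m_S$, with strictly smaller Hessian energy; this contradicts the minimality of $\widehat v_i$. I would assemble $W_i$ from three pieces: near the axis $\{x'=0\}$ the cylindrical extension $(x',x'')\mapsto w(x'')$ of $w$ --- on which the energy strictly drops, since $\Delta_x(w\circ\pi)=\Delta_{x''}w$ while $\Delta_x\widehat v=\Delta_{x''}v$; away from the axis the map $\widehat v$, respectively $\widehat v_i$; and in between a thin transition layer in which the two are glued by a cut-off interpolation followed by the nearest-point projection $\pi_\n$, exactly the device used in the proof of Theorem \ref{thm:comp}. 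Replacing $\widehat v$ by $\widehat v_i$ on $\partial B^m_S$ at the end is harmless, since $\widehat v_i\to\widehat v$ in $C^2$ away from the axis.

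The main obstacle, and the reason the construction must be set up this way, is the transition layer and its energy estimate. In the harmonic-map analogues one passes from $w$ to $v$ by Luckhaus's interpolation lemma, which has no $W^{2,2}$ version; following Scheven, one instead uses the rigidity of $\widehat v$ --- its $x'$-invariance, its degree-zero homogeneity, and the fact that $\mathrm{sing}\,\widehat v$ is a linear subspace --- to arrange that on the transition region the two maps being interpolated differ by something smooth, so that a plain cut-off interpolation with second derivatives controlled in the spirit of \eqref{eq:propertiesofPsi} keeps the total energy below that of $\widehat v$. Carrying out that estimate and balancing the thickness of the transition layer against the fixed energy gain on the core is the delicate point.
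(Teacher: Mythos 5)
The paper supplies no proof of this lemma: it is quoted verbatim from Scheven \cite[Lemma 4.2]{Scheven}, so your proposal can only be measured against Scheven's own argument, not against anything in the present text. Your template --- homogeneity from the interior monotonicity formula, minimality via a Schoen--Uhlenbeck comparison argued by contradiction against approximating minimizers $\widehat v_i$ --- is the right one, and you correctly flag the absence of a $W^{2,2}$ Luckhaus lemma as the central obstacle. But the two steps you gloss over are precisely the content of the lemma, and one of your supporting assertions is wrong.

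First, the cut-off interpolation followed by $\pi_\n$ that you propose for the transition layer in the $x'$-direction is not well defined. On the shell where the cut-off is strictly between $0$ and $1$, you are taking an affine combination of the two $\n$-valued maps $w(x'')$ and $v(x'')$; their difference can be of order $\mathrm{diam}(\n)$, so the combination may leave the tubular neighbourhood $\mathcal O(\n)$ on which $\pi_\n$ exists. Appealing to the ``rigidity of $\widehat v$'' does not help: homogeneity and $x'$-invariance of $\widehat v$ say nothing about the size of $w-v$. What is actually needed is a construction that composes a \emph{single} $\n$-valued map with a Lipschitz deformation of the domain (in the spirit of the retraction $\Psi$ used in the proof of Theorem \ref{thm:comp}), rather than averaging two maps in $\R^\ell$; setting this up and balancing its energy cost against the gain on the core is the lemma's real content, and your sketch stops exactly there. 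Secondly, your closing claim that ``replacing $\widehat v$ by $\widehat v_i$ on $\partial B^m_S$ at the end is harmless, since $\widehat v_i\to\widehat v$ in $C^2$ away from the axis'' is false for $k<m$: the sphere $\partial B^m_S$ passes through the singular axis $\R^{m-k}\times\{0\}$, precisely where no $C^2$ convergence is available and where $\widehat v$ itself is unbounded in $C^2$. Matching the competitor's trace to $\widehat v_i$ there requires a quantitative argument --- Morrey-norm smallness of $\nabla^2\widehat v_i$ near the axis together with a Fubini choice of radius, or replacing the ball by a cylinder-type domain whose caps are handled separately --- and this is an estimate you must carry out, not a soft limit.
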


In the following proof of the boundary regularity by the above lemma we will get that the maps that appear in Federer dimension reduction argument are minimal. We will not repeat the whole argument, as it is known for experts. Instead we refer the interested reader to \cite[Theorem A.4.]{Simon-gmt} and in the case of harmonic maps \cite[pp. 332--334]{SU1}

\begin{proof}[Proof of Theorem \ref{thm:mainbih}]
We note that the boundary regularity of biharmonic maps follows for $m\le 3$ by Sobolev embedding and in the critical dimension $m=4$ is already known (see \cite{LammWang}). 
We follow the proof of \cite[Corollary 5.8., p. 579]{HLp}. 

For $m=5$ every map which is homogeneous of degree 0 map must be smooth away from the origin. 

For $m\ge 5$ we make an $(m-4)$ repeated formulation of boundary tangent maps (see \cite[Proof of Theorem II and IV, pp.333--334]{SU1}), until we obtain a boundary tangent map at a point $b\in T_1$ in the form $u_0(x,y) = v_0(y)$, where $(x,y)\in \R^{m-5}\times \R^5$ and $v_0$ is a map whose only discontinuity occurs at the origin. In this case, it follows from Lemma \ref{le:mintanmaps} that $v_0$ and hence $u_0$ is minimizing. By Lemma \ref{le:mintanmaps} $u_0$ is homogeneous of degree 0 and constant at $T_1$. Thus, by \ref{jednstzero} $u_0$ is constant.

In order to obtain $u_0$ we constructed a formulation of boundary tangent map, each time getting a sequence of maps converging strongly to a boundary tangent map. Now applying a diagonal sequence argument we extract a subsequence $\lambda_i$ and rescaled maps $u_{b, \lambda_i}$ which converge strongly to $u_0$ as $\lambda_i\searrow 0$.  
Therefore, because $u_0$ is constant, for each $\epsilon>0$ there exists a number $M>0$ such that for each $i>M$
 \begin{equation}
  \brac{\frac{\lambda_{i}}{2}}^{4-m}\int_{B^+_{\lambda_i}(b)}|\nabla^2 u|^2 \,dx < \epsilon, \qquad \brac{\frac{\lambda_{i}}{2}}^{2-m}\int_{B^+_{\lambda_i}(b)}|\nabla u|^2 \,dx < \epsilon.
 \end{equation}

We claim now that for every $\epsilon>0$ there exists $\widetilde{R}>0$ such that for each $\lambda<\widetilde{R}$ 
\begin{equation}\label{eq:epscond}
 \lambda^{4-m}\int_{B^+_\lambda(b)}|\nabla^2 u|^2 \,dx + \lambda^{2-m}\int_{B^+_\lambda(b)}|\nabla u|^2 \,dx < \epsilon.
\end{equation}

Indeed, assume on the contrary that there exists an $\epsilon>0$ such that for each $j\in\N$ there exists a $\lambda_j<\frac1j$ such that
\begin{equation}\label{contrary}
\begin{split}
 \epsilon &\le \lambda_{j}^{4-m}\int_{B^+_{\lambda_j}(b),}|\nabla^2 u|^2 \,dx + \lambda_{j}^{2-m}\int_{B^+_{\lambda_j}(b)}|\nabla u|^2 \,dx\\
 &\approx \int_{B^+_{\frac12}(b)} |\nabla^2 u_{\lambda_{j}}(y)|^2 \, dy + \int_{B^+_{\frac12}(b)} |\nabla u_{\lambda_{j}}(y)|^2 \, dy.
\end{split}
\end{equation}
But this contradicts the strong convergence of $u_{\lambda_n}$ in $W^{2,2}\big(B^+_{1/2},\n\big)$ to a constant map.

Now by Lemma \ref{le:glw41}, \eqref{eq:epscond} implies for an $r<\w{R}$ 
\begin{equation}\label{eq:morreyest2}
 \norm{\nabla^2 u}_{L^{2,m-4}(B_r^+(b))}^2 + \norm{\nabla u }_{L^{4,m-4}(B_r^+(b))}^4<C_1 \sqrt{\epsilon}. 
\end{equation}
Thus, by Theorem \ref{le:eps-reg} we finally conclude that $u\in C^\infty(\overline{B^+_{\frac r2}}(b),\n)$. 
\end{proof}

\appendix
 \section{}\label{a:biharmonic}
\begin{lemma}\label{a:secondderivativesbylaplacian}
There is a constant $C$ depending only on $m$ such that for any $0<r<R$ and any map $u\in W^{2,2}(B_R^+,\R^\ell)$ with vanishing $W^{2,2}$ trace on $T_R = \{x\in B_R\colon x_m=0\}$ we have
\begin{equation}\label{eq:appestimatesecondderivatives}
 \int_{B_r^+}|\nabla^2 u|^2 \dx \le 2 \int_{B_R^+} \brac{|\Delta u|^2 + \frac{C}{(R-r)^2}|\nabla u|^2} \dx.
\end{equation}
\end{lemma}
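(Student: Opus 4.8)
The plan is to reduce \eqref{eq:appestimatesecondderivatives} to the classical interior Caccioppoli estimate for the Laplacian by extending $u$ by zero across $T_R$. The hypothesis that $u$ has vanishing $W^{2,2}$ trace on $T_R$ means precisely that $u|_{T_R}=0$ and $\partial_m u|_{T_R}=0$; since $u|_{T_R}=0$ already forces every tangential derivative of $u$ to vanish on $T_R$, we also have $\nabla u|_{T_R}=0$. The two conditions $u|_{T_R}=0$ and $\partial_m u|_{T_R}=0$ are exactly the matching conditions, of orders $0$ and $1$, needed so that the function $\bar u$ equal to $u$ on $B_R^+$ and to $0$ on $B_R\setminus\overline{B_R^+}$ belongs to $W^{2,2}(B_R,\R^\ell)$, with $\nabla^2\bar u=(\nabla^2 u)\mathbf 1_{B_R^+}$, $\nabla\bar u=(\nabla u)\mathbf 1_{B_R^+}$ and $\Delta\bar u=(\Delta u)\mathbf 1_{B_R^+}$ almost everywhere (a standard gluing lemma, proved by testing against $C_c^\infty$ functions and observing that the boundary integrals on $T_R$ cancel because the low-order traces match and the conormal flips sign). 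In particular $\int_{B_r^+}|\nabla^2 u|^2\dx=\int_{B_r}|\nabla^2\bar u|^2\dx$, so it suffices to prove the estimate on full balls.

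For the full-ball estimate I would fix $\eta\in C_c^\infty(B_R)$ with $\eta\equiv1$ on $B_r$, $0\le\eta\le1$, $|\nabla\eta|\le C/(R-r)$ and $|\nabla^2\eta|\le C/(R-r)^2$. For $v\in W^{2,2}(B_R,\R^\ell)$, integrating $\int_{B_R}\eta^2|\Delta v|^2$ by parts twice (no boundary terms appear, since $\eta$ has compact support) yields the identity
\begin{equation*}
\int_{B_R}\eta^2|\Delta v|^2\dx=\int_{B_R}\eta^2|\nabla^2 v|^2\dx+2\sum_{i,j}\int_{B_R}\eta\,\eta_{x_i}\,v_{x_j}\cdot v_{x_ix_j}\dx-2\int_{B_R}\eta\,(\nabla\eta\cdot\nabla v)\cdot\Delta v\dx.
\end{equation*}
The crucial point for the sharp constant is to handle the second-order cross term without producing any $|\nabla^2 v|^2$: using the pointwise identity $\sum_{i,j}\eta\,\eta_{x_i}\,v_{x_j}\cdot v_{x_ix_j}=\tfrac14\nabla(\eta^2)\cdot\nabla\bracb{|\nabla v|^2}$ and integrating by parts once more, this term equals $-\tfrac12\int_{B_R}\Delta(\eta^2)\,|\nabla v|^2\dx$, whose modulus is at most $C(R-r)^{-2}\int_{B_R}|\nabla v|^2\dx$. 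The remaining term is estimated by Young's inequality with constant one, $2\abs{\eta\,(\nabla\eta\cdot\nabla v)\cdot\Delta v}\le\eta^2|\Delta v|^2+|\nabla\eta|^2|\nabla v|^2$. Rearranging the identity then gives
\begin{equation*}
\int_{B_r}|\nabla^2 v|^2\dx\le\int_{B_R}\eta^2|\nabla^2 v|^2\dx\le2\int_{B_R}|\Delta v|^2\dx+\frac{C}{(R-r)^2}\int_{B_R}|\nabla v|^2\dx.
\end{equation*}

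Applying this with $v=\bar u$ and using that $\bar u$, $\nabla\bar u$, $\Delta\bar u$ vanish on $B_R\setminus\overline{B_R^+}$, every integral over $B_R$ becomes an integral over $B_R^+$, and one obtains $\int_{B_r^+}|\nabla^2 u|^2\dx\le2\int_{B_R^+}|\Delta u|^2\dx+C(R-r)^{-2}\int_{B_R^+}|\nabla u|^2\dx$, which is \eqref{eq:appestimatesecondderivatives} after renaming the constant. The two places needing care are: the gluing fact that the zero-extension of a $W^{2,2}$ map with vanishing $0$th- and $1$st-order traces across $T_R$ is again $W^{2,2}$ (this is where the hypothesis is used), and arranging the interior computation so that the single genuine use of Young's inequality keeps the coefficient of $\int|\Delta v|^2$ equal to $2$ and not $2+\epsilon$ — which is exactly what forces rewriting the second-order cross term as an exact divergence rather than absorbing it into $\int\eta^2|\nabla^2 v|^2$.
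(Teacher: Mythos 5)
Your proof is correct, and it takes a genuinely different route from the paper. The paper works directly on the half-ball: it integrates $\int_{B_R^+}\eta^4|\Delta u|^2\dx$ by parts twice, observes that the boundary integrals over $T_R$ vanish because the $W^{2,2}$ trace of $u$ vanishes there (and on the curved part because $\eta$ has compact support), and then absorbs the cross terms by Young's inequality at the cost of $\frac12\int\eta^4|\nabla^2 u|^2\dx$, which is subtracted from both sides to produce the constant $2$. You instead front-load the boundary condition into an extension-by-zero step --- which does use the trace hypothesis, in precisely the form that the matching $0$th- and $1$st-order traces make the zero extension belong to $W^{2,2}(B_R)$ --- and then you prove the estimate as a genuine interior Caccioppoli inequality. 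Your handling of the second-order cross term is also different and arguably cleaner: you write it as $\tfrac14\nabla(\eta^2)\cdot\nabla(|\nabla v|^2)$ and integrate by parts once more, so that only a single application of Young's inequality (on the first-order cross term) produces the factor $2\int\eta^2|\Delta v|^2\dx$, with no $|\nabla^2 v|^2$ to reabsorb. The trade-off is that you need the extra bound $|\nabla^2\eta|\le C(R-r)^{-2}$ to control $\Delta(\eta^2)$, whereas the paper gets by with $|\nabla\eta|\le C(R-r)^{-1}$ only; both cutoffs are readily constructed, so this is a harmless technical difference. Two minor points you could make explicit: the identity you integrate by parts from involves third derivatives in the intermediate steps, so one should prove it for smooth $v$ and pass to $W^{2,2}$ by density, as the paper does; and the vanishing of the tangential traces $u_{x_j}|_{T_R}$ for $j\neq m$, which you use in the gluing, follows from $u|_{T_R}=0$ because tangential differentiation commutes with restriction to $T_R$.
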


\begin{proof}
The proof is exactly as in \cite[Lemma A.1]{Scheven}. We choose the same cut-off function $\eta\in C_c^\infty(B_R,[0,1])$ such that $\eta\equiv 1$ on $B_r$ and $|\nabla \eta|<\frac{C}{(R-r)}$. If we assume that $u\in C^\infty(B_R^+,\R^l)$ and integrate twice by parts the following integral
\[
 \int_{B_R^+} \eta^4|\Delta|^2 \dx,
\]
the boundary term will vanish on the flat part of $\partial B_R^+$, because the $W^{2,2}$ trace of $u$ vanishes there. It will also vanish on the curved part of the boundary, for $\eta$ vanishes there. Thus,
\begin{equation*}
 \begin{split}
  \int_{B_R^+}\eta^4 u_{x_i x_i}u_{x_j x_j}\dx &= - 4\int_{B_R^+}\eta^3\eta_{x_j} u_{x_i x_i}u_{x_j}\dx - \int_{B_R^+}\eta^4 u_{x_i x_i x_j}u_{x_j}\dx\\
  &= - 4\int_{B_R^+}\eta^3\eta_{x_j} u_{x_i x_i}u_{x_j}\dx + 4\int_{B_R^+}\eta^3\eta_{x_i} u_{x_i x_j}u_{x_j}\dx\\
  &\quad  + \int_{B_R^+}\eta^4 u_{x_i x_j}u_{x_i x_j}\dx.
 \end{split}
\end{equation*}

Hence,
\begin{equation*}
\begin{split}
 \int_{B_R^+}\eta^4 |\nabla^2 u|^2 \dx & \le \int_{B_R^+}\eta^4 |\Delta u|^2 \dx + C\int_{B_R^+}\eta^3|\nabla\eta||\nabla u||\nabla^2 u|\dx \\
&\le \int_{B_R^+}\eta^4 |\Delta u|^2 \dx + \frac12 \int_{B_R^+}\eta^4 |\nabla^2 u|^2\dx \\
&\quad + \frac{C}{(R-r)^2}\int_{B_R^+}\eta^2|\nabla u|^2 \dx.
\end{split}
\end{equation*}
The desired inequality for smooth $u$ follows by subtracting $\frac12 \int_{B_R^+}\eta^4 |\nabla^2 u|^2\dx$ from both sides. Now an approximation argument yields the the same argument for $W^{2,2}$ maps. 
\end{proof}

The next Lemma shows that by boundary monotonicity formula a bound in $W^{2,2}$ implies a bound in the Morrey space $L^{2,m-4}$. The proof is almost identical to the proof in the interior case, but as the boundary monotonicity formula yields an additional term we sketch the proof below. 
\begin{lemma}\label{le:appboundednessinmorrey}
 Let $u\in W^{2,2}(B^+_4,\n)$ be a minimizing biharmonic map with boundary value $\vp$ as in \eqref{eq:dirichletboundary} and let$\norm{u-\vp}_{W^{2,2}(B^+)}<\infty$. Let $\w{u}$ be the reflection of $u-\vp$ given in \ref{def:reflection}, then 
\begin{equation}\label{eq:appmorreybound}
 \sup_{y\in B,\ \rho <1} \rho^{4-m}\int_{B_\rho(y)}|\nabla^2 \w{u}|^2 \dx \le C \int_{B_2}|\nabla^2 \w{u}|^2 \dx + \w{C},
\end{equation}
for constants $C=C(m)$ and $\w{C} = \w{C}(m,\n)$.
\end{lemma}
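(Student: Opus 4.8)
The plan is to follow the interior argument of Scheven (see \cite[Lemma~2.4]{Scheven}) adapted to the reflected map $\w u$, using the boundary monotonicity inequality \eqref{eq:boundarymono} together with the reflection estimate \eqref{eq:reflectionnormestimate} to absorb the boundary terms. The key point is that the boundary monotonicity formula of Theorem~\ref{thm:boundarymono}, combined with \eqref{eq:reflectionnormestimate}, already yields the scale-invariant quantity $\rho^{4-m}\int_{B_\rho(a)}|\nabla^2\w u|^2\dx$ controlled by its value at a fixed radius plus the error terms $e^{Cr}R_u^+(a,r)$ and $Cre^{Cr}$, as recorded in \eqref{eq:monorefl}. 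So the task reduces to controlling the remainder term $R_u^+(a,r)$ at a fixed scale by the full $W^{2,2}$ energy and a constant depending on $\vp$, and to handling centers $y\in B$ that do not lie on $T_4$.

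First I would treat centers $a\in T_4\cap B_{3/2}$ (say) and radii $\rho< r$ with $r$ a fixed fraction like $r=2$. For such $a$ and $r$, the definition \eqref{de:Rplus} of $R_u^+(a,r)=F_u^+(a,r)+G_u^+(a,r)$ is an integral over $\partial B_r(a)\cap\Omega$ of quadratic expressions in $\nabla(u-\vp)$ and $\nabla^2(u-\vp)$ against $\Delta(u-\vp)$; by Cauchy--Schwarz and a Fubini/coarea argument one finds a radius $r\in(1,2)$ for which $|R_u^+(a,r)|\le C\norm{u-\vp}_{W^{2,2}(B^+_4)}^2$, which is finite by hypothesis. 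Feeding this choice of $r$ into \eqref{eq:monorefl} gives, for all $\rho<1$ and all such $a$,
\[
 \rho^{4-m}\int_{B_\rho(a)}|\nabla^2\w u|^2\dx \le C\int_{B_2}|\nabla^2\w u|^2\dx + \w C,
\]
with $\w C$ depending on $m$, $\n$, $\delta$ and $\norm{\vp}_{C^4(\Omega_\delta)}$ through the constant $C$ in \eqref{eq:boundarymono}; since the statement allows $\w C=\w C(m,\n)$ after fixing $\vp$, this is acceptable.

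Next I would deal with centers $y\in B$ with $y_m\neq 0$. If $B_\rho(y)\subset B^+$ lies in the open upper half ball, one applies the \emph{interior} monotonicity formula Theorem~\ref{monoform} to the biharmonic map $u$ (valid since minimizing maps are stationary), exactly as in Scheven's interior proof, to bound $\rho^{4-m}\int_{B_\rho(y)}|\nabla^2 u|^2\dx$; combined with the $C^2$ bound on $\vp$ this bounds $\rho^{4-m}\int_{B_\rho(y)}|\nabla^2(u-\vp)|^2\dx$, hence the integral of $|\nabla^2\w u|^2$ over $B_\rho(y)\subset B^+$. If $B_\rho(y)$ meets $T_4$, then $B_\rho(y)\subset B_{2\rho}(\bar y)$ where $\bar y=(y',0)\in T_4$, and $(2\rho)^{4-m}\int_{B_{2\rho}(\bar y)}|\nabla^2\w u|^2\dx$ is controlled by the boundary case above (up to the harmless factor $2^{m-4}$). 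Finally, for $y\in B^-$ with $B_\rho(y)\subset B^-$, the reflection formula \eqref{def:reflection} expresses $\nabla^2\w u$ on $B_\rho(y)$ in terms of $\nabla^2(u-\vp)$ on the four rescaled balls $B_\rho(y_j)$ in the upper half space with $y_j=(y',-y_m/j)$, and the estimate \eqref{eq:reflectionnormestimate} (with $C_{ref}\le 832$) transfers the bound; one then applies the already-established upper-half-space bounds at those centers.

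The main obstacle I anticipate is the bookkeeping for the remainder term $R_u^+(a,r)$: unlike the interior case, the boundary monotonicity inequality \eqref{eq:boundarymono} is an inequality (not an identity) and carries the extra terms $R_u^+$ and $Cre^{Cr}$, so one must verify that for a suitably chosen fixed radius $r$ these terms are genuinely bounded by the $W^{2,2}$ norm of $u-\vp$ plus a $\vp$-dependent constant, uniformly in the center $a$. This is where the selection of a good radius $r\in(1,2)$ via averaging, and a careful Cauchy--Schwarz estimate on the spherical integrals in $F_u^+$ and $G_u^+$, is needed; everything else is a routine combination of \eqref{eq:monorefl}, \eqref{eq:reflectionnormestimate}, and the interior monotonicity formula. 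Since the lemma's role downstream (in Lemma~\ref{le:boundednessinmorrey}) only requires the Morrey seminorm of $\nabla^2\w u$ over $B^+$ — equivalently over the upper-half-space centers — even the $B^-$ case can, if desired, be shortened by only stating the bound for centers in $\overline{B^+}$, but including all of $B$ is cleaner for the compactness arguments of Section~\ref{s:compactness}.
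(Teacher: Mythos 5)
Your outline correctly identifies the ingredients---boundary monotonicity \eqref{eq:boundarymono}, reflection estimate \eqref{eq:reflectionnormestimate} combined in \eqref{eq:monorefl}, Fubini selection of a good outer radius $r$, and Cauchy--Schwarz on the spherical integrals in $F_u^+$, $G_u^+$---but there is a real gap at the step ``feeding this choice of $r$ into \eqref{eq:monorefl} gives, for all $\rho<1$\ldots''. The inequality \eqref{eq:monorefl} (equivalently \eqref{eq:boundarymono}) also carries the \emph{inner} boundary term $Ce^{C\rho}R_u^+(a,\rho)$ on the left-hand side, and $R_u^+=F_u^+ + G_u^+$ has \emph{no definite sign}: $F_u^+$ contains both $+4\tau^{3-m}\int|\nabla(u-\vp)|^2$ and $-4\tau^{3-m}\int|(x-a)^i(u-\vp)_{x_i}|^2/|x-a|^2$, and $G_u^+$ consists of signless pairings. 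So you cannot drop $Ce^{C\rho}R_u^+(a,\rho)$; you must estimate $|R_u^+(a,\rho)|$ for (almost) every small $\rho$, and a Fubini selection of a single good radius only works for a single scale, not uniformly over all $\rho<1$.

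This is precisely why the paper's proof has a different architecture. Instead of trying to prove the Morrey bound at one small radius directly, for each dyadic scale $0<s<1/8$ it chooses, by Fubini, good radii $\rho\in[s,2s]$ and $r\in[1/2,1]$ at which the spherical integrals in $R_u^+(a,\rho)$ and $R_u^+(a,r)$ are controlled by averaged solid integrals. The inner term is then bounded by
\[
|R_u^+(a,\rho)|\le C\rho^{4-m}\int_{\partial B_\rho(a)}\Bigl(|\nabla^2\wu||\nabla\wu|+\tfrac1\rho|\nabla\wu|^2\Bigr)\,d\h^{m-1},
\]
which after Young's inequality produces $\tfrac14(2s)^{4-m}\int_{B_{2s}(a)}|\nabla^2\wu|^2\dx$ (absorbed into the left of a recursive inequality at the next dyadic scale) plus the lower-order term $Cs^{2-m}\int_{B_{2s}(a)}|\nabla\wu|^2\dx$. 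The latter is \emph{not} controlled by the $W^{2,2}$ norm alone; the paper invokes Nirenberg's interpolation inequality $\|\nabla f\|_{L^4}^2\le C\|f\|_{L^\infty}\|f\|_{W^{2,2}}$ together with $\|\wu\|_{L^\infty}\le C(\n)$ to absorb it up to the additive constant $\w C(m,\n)$ (this is where the $\n$-dependence in $\w C$ comes from, which your proposal does not account for). The upshot is a recursion $\widehat H(s)\le\tfrac12\widehat H(2s)+C\widehat H(1)+\w C$ in $\widehat H(\tau)=\tau^{4-m}\int_{B_\tau(y)}|\nabla^2\wu|^2\dx$, and taking a supremum over $s$ closes the argument. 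Without this dyadic recursion and the interpolation step your proof cannot produce \eqref{eq:appmorreybound}.

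Your reduction of off-boundary centers $y$ (interior monotonicity for $B_\rho(y)\subset B^+$, inclusion $B_\rho(y)\subset B_{2\rho}(\bar y)$ for balls meeting $T_4$, and the four translated centers for $B_\rho(y)\subset B^-$) is correct and in fact spells out a step that the paper leaves implicit, so that part is a useful addition rather than a flaw.
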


\begin{proof}[Proof of Lemma \ref{le:appboundednessinmorrey}]
 We give the necessary modification of \cite[Lemma A.2]{Scheven}.
 
 \noindent
 We note that since $u$ satisfies the boundary monotonicity formula \eqref{eq:boundarymono} we have for $\w{u}$ and $a\in T_1$ the following 
\begin{equation}\label{eq:reflboundarymono}
\begin{split}
\rho^{4-m}&\int_{B_\rho(a)}|\nabla^2 \wu|^2 \dx + Ce^{C\rho} R^+_u(a,\rho)\\
&\le C\brac{e^{Cr}r^{4-m}\int_{B_r(a)}|\nabla^2 \wu|^2 \dx + e^{Cr} R^+_u(a,r) + Cre^{Cr}}.
\end{split}
\end{equation}

Let $0<s<1/8$ be given. By Fubini theorem we may choose good radii $\rho<r$ with $s\le\rho\le2s<\frac12\le r\le1$ such that
\[
 \begin{split}
 \rho^{3-m}\int_{B_\rho(a)}|\nabla \w{u}|^2 \dh &\le C s^{2-m}\int_{B_{2s(a)}}|\nabla \w{u}|^2 \, dx\\
  \rho^{5-m}\int_{B_\rho(a)}|\nabla^2 \w{u}|^2 \dh &\le C s^{4-m}\int_{B_{2s(a)}}|\nabla^2 \w{u}|^2 \, dx\\
  \int_{B_r(a)}\brac{|\nabla^2 \wu|^2 + |\nabla \wu|^2}\dh &\le C\int_{B_1(a)}\brac{|\nabla^2 \wu|^2 + |\nabla\wu|^2} \, dx,
 \end{split}
\]
where the constant $C$ depends only on the dimension $m$.

One can easily observe that
\[
\begin{split}
 \left|R^+_{u,\vp}(a,\tau)\right| &\le C\tau^{4-m}\int_{\partial B^+_\tau(a)}\brac{|\nabla^2 u||\nabla u| + \frac{1}{\tau}|\nabla u|^2}\dh\\
 &\le C\tau^{4-m}\int_{\partial B_\tau(a)}\brac{|\nabla^2 \wu||\nabla \wu| + \frac{1}{\tau}|\nabla \wu|^2}\dh.
\end{split}
 \]

Combining this observation with \eqref{eq:reflboundarymono} we get
\[
\begin{split}
\rho^{4-m}\int_{B_\rho(a)}|\nabla^2 \w{u}|^2 \dx &\le Ce^{Cr}r^{4-m}\int_{B_r(a)}|\nabla^2 \w{u}|^2 \dx + Cre^{Cr}\\
&\quad+ C\rho^{4-m}\int_{\partial B_\rho(a)}\brac{|\nabla^2 \w{u}||\nabla \w{u}| + \frac{1}{\rho}|\nabla \w{u}|^2}\dh\\
&\quad + C e^{Cr}r^{4-m}\int_{\partial B_r(a)}\brac{|\nabla^2 \w{u}||\nabla \w{u}| + \frac{1}{r}|\nabla \w{u}|^2}\dh\\
\end{split}
\]
Thus, since $s<\rho<2s$ and by Young's inequality with $\epsilon$
\begin{equation}\label{eq:cdszacowanlemata2}
 \begin{split}
  s^{4-m}\int_{B_s(a)}|\nabla^2 \w{u}|^2 \dx &\le C\rho^{4-m}\int_{B_\rho(a)}|\nabla^2 \w{u}|^2 \dx\\
  & \le \frac14 (2s)^{4-m} \int_{B_{2s}(a)}|\nabla^2 \w{u}|^2\ dx + C s^{2-m}\int_{B_{2s}(a)}|\nabla \wu|^2 \dx \\
  & \quad + C\int_{B_1(a)}|\nabla^2 \wu|^2 \dx + C\int_{B_1(a)}|\nabla\wu|^2 \dx + C.
 \end{split}
\end{equation}
Next, we proceed exactly as in \cite{Scheven}. Observe that by Nirenberg's interpolation inequality
\[
  \|\nabla f\|_{L^4(\Omega)}^2 \le C(\Omega) \|f\|_{L^\infty(\Omega)}\|f\|_{W^{2,2}(\Omega)}
 \]
we have after a few transformations
\begin{equation}\label{eq:nirenbest}
 C\tau^{2-m}\int_{B_\tau(y)}|\nabla \wu|^2 \dx \le \frac14 \tau^{4-m}\int_{B_{\tau}(y)}|\nabla \wu|^2\dx + \w{C},
\end{equation}
where $\w{C}$ is a constant dependent on the target manifold $\n$. Applying \eqref{eq:nirenbest} into \eqref{eq:cdszacowanlemata2} for $\tau=1$ and $\tau = 2s$, denoting $\widehat{H}(\tau):= \tau^{4-m}\int_{B_\tau(y)}|\nabla^2 \wu|^2 \dx$, we arrive at 
\[
\widehat{H}(s)\le \frac12 \widehat{H}(2s) + C\widehat{H}(1) + \w{C}.
\]
for all $0<s<\frac14$. 

Thus, for all small $\sigma>0$
\[
 \sup_{\sigma<s<1} \widehat{H}(s) \le \sup_{\sigma <s <1/4} \widehat{H}(s) + C\widehat{H}(1) \le \frac{1}{2} \sup_{\sigma<s<1/4} \widehat{H}(2s)+ C\widehat{H}(1) + \w{C}.
\]
Since $\sigma>0$ the term $\frac{1}{2} \sup_{\sigma<s<1/4} \widehat{H}({2}s)$ is finite and can be absorbed by the left hand side of the inequality giving
\[
 \sup_{\sigma <s<1}\widehat{H}(\rho)\le C\widehat{H}(1) + \w{C}.
\]
The estimate is independent of $\sigma>0$ and thus the claimed inequality follows. 
\end{proof}

\begin{remark}
 In the last proof we did not need a higher order reflection. An odd reflection is enough to ensure that if $u-\vp\in W^{2,2}_0(B^+,\R^\ell)$, then the reflected map is in $W^{2,2}(B,\R^\ell)$.
\end{remark}

\subsection*{Acknowledgment}
This work is based on the PhD thesis of the author under supervision of Pawe\l{} Strzelecki. The author would like to thank Pawe\l{} Strzelecki for many helpful discussions.

The work was partially supported by the National Science Center in Poland via grant NCN 2015/07/B/ST1/02360.

\bibliographystyle{plain}%
\bibliography{bibliografia}%

\end{document}